\theoremstyle{plain}
\newtheorem{thm}{\protect\theoremname}
\theoremstyle{definition}
\newtheorem{defn}{\protect\definitionname}
\theoremstyle{plain}
\newtheorem{prop}{\protect\propositionname}
\theoremstyle{plain}
\newtheorem{lem}{\protect\lemmaname}
\theoremstyle{definition}
\newtheorem{problem}{\protect\problemname}
\theoremstyle{remark}
\newtheorem{claim}{\protect\claimname}
\providecommand{\definitionname}{Definition}
\providecommand{\lemmaname}{Lemma}
\providecommand{\problemname}{Problem}
\providecommand{\propositionname}{Proposition}
\providecommand{\theoremname}{Theorem}
\providecommand{\claimname}{Claim}
\providecommand{\definitionname}{Definition}
\providecommand{\lemmaname}{Lemma}
\providecommand{\problemname}{Problem}
\providecommand{\propositionname}{Proposition}
\providecommand{\theoremname}{Theorem}
\begin{document}

\title{Edge-Isoperimetric Inequalities and Ball-Noise Stability: Linear
Programming and Probabilistic Approaches}

\author{Lei Yu\footnote{This work  was supported by the NSFC grant  62101286 and the Fundamental Research Funds for the Central Universities of China (Nankai University).}}

\address{School of Statistics and Data Science, LPMC \& KLMDASR, \\
 Nankai University, \\
 Tianjin 300071, China \\
 Email: leiyu@nankai.edu.cn. }

\begin{abstract}
Let $Q_{n}^{r}$ be the graph with vertex set $\{-1,1\}^{n}$ in which
two vertices are joined if their Hamming distance is at most $r$.
The edge-isoperimetric problem for $Q_{n}^{r}$ is that: For every
$(n,r,M)$ such that $1\le r\le n$ and $1\le M\le2^{n}$, determine
the minimum edge-boundary size of a subset of vertices of $Q_{n}^{r}$
with a given size $M$. In this paper, we apply two different approaches
to prove bounds for this problem. The first approach is a linear programming
approach and the second is a probabilistic approach. Our bound derived
by the first approach generalizes the tight bound for $M=2^{n-1}$
derived by Kahn, Kalai, and Linial in 1989. Moreover, our bound is
also tight for $M=2^{n-2}$ and $r\le\frac{n}{2}-1$. Our bounds derived
by the second approach are expressed in terms of the \emph{noise stability},
and they are shown to be asymptotically tight as $n\to\infty$ when $r=2\lfloor\frac{\beta n}{2}\rfloor+1$
and $M=\lfloor\alpha2^{n}\rfloor$ for fixed $\alpha,\beta\in(0,1)$,
and is tight up to a factor $2$ when $r=2\lfloor\frac{\beta n}{2}\rfloor$
and $M=\lfloor\alpha2^{n}\rfloor$. In fact, the edge-isoperimetric
problem is equivalent to a ball-noise stability problem which is a
variant of the traditional (i.i.d.-) noise stability problem. Our
results can be interpreted as bounds for the ball-noise stability
problem. 
\end{abstract}
\begin{keyword}
Isoperimetric inequalities, noise stability, Fourier analysis, linear
programming bound, probabilistic approach, hypercontractivity 
\end{keyword}
\maketitle

\section{\label{sec:Introduction}Introduction}

The isoperimetric problem is one of most classic problems, which is
to determine the minimum possible boundary-size (i.e., perimeter)
of a set with a fixed size (i.e., volume). A famous result for the
isoperimetric problem in the $n$-dimensional Euclidean space states
that an $n$-ball has the smallest surface area per given volume.
In last several decades, an analogue of the isoperimetric problem
was considered in the discrete setting. Let $G=(V,E)$ be a graph
and $A\subseteq V$ a non-empty subset of vertices of $G$. The edge-boundary
$\partial A$ of $A$ is the set of all edges of $G$ joining a vertex
in $A$ to a vertex in $V\backslash A$. The edge-isoperimetric problem
for $G$ asks for the determination of 
\begin{equation}
\min\{|\partial A|:A\subseteq V,|A|=M\},\label{eq:minboundary}
\end{equation}
for each integer $M$. When the graph $G$ is set to (the powers of)
discrete hypercubes, the corresponding isoperimetric problem attracts
a lot of attentions due to its importance to related problems in combinatorics,
discrete probability, computer science, social choice theory, and
others; see e.g. \cite{O'Donnell14analysisof,kahn1988influence,harper1991problem,bezrukov1999edge,benjamini1999noise}.
For the hypercube\footnote{Without loss of generality, one can also consider the hypercube as
$\{0,1\}^{n}$, and our results can be easily converted into this
case via a simple bijection $x\in\{0,1\}\mapsto(-1)^{x}$. We choose
``$\{-1,1\}^{n}$'' since the Fourier transform of a function on
this set is easier to present. } $\{-1,1\}^{n}$, the Hamming distance $d_{\mathrm{H}}(\mathbf{x},\mathbf{y}):=|\{i:\:x_{i}\neq y{}_{i}\}|$
between two vectors $\mathbf{x}$ and $\mathbf{y}$ in $\{-1,1\}^{n}$
is defined as the number of coordinates in which they differ. For
positive integers $n$ and $r$ such that $r\le n$, we let $Q_{n}^{r}$
denote the $r$-th power of the $n$-dimensional discrete hypercube
graph, i.e., the graph with vertex-set $\{-1,1\}^{n}$ in which two
vectors are joined if they are Hamming distance at most $r$ apart.
When $r=1$, the hypercube graph $Q_{n}^{r}$ is denoted as $Q_{n}$
for brevity. The edge-isoperimetric problem for $Q_{n}^{r}$ is hence
formulated as follows. For every $(n,r,M)$ such that\footnote{Throughout this paper, we denote $[a:b]$ for $a,b\in\mathbb{R}$
such that $a\le b$ as the set of integers between $a$ and $b$ (i.e.,
$[a,b]\cap\mathbb{Z}$). .} $r\in[1:n]$ and $M\in[1:2^{n}]$, determine the minimum boundary-size
of a subset (also termed a \emph{code}) of $Q_{n}^{r}$ with a given
size $M$. Throughout this paper, we denote the (normalized) volume
as 
\begin{align*}
\alpha & :=\frac{M}{2^{n}}\textrm{ and }\beta:=\frac{r}{n}.
\end{align*}
The edge-isoperimetric problem is also related to the estimate of
distance distribution of a subset in the hypercube $Q_{n}$. For a
graph $G=(V,E)$ and a non-empty subset $A\subseteq V$, the subgraph
induced by $A$ is denoted as $G[A]$, which is the graph whose vertex
set is $A$ and whose edge set consists of all of the edges in $E$
that have both endpoints in $A$. Let $e(A)$ denote the number of
edges of $G[A]$. Indeed, if $G$ is a $d$-regular graph, then $2e(A)+|\partial A|=d|A|$
for all $A\subseteq V$. Denote $B_{r}^{(n)}:=\{\mathbf{x}:d_{\mathrm{H}}(\mathbf{x},\mathbf{1})\le r\}$
(or shortly $B_{r}$) as the $r$-radius ball with center $\mathbf{1}=\{1,1,...,1\}$.
Denote the cardinality of $B_{r}^{(n)}$ as ${n \choose \le r}:=\sum_{i=0}^{r}{n \choose i}$.
Similarly, we denote the Hamming sphere with the same radius and center
as $S_{r}^{(n)}:=\{\mathbf{x}:d_{\mathrm{H}}(\mathbf{x},\mathbf{1})=r\}$
(or shortly $S_{r}$) and its cardinality as ${n \choose r}$. Since
$Q_{n}^{r}$ is $({n \choose \le r}-1)$-regular, it holds that for
$A\subseteq\{-1,1\}^{n}$ with size $M$, $2e(A)+|\partial A|=[{n \choose \le r}-1]M.$
Hence, the edge-isoperimetric problem is equivalent to determining
\[
\max\{e(A):A\subseteq\{-1,1\}^{n},|A|=M\}.
\]
For a non-empty subset $A\subseteq\{-1,1\}^{n}$, the \emph{distance
distribution} of $A$ is defined as the following probability mass
function\footnote{Our definition of the distance distribution is slightly different
from the classic one (e.g., in \cite{macwilliams1977theory}), since
in the classic definition, the factor is $\frac{1}{|A|}$, rather
than $\frac{1}{|A|^{2}}$. Our choice is more convenient when the
code size is large (e.g., linear in $2^{n}$). }: 
\[
P^{(A)}(i):=\frac{1}{|A|^{2}}|\{(\mathbf{x},\mathbf{x}')\in A^{2}:d_{\mathrm{H}}(\mathbf{x},\mathbf{x}')=i\}|,\;i\in[0:n].
\]
It is clear that $P^{(A)}(0)=\frac{1}{|A|}$, $\sum_{i=0}^{n}P^{(A)}(i)=1$,
and $P^{(A)}(i)\ge0$ for $i\in[0:n]$. Furthermore, by definition,
if $|A|=M$ then $e(A)=\frac{M^{2}}{2}\sum_{i=1}^{r}P^{(A)}(i).$
Hence, the edge-isoperimetric problem is also equivalent to determining
\[
\max\{\sum_{i=0}^{r}P^{(A)}(i):A\subseteq\{-1,1\}^{n},|A|=M\},
\]
i.e., the estimation of the cumulative distribution function of the
distance distribution.

A trivial upper bound for the edge-isoperimetric problem is $\sum_{i=0}^{r}P^{(A)}(i)\le1$,
which is attained if $M$ is small enough, more precisely, if $M\le{n \choose \le b/2}$
(i.e., the optimal sets $A$ are contained in a Hamming ball of radius
$b/2$) \cite{kleitman1966combinatorial,kahn1988influence}. For
$r=1$ (i.e., for $Q_{n}$), the edge-isoperimetric problem was solved
by Harper \cite{harper1964optimal}, Lindsey \cite{lindsey1964assignment},
Bernstein \cite{bernstein1967maximally} and Hart \cite{hart1976note},
who showed that \emph{lexicographic subsets} are optimal in minimizing
the edge-boundary size. Here, lexicographic subsets are subsets whose
elements are given by initial segments of the lexicographic ordering
on $\{-1,1\}^{n}$. In fact, lexicographic subsets are generalizations
of subcubes, and reduce to subcubes when the sizes of them are $2^{k}$
for integers $k$. Furthermore, for $r\ge2$ and $M=2^{n-1}$, the
edge-isoperimetric problem was solved by Kahn, Kalai, and Linial \cite{kahn1988influence}
in 1989, who showed that subcubes are also optimal for this case.
However, the problem for $r\ge2$ and $M\neq2^{n-1}$ has remained
open. In this paper, we make progress on these unsolved cases, more
specifically, on the cases of $r\ge2$ and $M=\alpha2^{n}$ with $\alpha\in(0,\frac{1}{2})$.
In other words, the case in which $M$ is linear in $2^{n}$ is considered
in this paper. For this case, we provide two bounds for the edge-isoperimetric
problem. In particular, we prove that subcubes are also optimal for
$r\ge2$ and $M=2^{n-2}$.

When $M$ is exponential in $n$, by using an improved hypercontractivity
inequality, Kirshner and Samorodnitsky \cite{kirshner2021moment}
recently showed that for $M=2^{nH(\sigma)}$ with $\sigma\in(0,\frac{1}{2})$
and for $i=n\lambda$ with $\lambda\le2\sigma(1-\sigma)$, 
\begin{equation}
P^{(A)}(i)\leq2^{n[\sigma H(\frac{\lambda}{2\sigma})+(1-\sigma)H(\frac{\lambda}{2(1-\sigma)})-H(\sigma)]},\label{eq:-12}
\end{equation}
where $H(p):=-p\log_{2}p-(1-p)\log_{2}(1-p)$ for $p\in[0,1]$ denotes
the binary entropy (here, the convention $0\log_{2}0=0$ is adopted).
By computing the derivative, one can find that given $\sigma$, the
exponent at the right side of \eqref{eq:-12} is non-decreasing in
$\lambda$ for $\lambda\le2\sigma(1-\sigma)$. Hence, the inequality
in \eqref{eq:-12} implies that for $M=2^{nH(\sigma)}$ with $\sigma\in(0,\frac{1}{2})$
and for $r=n\beta$ with $\beta\le2\sigma(1-\sigma)$, 
\begin{align}
\sum_{i=0}^{r}P^{(A)}(i) & \leq(r+1)2^{n[\sigma H(\frac{\beta}{2\sigma})+(1-\sigma)H(\frac{\beta}{2(1-\sigma)})-H(\sigma)]}.\label{eq:-1}
\end{align}
 {When $r_{n}=\lfloor\beta n\rfloor$ for a fixed $\beta\in(0,1)$
and let $n\to\infty$, it holds that 
\begin{align}
 & \lim_{n\to\infty}-\frac{1}{n}\log_{2}\max_{A:|A|\le2^{nH(\sigma)}}\sum_{i=0}^{r_{n}}P^{(A)}(i)\nonumber \\
 & \qquad=\begin{cases}
H(\sigma)-\sigma H(\frac{\beta}{2\sigma})-(1-\sigma)H(\frac{\beta}{2(1-\sigma)}), & \beta\le2\sigma(1-\sigma)\\
0, & \beta>2\sigma(1-\sigma)
\end{cases}.\label{eq:-15}
\end{align}
Here, the optimal exponent in \eqref{eq:-15} is attained by a sequence
of Hamming balls with radii (approximately) equal to $n\sigma$. The
first clause at the right side of \eqref{eq:-15} follows from \eqref{eq:-1},
and the second one follows by the following facts: 1. when $\beta=2\sigma(1-\sigma)$,
the right side of \eqref{eq:-15} vanishes; 2. given a set $A$, $r\mapsto\sum_{i=0}^{r}P^{(A)}(i)$
is non-decreasing, which implies that the left side of \eqref{eq:-15}
is non-increasing in $\beta$; 3. the left side of \eqref{eq:-15}
is non-negative. It is worth noting that if we replace $\sum_{i=0}^{r_{n}}P^{(A)}(i)$
at the left side of \eqref{eq:-15} with $P^{(A)}(r_{n})$, then the
asymptotic exponent is different from the above. Specifically, this
new exponent is zero for $2\sigma(1-\sigma)\le\beta\le\frac{1}{2}$
and it is symmetric with respect to $\beta=\frac{1}{2}$; see details
in Remark 29 of \cite{kirshner2021moment}. }Furthermore, Rashtchian
and Raynaud \cite{rashtchian2019edge} also derived different bounds
for the edge-isoperimetric problem for $Q_{n}^{r}$. Their bounds
are tight up to a factor of $\exp(\Theta(r))$ (i.e., a factor depending
only upon $r$). 

\subsection{Ball-Noise Stability: Probabilistic Reformulation of the Edge-Isoperimetric
Problem}

In this subsection, we reformulate the edge-isoperimetric problem
in probabilistic language. Let $\mathbf{X}\sim\mathrm{Unif}\{-1,1\}^{n}$
and $\mathbf{Y}=\mathbf{X}\circ\mathbf{Z}=(X_{i}\cdot Z_{i})_{1\le i\le n}$
where $\mathbf{Z}\in\{-1,1\}^{n}$ is independent of $\mathbf{X}$
and $\circ$ denotes the Hadamard product (element-wise product). 
\begin{defn}
\label{def:-For-two-1-1} For $f:\{-1,1\}^{n}\to\mathbb{R}$ and $r\in[0:n]$,
the \emph{sphere-noise stability} and \emph{ball-noise stability}
of $f$ at $r$ are respectively 
\begin{equation}
\mathbf{SStab}_{r}[f]:=\mathbb{E}[f(\mathbf{X})f(\mathbf{Y})]=\sum_{\mathbf{x},\mathbf{y}\in f^{-1}(1)}\frac{1\{d_{\mathrm{H}}(\mathbf{x},\mathbf{y})=r\}}{2^{n}{n \choose r}}\label{eq:-28}
\end{equation}
with $\mathbf{X}\sim\mathrm{Unif}\{-1,1\}^{n},\mathbf{Z}\sim\mathrm{Unif}(S_{r})$,
and 
\[
\mathbf{BStab}_{r}[f]:=\mathbb{E}[f(\mathbf{X})f(\mathbf{Y})]=\sum_{\mathbf{x},\mathbf{y}\in f^{-1}(1)}\frac{1\{d_{\mathrm{H}}(\mathbf{x},\mathbf{y})\le r\}}{2^{n}{n \choose \le r}}
\]
with $\mathbf{X}\sim\mathrm{Unif}\{-1,1\}^{n},\mathbf{Z}\sim\mathrm{Unif}(B_{r})$. 
\end{defn}

Obviously, the joint distribution $P_{\mathbf{XY}}$ is symmetric
(i.e., $P_{\mathbf{XY}}=P_{\mathbf{YX}}$) if $\mathbf{X}\sim\mathrm{Unif}\{-1,1\}^{n}$
and $\mathbf{Z}\sim\mathrm{Unif}(S_{r})$ or $\mathbf{Z}\sim\mathrm{Unif}(B_{r})$.
The edge-isoperimetric problem for $Q_{n}^{r}$ is equivalent to the
following question: For every $(n,r,M)$ such that $1\le r\le n$
and $1\le M\le2^{n}$, determine 
\begin{align}
\Gamma_{\mathrm{S}}^{(n)}(M,r) & :=\max_{\substack{f:\{-1,1\}^{n}\to\{0,1\}\\
\mathbb{P}[f=1]=\alpha
}
}\mathbf{SStab}_{r}[f]\label{eq:-101}\\
\Gamma_{\mathrm{B}}^{(n)}(M,r) & :=\max_{\substack{f:\{-1,1\}^{n}\to\{0,1\}\\
\mathbb{P}[f=1]=\alpha
}
}\mathbf{BStab}_{r}[f]\label{eq:-102}
\end{align}
and their limits as $n\to\infty$ for fixed $\alpha=\frac{M}{2^{n}}$
and $\beta=\frac{r}{n}$. Here, the Boolean function $f:\{-1,1\}^{n}\to\{0,1\}$
in the optimizations can be seen as the indicator of the code $A$.
Our motivation for studying the case in which $M$ is linear in $2^{n}$
comes from this probabilistic formulation of the edge-isoperimetric
problem, since in this probabilistic formulation, $\alpha=\frac{M}{2^{n}}$
corresponds to the probability of the code $A$ under the uniform
measure. For even and odd values of $r$, the limit behaviour is in
fact different for the case of interest here. For $\alpha,\beta\in(0,1)$,
we define 
\begin{align}
\Gamma_{\mathrm{odd,S}}(\alpha,\beta) & :=\lim_{n\to\infty}\Gamma_{\mathrm{S}}^{(n)}(\lfloor\alpha2^{n}\rfloor,2\lfloor\frac{\beta n}{2}\rfloor+1)\nonumber \\
\overline{\Gamma}_{\mathrm{even,S}}(\alpha,\beta) & :=\limsup_{n\to\infty}\Gamma_{\mathrm{S}}^{(n)}(\lfloor\alpha2^{n}\rfloor,2\lfloor\frac{\beta n}{2}\rfloor)\nonumber \\
\underline{\Gamma}_{\mathrm{even,S}}(\alpha,\beta) & :=\liminf_{n\to\infty}\Gamma_{\mathrm{S}}^{(n)}(\lfloor\alpha2^{n}\rfloor,2\lfloor\frac{\beta n}{2}\rfloor).\label{eq:-107}
\end{align}
By replacing sphere noise with ball noise, $\Gamma_{\mathrm{odd,B}}(\alpha,\beta),\overline{\Gamma}_{\mathrm{even,B}}(\alpha,\beta),\underline{\Gamma}_{\mathrm{even,B}}(\alpha,\beta)$
are defined similarly. (The limits in the definitions of $\Gamma_{\mathrm{odd,S}}(\alpha,\beta)$
and $\Gamma_{\mathrm{odd,B}}(\alpha,\beta)$ exist, which will be
shown in Theorem \ref{thm:bsiid}.) We term the optimization problems
in \eqref{eq:-101} and \eqref{eq:-102} respectively as the \emph{sphere-noise
stability} and \emph{ball-noise stability} problems.

The sphere-noise stability and ball-noise stability problems can be
seen as variants of the traditional i.i.d.-noise stability problem.
In the traditional noise stability problem, $\mathbf{Z}\sim\mathrm{Rad}^{\otimes n}(\beta)$.
Here $\mathrm{Rad}^{\otimes n}(\beta)$ denotes the $n$-product of
the biased Rademacher distribution $\mathrm{Rad}(\beta),\beta\in(0,\frac{1}{2})$
with itself, where the biased Rademacher distribution $\mathrm{Rad}(\beta)$
is a distribution having the probability mass function 
\[
P_{Z}(z)=\begin{cases}
1-\beta & z=1\\
\beta & z=-1
\end{cases}.
\]
The \emph{noise stability }of a function $f$ is defined as 
\begin{equation}
\mathbf{Stab}_{\beta}[f]:=\mathbb{E}[f(\mathbf{X})f(\mathbf{Y})]=\sum_{\mathbf{x},\mathbf{y}\in f^{-1}(1)}\frac{\beta^{d_{\mathrm{H}}(\mathbf{x},\mathbf{y})}(1-\beta)^{n-d_{\mathrm{H}}(\mathbf{x},\mathbf{y})}}{2^{n}},\label{eq:-11}
\end{equation}
where $\mathbf{X}\sim\mathrm{Unif}\{-1,1\}^{n},\mathbf{Z}\sim\mathrm{Rad}^{\otimes n}(\beta)$,
and again $\mathbf{Y}=\mathbf{X}\circ\mathbf{Z}$. Similarly to \eqref{eq:-101}-\eqref{eq:-107},
define 
\begin{align}
\Gamma_{\mathrm{IID}}^{(n)}(M,\beta):=\max_{\substack{f:\{-1,1\}^{n}\to\{0,1\}\\
\mathbb{P}[f=1]=\alpha
}
}\mathbf{Stab}_{\beta}[f]\quad\textrm{ and }\quad\Gamma_{\mathrm{IID}}(\alpha,\beta) & :=\lim_{n\to\infty}\Gamma_{\mathrm{IID}}^{(n)}(\lfloor\alpha2^{n}\rfloor,\beta).\label{eq:GammaIID}
\end{align}
Obviously, the limit in \eqref{eq:GammaIID} exists, since $\Gamma_{\mathrm{IID}}^{(n)}(\lfloor\alpha2^{n}\rfloor,\beta)$
is non-decreasing in $n$ for given $\alpha,\beta$.

The edge-isoperimetric problem for $Q_{n}^{r}$ and the ball-noise
stability problem in \eqref{eq:-102} are equivalent, as shown in
the following proposition. 
\begin{prop}
\label{prop:}For $A\subseteq\{-1,1\}^{n}$ with size $\alpha2^{n}$,
$\mathbf{BStab}_{r}[1_{A}]=\frac{\alpha^{2}2^{n}}{{n \choose \le r}}\sum_{i=0}^{r}P^{(A)}(i).$ 
\end{prop}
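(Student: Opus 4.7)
The proposition is essentially a bookkeeping statement relating the ball-noise stability (a probabilistic quantity) to the distance distribution of $A$ (a combinatorial quantity), so my plan is to unwind the definitions and count.

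First I would start from the rightmost form of $\mathbf{BStab}_{r}[f]$ given in Definition \ref{def:-For-two-1-1}, specialized to $f=1_{A}$: the sum $\sum_{\mathbf{x},\mathbf{y}\in f^{-1}(1)}\frac{1\{d_{\mathrm{H}}(\mathbf{x},\mathbf{y})\le r\}}{2^{n}\binom{n}{\le r}}$ becomes $\frac{1}{2^{n}\binom{n}{\le r}}\,\bigl|\{(\mathbf{x},\mathbf{y})\in A^{2}:d_{\mathrm{H}}(\mathbf{x},\mathbf{y})\le r\}\bigr|$ since $f^{-1}(1)=A$. Then I would partition the pair-counting set according to the exact Hamming distance $i\in[0:r]$, giving $\bigl|\{(\mathbf{x},\mathbf{y})\in A^{2}:d_{\mathrm{H}}(\mathbf{x},\mathbf{y})\le r\}\bigr|=\sum_{i=0}^{r}\bigl|\{(\mathbf{x},\mathbf{y})\in A^{2}:d_{\mathrm{H}}(\mathbf{x},\mathbf{y})=i\}\bigr|$.

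Next I would invoke the definition of the distance distribution $P^{(A)}(i):=\frac{1}{|A|^{2}}\bigl|\{(\mathbf{x},\mathbf{y})\in A^{2}:d_{\mathrm{H}}(\mathbf{x},\mathbf{y})=i\}\bigr|$ supplied in the paper to rewrite each term in the sum as $|A|^{2}P^{(A)}(i)$. Substituting $|A|=\alpha 2^{n}$ yields $|A|^{2}=\alpha^{2}2^{2n}$, and pulling this factor out of the sum gives the ball-noise stability $\frac{\alpha^{2}2^{2n}}{2^{n}\binom{n}{\le r}}\sum_{i=0}^{r}P^{(A)}(i)=\frac{\alpha^{2}2^{n}}{\binom{n}{\le r}}\sum_{i=0}^{r}P^{(A)}(i)$, which is precisely the claimed identity.

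There is no real obstacle here; the only thing to be careful about is keeping the normalization consistent, namely that the author's convention uses $\frac{1}{|A|^{2}}$ in the definition of $P^{(A)}$ (as noted in the footnote), which is exactly what produces the clean $\alpha^{2}$ in the numerator. Since this is purely a reformulation lemma meant to justify the equivalence between the edge-isoperimetric problem and the ball-noise stability problem in \eqref{eq:-102}, the proof should be no longer than a few lines of display math.
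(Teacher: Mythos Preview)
Your proposal is correct and follows essentially the same approach as the paper: both start from the summation form of $\mathbf{BStab}_{r}[1_{A}]$ in Definition~\ref{def:-For-two-1-1}, factor out $|A|^{2}=(\alpha 2^{n})^{2}$ from the pair count, and identify the remaining normalized sum with $\sum_{i=0}^{r}P^{(A)}(i)$. The paper's write-up is slightly more compressed (it does the renormalization and the identification in one line rather than explicitly partitioning by $i$), but the argument is identical.
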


\begin{proof}
\begin{align*}
\mathbf{BStab}_{r}[1_{A}] & =\mathbb{P}[\mathbf{X}\in A,\mathbf{Y}\in A]=\sum_{\mathbf{x},\mathbf{y}\in A}\frac{1\{d_{\mathrm{H}}(\mathbf{x},\mathbf{y})\le r\}}{2^{n}{n \choose \le r}}\\
 & =\frac{\alpha^{2}2^{n}}{{n \choose \le r}}\sum_{\mathbf{x},\mathbf{y}\in A}\frac{1\{d_{\mathrm{H}}(\mathbf{x},\mathbf{y})\le r\}}{(\alpha2^{n})^{2}}=\frac{\alpha^{2}2^{n}}{{n \choose \le r}}\sum_{i=0}^{r}P^{(A)}(i).
\end{align*}
\end{proof}
 {Combining \eqref{eq:-15} and Proposition \ref{prop:}
yields that when}\footnote{ {Throughout this paper, we use $o_{n}(1)$ to denote
a term (or sequence) that vanishes as $n\to\infty$.}} {{} $M_{n}=2^{n(H(\sigma)+o_{n}(1))}$ (i.e., $\alpha_{n}=2^{n(H(\sigma)-1+o_{n}(1))}$)
and $r_{n}=\lfloor\beta n\rfloor$ for some fixed $\sigma\in(0,\frac{1}{2}),\beta\in(0,1)$,
it holds that 
\begin{align}
 & \lim_{n\to\infty}-\frac{1}{n}\log_{2}\Gamma_{\mathrm{B}}^{(n)}(M_{n},r_{n})\nonumber \\
 & \qquad=\begin{cases}
D((1-\sigma-\frac{\beta}{2},\frac{\beta}{2},\frac{\beta}{2},\sigma-\frac{\beta}{2})\|(\frac{1-\beta}{2},\frac{\beta}{2},\frac{\beta}{2},\frac{1-\beta}{2})), & \beta\le2\sigma(1-\sigma)\\
1+H(\min\{\beta,\frac{1}{2}\})-2H(\sigma), & \beta>2\sigma(1-\sigma)
\end{cases},\label{eq:-14}
\end{align}
}where $D(Q\|P):=\sum_{x}Q(x)\log_{2}\frac{Q(x)}{P(x)}$ denotes the
relative entropy between two distributions $Q$ and $P$.

In the literature, the (i.i.d.) noise stability problem was studied
by Benjamini, Kalai, and Schramm \cite{benjamini1999noise}. When
$\alpha=\frac{1}{2}$ and $n\ge1$, $\Gamma_{\mathrm{IID}}^{(n)}(2^{n-1},\beta)=\frac{1-\beta}{2}$.
This is a consequence of Witsenhausen's results on maximal correlation
\cite{witsenhausen1975sequences}. When $\alpha=\frac{1}{4}$ and
$n\ge2$, Yu and Tan \cite{yu2021non} showed that $\Gamma_{\mathrm{IID}}^{(n)}(2^{n-2},\beta)=(\frac{1-\beta}{2})^{2}$.
In the literature, hypercontractivity inequalities were also used
to prove asymptotically tight (up to a factor $(\log\frac{1}{\alpha})^{k}$
for some $k$) bounds on $\Gamma_{\mathrm{IID}}(\alpha,\beta)$ as
$\alpha\to0$ for fixed $\beta$; see \cite{O'Donnell14analysisof,kamath2016non,yu2021non}.
Kirshner and Samorodnitsky's improved hypercontractivity inequality
in Theorem 8 of \cite{kirshner2021moment} implies that when $M_{n}=2^{n(H(\sigma)+o_{n}(1))}$
(i.e., $\alpha=2^{n(H(\sigma)-1+o_{n}(1))}$) for some $\sigma\in(0,\frac{1}{2})$,
the exponent of $\Gamma_{\mathrm{IID}}^{(n)}(M_{n},\beta)$ for $\beta\in(0,\frac{1}{2})$
is 
\begin{align}
\lim_{n\to\infty}-\frac{1}{n}\log_{2}\Gamma_{\mathrm{IID}}^{(n)}(M_{n},\beta) & =\min_{Q_{XY}:Q_{X}=Q_{Y}=(1-\sigma,\sigma)}D(Q_{XY}\|P_{XY})\label{eq:}\\
 & =\min_{0\le\theta\le2\sigma}D\big((1-\sigma-\frac{\theta}{2},\frac{\theta}{2},\frac{\theta}{2},\sigma-\frac{\theta}{2})\|(\frac{1-\beta}{2},\frac{\beta}{2},\frac{\beta}{2},\frac{1-\beta}{2})\big),\label{eq:-16}
\end{align}
 {where the right side of \eqref{eq:} is termed the} {\emph{
minimum relative entropy}} {{} over couplings of $Q_{X}=Q_{Y}=(1-\sigma,\sigma)$
\cite{yu2021graphs,yu2021strong}, and the unique optimal $\theta$
attaining the minimum in \eqref{eq:-16} is $\theta^{*}=\frac{\sqrt{1+4(\kappa-1)\sigma(1-\sigma)}-1}{\kappa-1}$
with $\kappa=(\frac{1-\beta}{\beta})^{2}$. Here, the optimal exponent
in \eqref{eq:-16} is attained by a sequence of Hamming balls (or
Hamming spheres). The result in \eqref{eq:} and \eqref{eq:-16} was
generalized to the two set version of noise stability and also generalized
to arbitrary distributions on finite alphabets or Polish spaces in
\cite{yu2021graphs,yu2021strong}. Furthermore, for $\sigma<\frac{1}{2},\beta\le2\sigma(1-\sigma)$,
it holds that $\theta^{*}<\beta$, which implies that for this case,
the expression in \eqref{eq:-16} is strictly smaller than that in
\eqref{eq:-14}. }

\subsection{Main Results}

In this paper, we study the discrete edge-isoperimetric problem for
$Q_{n}^{r}$ with $r\ge1$. We apply two different techniques to derive
bounds for this problem. The first one is Fourier analysis combined
with linear programming duality. By such a technique, we prove the
following bound which is called linear programming (LP) bound. 
\begin{thm}[Linear Programming Bounds]
\label{thm:LPBound} 1. For $\alpha:=\frac{M}{2^{n}}\le\frac{1}{2}$,
\begin{align}
\Gamma_{\mathrm{B}}^{(n)}(M,r) & \leq\alpha^{2}[1-\frac{\psi_{n}^{+}(\alpha,r)}{{n \choose \le r}}]+\alpha(1-\alpha)\frac{{n-1 \choose r}}{{n \choose \le r}},\label{eq:-5}
\end{align}
where\footnote{Throughout this paper, $[x]^{+}:=\max\{x,0\}$. } $\psi_{n}^{+}(\alpha,r):=[\psi_{n}(\alpha,r)]^{+}$
and 
\begin{equation}
\psi_{n}(\alpha,r):=\begin{cases}
\underset{\textrm{odd }k\in[n-\tau(n):n]}{\max}\frac{n{n-2 \choose r-1}}{2k-n}[2(\frac{1}{\alpha}-1)-\frac{1}{\alpha}\frac{n+1}{k+1}], & \textrm{even }r\le n/2-1\\
\underset{k\in[n-\tau(n):n]\cap F}{\max}\frac{(n-1){n-2 \choose r-1}}{2k-n-1}[2(\frac{1}{\alpha}-1)-\frac{1}{\alpha}\frac{n}{k}], & \textrm{odd }r\le n/2-1\\
\underset{\textrm{odd }k\in[\frac{n}{2}+1:n]}{\max}\frac{k{n-2 \choose r+1}}{(2k-n)}[2(\frac{1}{\alpha}-1)-\frac{1}{\alpha}\frac{n}{k}], & \textrm{even }r>n/2-1\\
\underset{\textrm{odd }k\in[\frac{n}{2}+1:n]}{\max}\frac{k{n-2 \choose r}}{(2k-n)}[2(\frac{1}{\alpha}-1)-\frac{1}{\alpha}\frac{n}{k}], & \textrm{odd }r>n/2-1
\end{cases}\label{eq:psi}
\end{equation}
with 
\begin{equation}
\tau(n):=\frac{1}{2}(\frac{n}{2}+2-\sqrt{\frac{n}{2}+2})\label{eq:tau}
\end{equation}
and 
\begin{align}
F & :=\Big\{\textrm{even }k:\;k\ge\frac{n+r+1}{2}\Big\}\nonumber \\
 & \qquad\cup\Big\{\textrm{odd }k:\:k\ge\max\{\frac{n+1}{2},\frac{(n-1)r}{n-1-r}\}+\sqrt{\frac{(n+1)r}{2(n-1-r)}}\Big\}.\label{eq:F}
\end{align}
2. When considering the asymptotic case as $n\to\infty$, we have
that for fixed $\delta>0$, 
\begin{align}
\Gamma_{\mathrm{B}}^{(n)}(M,r) & \leq\alpha^{2}(1-(1-2\beta)\phi_{n}(\alpha,r))+o_{n}(1)\label{eq:-109-1}
\end{align}
holds for all $\alpha:=\frac{M}{2^{n}}\in[\delta,\frac{1}{2}-\delta],\beta:=r/n\in(0,\frac{1}{2}-\delta]\cup[\frac{1}{2},1]$,
where $o_{n}(1)$ is only dependent on $\delta$ and independent of
$M,r$. Here, 
\[
\phi_{n}(\alpha,r):=\begin{cases}
\beta\varphi(\alpha)-(\frac{1}{\alpha}-1), & \textrm{even }r\le n/2-1\\
\beta\hat{\varphi}(\alpha,\beta)-(\frac{1}{\alpha}-1), & \textrm{odd }r\le n/2-1\\
0, & r>n/2-1
\end{cases}
\]
where 
\begin{align*}
\varphi(\alpha) & :=\begin{cases}
\frac{2(1-\sqrt{\alpha})^{2}}{\alpha}, & 0\le\alpha<1/4\\
\frac{1}{\alpha}-2, & 1/4\le\alpha\le1/2
\end{cases}\\
\hat{\varphi}(\alpha,\beta) & :=\begin{cases}
\frac{1}{2\hat{\eta}-1}[2(\frac{1}{\alpha}-1)-\frac{1}{\alpha\hat{\eta}}], & 0\le\alpha<1/4\\
\frac{1}{\alpha}-2, & 1/4\le\alpha\le1/2
\end{cases}
\end{align*}
with $\hat{\eta}:=\max\big\{\frac{1}{2(1-\sqrt{\alpha})},\min\{\frac{1+\beta}{2},\frac{\beta}{1-\beta}\}\big\}.$ 
\end{thm}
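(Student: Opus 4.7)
The plan is to set up a linear programming relaxation of the ball-noise stability via Fourier analysis on $\{-1,1\}^n$ and then construct explicit dual feasible solutions, case by case according to the parity and size of $r$. Writing $f = 1_A$ with $\mathbb{E}[f]=\alpha$ and expanding $f = \sum_S \hat{f}(S) \chi_S$, one diagonalizes the ball-noise operator by the characters, with Krawtchouk-type eigenvalues
\[
\mathbf{BStab}_r[f] = \sum_{k=0}^n \mu_k\, W_k, \qquad W_k := \sum_{|S|=k} \hat{f}(S)^2, \qquad \mu_k := \frac{1}{\binom{n}{\le r}}\sum_{j=0}^r K_j(k),
\]
where $K_j$ is the $j$th Krawtchouk polynomial. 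A telescoping computation gives $\mu_0 = 1$ and $\mu_1 = \binom{n-1}{r}/\binom{n}{\le r}$. Parseval and the Boolean identity $f^2 = f$ yield $W_0 = \alpha^2$, $\sum_k W_k = \alpha$, and $W_k \ge 0$; writing the distance distribution $P^{(A)}$ in the Fourier basis produces the additional MacWilliams constraints $\sum_k K_i(k) W_k \ge 0$ for each $i \in [0:n]$.

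The coarse LP using only Parseval and $W_k \ge 0$ reproduces exactly the term $\alpha^2 \mu_0 + \alpha(1-\alpha)\mu_1$ of \eqref{eq:-5}, so the task is to produce the negative correction $-\alpha^2 \psi_n^+/\binom{n}{\le r}$. For each of the four regimes of \eqref{eq:psi}, the plan is to activate a single MacWilliams constraint at a carefully chosen level $i^* = k$ of the appropriate parity, attached to a non-negative multiplier $\lambda$, so that the dual feasibility condition $c_0 \delta_{k',0} + c_1 \ge \mu_{k'} + \lambda K_{i^*}(k')$ can be solved with $c_1 = \mu_1 + \lambda K_{i^*}(1)$ while keeping all the other-level constraints satisfied. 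Since $K_{i^*}(1) = \binom{n-1}{i^*} - \binom{n-1}{i^*-1}$ is negative for $i^*$ in the upper half of the spectrum, choosing $i^* = k$ of the correct parity in this range actually lowers the dual objective; optimizing first in $\lambda$ and then in $k$ produces the bracketed maximization in \eqref{eq:psi}. The cutoff $\tau(n)$ in \eqref{eq:tau} arises as the width in which the truncated Krawtchouk sum stays sign-controlled near the top of the spectrum, and the set $F$ in \eqref{eq:F} is the locus of odd-$r$ witnesses $k$ for which the dual inequality at the neighboring level is automatically compatible with the one at $k$. Taking the positive part $[\,\cdot\,]^+$ at the end accounts for the possibility that no admissible multiplier improves on the coarse LP.

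For part 2, the plan is to pass to $n \to \infty$ with $\alpha \in [\delta, 1/2-\delta]$ and $\beta = r/n$ fixed, and apply Stirling approximations to the binomial ratios inside $\psi_n$. The prefactor $\binom{n-2}{r-1}/\binom{n}{\le r}$, and its analogues in the other cases, converges after normalization to a constant depending on $\beta$; the bracket $[2(1/\alpha - 1) - (1/\alpha)(n+1)/(k+1)]$ tends to $[2(1/\alpha - 1) - (1/\alpha)/\hat{\eta}]$ with $\hat{\eta} = \lim k/n$; and the leading factor $n/(2k-n)$ becomes $1/(2\hat{\eta} - 1)$. Optimizing the resulting expression over $\hat{\eta} \in [1/2, 1]$ yields $\varphi(\alpha)$ in the even-$r$ case and $\hat{\varphi}(\alpha,\beta)$ in the odd-$r$ case; the piecewise split at $\alpha = 1/4$ comes from whether the unconstrained stationary point $\hat{\eta} = 1/(2(1-\sqrt{\alpha}))$ lies inside $[1/2,1]$. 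For $r > n/2 - 1$, the whole correction $\psi_n^+/\binom{n}{\le r}$ is $o_n(1)$, so $\phi_n$ is zero in that regime.

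The hard part will be the verification of dual feasibility in part 1: one must show that the constructed multiplier vector dominates $\mu_k$ uniformly over every $k \in [0:n]$, not merely at the witness level. This reduces to sharp monotonicity and sign information about the partial Krawtchouk sums $\sum_{j=0}^r K_j(k)$ and about the single Krawtchouk values $K_{i^*}(k)$, and it is precisely here that the parity of $k$, the cutoff $\tau(n)$, and the set $F$ become indispensable. Once dual feasibility is secured, weak LP duality immediately delivers \eqref{eq:-5}, and the asymptotic transfer in part 2 is a routine Stirling computation together with an optimization over the single real parameter $\hat{\eta}$.
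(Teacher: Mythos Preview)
Your LP setup and duality outline match the paper's, but your plan to use a \emph{single} MacWilliams multiplier at one level $i^*=k$ has a genuine gap in the even $r\le n/2-1$ regime. With a $1$-sparse dual vector at odd $k>n/2$, if you tighten the dual constraint at $i=2$ (which determines $\lambda$), the constraint at $i=n-1$ is actually \emph{violated}: for even $r$ one has $\omega_{n-1}=K_r^{(n-1)}(n-2)=K_r^{(n-1)}(1)$ and $K_k(n-1)=-K_k(1)$, and a short computation shows the resulting slack $\varphi(n-1)$ is strictly positive (e.g.\ try $n=10$, $k=9$). So a single multiplier cannot deliver the first clause of \eqref{eq:psi}. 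The paper handles this case with a \emph{$2$-sparse} dual vector supported on consecutive levels $k,k+1$ with the special choice $x_k^*=x_{k+1}^*=\dfrac{n\binom{n-2}{r-1}}{\binom{n}{k}(2k-n)}$; this simultaneously makes the constraints at $i=2$ and $i=n$ tight and forces $\varphi(n-1)=0$ exactly. The factor $\tfrac{n+1}{k+1}$ in the first line of \eqref{eq:psi}, as opposed to $\tfrac{n}{k}$ in the others, is the fingerprint of this two-level construction, and your single-multiplier ansatz cannot reproduce it.

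Your description of where $\tau(n)$ comes from is also off. It is not a sign-control width for the \emph{partial sums} $\sum_{j\le r}K_j(k)$; rather, after the substitution $K_k(i)=(-1)^iK_{n-k}(i)$, feasibility on $i\in[3:n-2]$ reduces to the pointwise extremal inequality $K_{n-k}^{(n)}(2)\ge |K_{n-k}^{(n)}(i)|$, which the paper proves holds precisely for $n-k\le\tau(n)$ (Lemma~\ref{lem:exactextreme}, Statement~3). Likewise the set $F$ in the odd-$r$ case is not about compatibility ``at the neighboring level'' but is exactly the range of $k$ for which the endpoint checks $\varphi(n-1)\le 0$ and $\varphi(n)\le 0$ succeed, with different sufficient conditions according to the parity of $k$. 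Your asymptotic sketch for Part~2 is fine in spirit, but note that the enlargement of the admissible range from $[n-\tau(n):n]$ to $[(\tfrac12+\delta)n:n]$ relies on an asymptotic extremal Krawtchouk estimate (Lemma~\ref{lem:asymextreme}), not merely on Stirling.
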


For comparison, observe that when $\alpha=2^{-k}$ for a positive
integer $k$, every $(n-k)$-dimensional Hamming subcube $C_{n-k}$
(e.g., $\{1\}^{k}\times\{-1,1\}^{n-k}$) attains the following ball-noise
stability: 
\begin{equation}
\mathbf{BStab}_{r}[1_{C_{n-k}}]=\alpha\frac{{n-k \choose \le r}}{{n \choose \le r}}.\label{eq:-79}
\end{equation}
In particular, for fixed $k$, as $n,r\to\infty$ and $r/n\to\beta$,
\begin{align}
\mathbf{BStab}_{r}[1_{C_{n-k}}] & \to(\frac{1-\beta}{2})^{k}.\label{eq:-114}
\end{align}

\begin{defn}
\label{def:even-part} The \emph{even part} $A_{\mathrm{even}}$ of
a set $A\subseteq\{-1,1\}^{n}$ is defined as $\{\mathbf{x}\in A:d_{\mathrm{H}}(\mathbf{x},\mathbf{1})\textrm{ is even}\}$,
i.e., the intersection of $A$ and the set of vectors $\mathbf{x}$
of even Hamming weight $d_{\mathrm{H}}(\mathbf{x},\mathbf{1})$. The
\emph{odd part} of $A$ is defined as $A_{\mathrm{odd}}=A\backslash A_{\mathrm{even}}$. 
\end{defn}

For even $r$, the same ball-noise stability as in \eqref{eq:-79}
can be also achieved by the even part (or odd part) of an $(n-k+1)$-dimensional
subcube, e.g., $\{\mathbf{x}\in\{1\}^{k-1}\times\{-1,1\}^{n+1-k}:d_{\mathrm{H}}(\mathbf{x},\mathbf{1})\textrm{ is even}\}.$

Comparing Theorem \ref{thm:LPBound} with \eqref{eq:-79} and \eqref{eq:-114},
we know that the bound in \eqref{eq:-5} is tight for $\alpha=1/2$
and $n\ge1$ as well as for $\alpha=1/4$, $r\le n/2-1$, and $n\ge2$.
For the former case ($\alpha=1/2$ and $n\ge1$),  {$\psi_{n}^{+}(1/2,r)=0$},
which leads to the tight result 
\[
\Gamma_{\mathrm{B}}^{(n)}(2^{n-1},r)=\frac{{n-1 \choose \le r}}{2{n \choose \le r}}\textrm{ and }\Gamma_{\mathrm{B}}(1/2,\beta)=\frac{1-\beta}{2},\;\beta\in(0,1).
\]
This recovers a classic result derived by Kahn, Kalai, and Linial
\cite{kahn1988influence}. For the latter case ($\alpha=1/4$, $r\le n/2-1$,
and $n\ge2$),  {$\psi_{n}(1/4,r)=2{n-2 \choose r-1}$, where the optimal
$k$ attaining $\psi_{n}(1/4,r)$ in \eqref{eq:psi} for even $r\le n/2-1$
is $k^{*}=n$ for odd $n$, and $k^{*}=n-1$ for even $n$, and the
optimal $k$ attaining $\psi_{n}(1/4,r)$ for odd $r\le n/2-1$ is
$k^{*}=n$.} This leads to that 
\[
\Gamma_{\mathrm{B}}^{(n)}(2^{n-2},r)=\frac{{n-2 \choose \le r}}{4{n \choose \le r}}\textrm{ and }\Gamma_{\mathrm{B}}(1/4,\beta)=(\frac{1-\beta}{2})^{2},\;\beta\in(0,1/2).
\]
This result is new. 

 {For fixed $\alpha=\frac{M}{2^{n}}\le\frac{1}{2}$
and sufficiently large $n$, Rashtchian and Raynaud's bound in \cite{rashtchian2019edge}
reduces to the following bound: 
\begin{align*}
\Gamma_{\mathrm{B}}^{(n)}(M,r) & \leq\frac{2\alpha}{{n \choose \le r}}[\frac{16\mathrm{e}n}{r}(n-\log_{2}\frac{1}{\alpha})]^{r/2}.
\end{align*}
When considering the setting of interest in this paper (i.e., the
case of fixed $\alpha=\frac{M}{2^{n}}\in(0,\frac{1}{2}]$, $\beta=\frac{r}{n}$,
but $n\to\infty$), Rashtchian and Raynaud's bound becomes $\frac{2\alpha}{{n \choose \le n\beta}}[\frac{16\mathrm{e}}{\beta}(n-\log_{2}\frac{1}{\alpha})]^{n\beta/2}$,
which tends to $\infty$ as $n\to\infty$. Hence, their bound is trivial
for our setting.  We next compare our result with Kirshner and Samorodnitsky's
in \eqref{eq:-12}.  We first restate their result in the language
of the sphere-noise stability. Similarly to Proposition \ref{prop:},
for sphere noise, it holds that $\mathbf{SStab}_{r}[1_{A}]=\frac{\alpha^{2}2^{n}}{{n \choose r}}P^{(A)}(r)$.
Substituting the bound in \eqref{eq:-12} into this formula yields
the following bound: For $M=2^{nH(\sigma)}$ with $\sigma\in(0,\frac{1}{2})$
and $r=n\beta$ with $\beta\le2\sigma(1-\sigma)$, 
\begin{align}
\Gamma_{\mathrm{S}}^{(n)}(M,r) & \leq\frac{1}{{n \choose r}}2^{n[-\beta\log_{2}\beta-(\sigma-\frac{\beta}{2})\log_{2}(2\sigma-\beta)-(1-\sigma-\frac{\beta}{2})\log_{2}(2-2\sigma-\beta)]}.\label{eq:-33}
\end{align}
For fixed $0<\alpha\le\frac{1}{2}$, by solving $2^{nH(\sigma)}=\alpha2^{n}$,
we obtain $\sigma=\frac{1-t}{2}$ where $t=2\sqrt{\frac{1}{2n}\ln\frac{1}{\alpha}}+o_{n}(\frac{1}{\sqrt{n}})$.
On the other hand, for fixed $0<\beta\le\frac{1}{2}$, Stirling's
formula implies ${n \choose r}=2^{nH(\beta)-\frac{1}{2}\log_{2}(2\pi n)+O_{n}(1)}.$
By this formula, the right side of \eqref{eq:-33} reduces to $2^{-\frac{2}{1-\beta}\log_{2}\frac{1}{\alpha}+\frac{1}{2}\log_{2}(2\pi n)+O(1)}$,
which obviously also tends to $\infty$ as $n\to\infty$. In other
words, Kirshner and Samorodnitsky's bound becomes trivial as well
for our setting.  However, it should be noted that Kirshner and
Samorodnitsky's bound outperforms our bound when $\alpha$ vanishes
exponentially as $n\to\infty$, since their bound is exponentially
tight in this setting. }

The proof of Theorem \ref{thm:LPBound} is given in Section \ref{sec:Proof-of-Theorem-LPB}.
Here we provide the outline of the proof. In our proof, we first relax
the edge-isoperimetric problem to a linear program by employing Fourier
analysis. By duality in linear programming, we then rewrite this program
as its dual. Finally, we find a feasible solution to the dual program
which hence provides a lower bound for the primal program. Such a
lower bound also results in a lower bound for the edge-isoperimetric
problem.

We next present our second bound, which is proven by a probabilistic
approach. 
\begin{thm}[Probabilistic Bounds]
\label{thm:bsiid} For $0<\alpha,\beta\leq\frac{1}{2}$, 
\begin{align}
 & \Gamma_{\mathrm{odd,S}}(\alpha,\beta)=\Gamma_{\mathrm{odd,B}}(\alpha,\beta)=\Gamma_{\mathrm{IID}}(\alpha,\beta),\label{eq:-13}\\
\Gamma_{\mathrm{IID}}(\alpha,\beta),\,\frac{1}{2}\Gamma_{\mathrm{IID}}(2\alpha,\beta)\leq & \underline{\Gamma}_{\mathrm{even,S}}(\alpha,\beta)\leq\overline{\Gamma}_{\mathrm{even,S}}(\alpha,\beta)\leq2\Gamma_{\mathrm{IID}}(\alpha,\beta),\label{eq:-116}\\
\Gamma_{\mathrm{IID}}(\alpha,\beta)\leq & \underline{\Gamma}_{\mathrm{even,B}}(\alpha,\beta)\leq\overline{\Gamma}_{\mathrm{even,B}}(\alpha,\beta)\leq2(1-\beta)\Gamma_{\mathrm{IID}}(\alpha,\beta).\label{eq:-117}
\end{align}
\end{thm}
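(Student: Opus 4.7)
I would organise the proof around two elementary tools. First, conditioning on the number of $-1$-coordinates in the noise yields the mixture identities
\begin{equation*}
\mathbf{Stab}_\beta[f]=\sum_{i=0}^n \binom{n}{i}\beta^i(1-\beta)^{n-i}\,\mathbf{SStab}_i[f],\qquad \mathbf{BStab}_r[f]=\sum_{i=0}^r\frac{\binom{n}{i}}{\binom{n}{\le r}}\,\mathbf{SStab}_i[f],
\end{equation*}
which come from the fact that, conditional on having $i$ flips, the noise is $\mathrm{Unif}(S_i)$. Second, the \emph{blow-up} $\tilde f(\mathbf{x}):=f(x_1,\dots,x_n)$ of $f:\{-1,1\}^n\to\{0,1\}$ to a function on $\{-1,1\}^N$ with $N\gg n$ preserves density $\alpha$ and, because the marginal of $\mathrm{Unif}(S_r^{(N)})$ or $\mathrm{Unif}(B_r^{(N)})$ on the first $n$ coordinates converges in total variation to $\mathrm{Rad}^{\otimes n}(\beta)$ as $N\to\infty$ with $r/N\to\beta$ (hypergeometric-to-binomial convergence), satisfies $\mathbf{SStab}_r^{(N)}[\tilde f],\mathbf{BStab}_r^{(N)}[\tilde f]\to\mathbf{Stab}_\beta^{(n)}[f]$.

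\textbf{Odd case \eqref{eq:-13}.} The $\ge$ direction is immediate from the blow-up: take an $\varepsilon$-optimizer $f$ for $\Gamma_{\mathrm{IID}}(\alpha,\beta)$, lift to $\tilde f$ on $\{-1,1\}^N$ with odd $r_N=2\lfloor\beta N/2\rfloor+1$, and pass to the limit. The $\le$ direction is the heart of the matter. I would work in the Fourier basis: the sphere-noise eigenvalue at level $k$ is $\rho_{r_n}(k)=K_k(r_n)/\binom{n}{k}$ (Kravchuk polynomial), converging to $(1-2\beta)^k$ at every fixed $k$ as $r_n/n\to\beta$, which is exactly the i.i.d.\ eigenvalue. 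Hypercontractivity (Bonami--Beckner) forces the Fourier $\ell^2$-mass of any density-$\alpha$ indicator (with $\alpha$ fixed) to live at levels $k=o(n)$ up to $o(1)$, so the low-degree truncation preserves $\mathbf{SStab}_{r_n}[f]$ and $\mathbf{Stab}_{r_n/n}[f]$ up to $o(1)$ uniformly in $f$. The only residual discrepancy is at $k=n$, where $\rho_{r_n}(n)=(-1)^{r_n}=-1$ for odd $r_n$, so its contribution $-\hat f([n])^2$ is non-positive. This yields $\mathbf{SStab}_{r_n}^{(n)}[f]\le\mathbf{Stab}_{r_n/n}^{(n)}[f]+o(1)\le\Gamma_{\mathrm{IID}}(\alpha,\beta)+o(1)$. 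The ball version follows from a parallel argument, using that for $\beta<1/2$ the mixture weights $\binom{n}{i}/\binom{n}{\le r_n}$ also concentrate on a $\sqrt n$-window around $r_n$.

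\textbf{Even case \eqref{eq:-116}--\eqref{eq:-117}.} The factor of $2$ emerges from a parity decomposition. Since even-$i$ sphere noise preserves the parity of $d_{\mathrm{H}}(\cdot,\mathbf{1})$, $\mathbf{SStab}_i[1_A]=\mathbf{SStab}_i[1_{A_{\mathrm{even}}}]+\mathbf{SStab}_i[1_{A_{\mathrm{odd}}}]$ using Definition~\ref{def:even-part}; each piece has density at most $\alpha$ on $\{-1,1\}^n$, and the Fourier argument of the odd case applies to each (the degree-$n$ correction $+\hat{\cdot}([n])^2$ is now positive but bounded), giving $\overline{\Gamma}_{\mathrm{even,S}}(\alpha,\beta)\le 2\Gamma_{\mathrm{IID}}(\alpha,\beta)$. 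The lower bounds $\underline{\Gamma}_{\mathrm{even},\cdot}(\alpha,\beta)\ge\Gamma_{\mathrm{IID}}(\alpha,\beta)$ follow from the blow-up with even $r$, and $\underline{\Gamma}_{\mathrm{even,S}}(\alpha,\beta)\ge\tfrac12\Gamma_{\mathrm{IID}}(2\alpha,\beta)$ is obtained by taking a $\Gamma_{\mathrm{IID}}(2\alpha,\beta)$-optimizer on a smaller cube and intersecting its blow-up with $V_{\mathrm{even}}$, halving the density while, by parity-preservation of even-sphere noise, retaining exactly half of the stability. For the ball case \eqref{eq:-117}, the splitting factor $2$ is replaced by $2(1-\beta)$: under $\mathrm{Unif}(B_r)$ with $r/n\to\beta<1/2$, the total mass on even-index spheres is $\sum_{\text{even }i\le r}\binom{n}{i}/\binom{n}{\le r}\to 1-\beta$ (a geometric-sum computation from $\binom{n}{i-2}/\binom{n}{i}\to(\beta/(1-\beta))^2$), instead of exactly one half for sphere noise.

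\textbf{Main obstacle.} The hardest step is the $\le$ direction of the odd case. The naive mixture inequality only gives $\Gamma_{\mathrm{IID}}^{(n)}(M,\beta)\ge b_{n,\beta}(r)\,\Gamma_{\mathrm{S}}^{(n)}(M,r)$ with $b_{n,\beta}(r)=\Theta(n^{-1/2})$, a polynomial loss. Closing this gap requires the Fourier/hypercontractivity argument uniformly over the extremizer, together with the sign of the parity-$n$ eigenvalue for odd $r$. Once this case is settled, the even case and the ball variants reduce to the parity decomposition and weight-counting sketched above.
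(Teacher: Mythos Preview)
The central gap is in your $\le$ direction for the odd case: the claim that hypercontractivity forces the Fourier mass of a density-$\alpha$ indicator to concentrate at levels $k=o(n)$ is false. The indicator of the even-weight vectors, $f=\tfrac{1+\chi_{[n]}}{2}$, has $\alpha=1/2$ and $\mathbf{W}_n[f]=1/4$; more generally, even parts of subcubes (Definition~\ref{def:even-part}) carry a constant fraction of their Fourier mass at levels $n,n-1,\dots,n-k_0$ for any fixed $k_0$. Hypercontractivity controls weighted tails such as $\sum_k\rho^{2k}\mathbf{W}_k$, not the unweighted $\sum_{k>K}\mathbf{W}_k$, so no low-degree truncation is available uniformly over $f$, and singling out only the single level $k=n$ is not enough.

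The paper's fix is to bound the \emph{eigenvalues} rather than the \emph{weights}. Writing $\mathbf{SStab}_r[f]=\sum_k\mathbf{W}_k\,K_r(k)/\binom{n}{r}$, split the sum at $k_0$ and $n-k_0$. On the bottom block the eigenvalues converge to $(1-2\beta)^k$; on the top block the symmetry $K_r(n-k)=(-1)^rK_r(k)$ gives eigenvalues $(-1)^r(1-2\beta)^{n-k}+o(1)$; and on the middle block one has the uniform Krawtchouk estimate $\max_{k_0<k<n-k_0}|K_r(k)|/\binom{n}{r}\le(1-2\beta)^{k_0+1}+o(1)$ (Lemma~\ref{lem:asymextreme}). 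Calling the three contributions $a(k_0)$, $(-1)^rb(k_0)$ and $\pm c(k_0)$, the middle piece satisfies $c(k_0)\le\alpha(1-2\beta)^{k_0+1}\to0$ as $k_0\to\infty$ \emph{regardless of where the Fourier mass sits}, and for odd $r$ the top piece contributes $-b(k_0)\le0$. This yields $\mathbf{SStab}_r[f]\le a(k_0)+c(k_0)+o(1)\le\mathbf{Stab}_\beta[f]+o(1)$ uniformly in $f$.

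The gap propagates to your even-case upper bound: applying ``the Fourier argument of the odd case'' to $1_{A_{\mathrm{even}}}$ is exactly the situation where high-degree mass is unavoidable. The paper instead exploits the nonnegativity $0\le\mathbf{SStab}_{r-1}[f]$ (with $r-1$ odd), which reads $b(k_0)\le a(k_0)+c(k_0)+o(1)$; for even $r$ this gives $\mathbf{SStab}_r[f]\le a(k_0)+b(k_0)+c(k_0)+o(1)\le 2\mathbf{Stab}_\beta[f]+o(1)$ directly, without splitting $A$. The ball-noise bounds \eqref{eq:-117} come from the same three-block decomposition applied to the ball eigenvalues $K_r^{(n-1)}(k-1)/\binom{n}{\le r}$, not from your even-sphere-mass-fraction computation. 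Your blow-up constructions for all the lower bounds are correct and agree with the paper; the ``retaining exactly half the stability'' step in the $\tfrac12\Gamma_{\mathrm{IID}}(2\alpha,\beta)$ bound does work, essentially because the blow-up $\tilde f$ has $\hat{\tilde f}_{[N]}=0$, so $\tilde f_{\mathrm{even}}$ and $\tilde f_{\mathrm{odd}}$ are Fourier-symmetric and $K_r(N-k)=K_r(k)$ for even $r$.
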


We conjecture that for $0<\alpha,\beta\leq\frac{1}{2}$, $\underline{\Gamma}_{\mathrm{even,S}}(\alpha,\beta)=\overline{\Gamma}_{\mathrm{even,S}}(\alpha,\beta)=\frac{1}{2}\Gamma_{\mathrm{IID}}(2\alpha,\beta)$
and $\underline{\Gamma}_{\mathrm{even,B}}(\alpha,\beta)=\overline{\Gamma}_{\mathrm{even,B}}(\alpha,\beta)=\Gamma_{\mathrm{IID}}(\alpha,\beta)$.
This is true if $\overline{\Gamma}_{\mathrm{even,S}}(\alpha,\beta)$
is attained by $\frac{1+\chi_{[1:n]}}{2}f_{n}$ for some \emph{Fourier-weight
stable} (see Definition \ref{def:Fourierweightstable} in Section
\ref{sec:Proof-of-Theorem-HCB}) sequence of Boolean functions $f_{n}$
and $\overline{\Gamma}_{\mathrm{even,B}}(\alpha,\beta)$ is attained
by some Fourier-weight stable sequence of Boolean functions $g_{n}$.
 For the ball noise case, this conjecture was confirmed positively
for $\alpha=1/2$ by combining Kahn, Kalai, and Linial's result  \cite{kahn1988influence}
and Witsenhausen's result \cite{witsenhausen1975sequences}, and
for $\alpha=1/4$ by combining Yu and Tan's result \cite{yu2021non}
and Theorem \ref{thm:LPBound} above. 

The proof of Theorem \ref{thm:bsiid} is given in Section \ref{sec:Proof-of-Theorem-HCB}.
As observed in Proposition \ref{prop:}, the edge-isoperimetric problem
for $Q_{n}^{r}$ is equivalent to a ball-noise stability problem.
By Fourier analysis, we show that this ball-noise stability and the
sphere-noise stability are bounded by the traditional i.i.d. noise
stability. Hence, we obtain lower bounds for the edge-isoperimetric
problem.

 {Instead of the noise stability problem, a hypercontractivity
inequality for the sphere noise was recently investigated by Polyanskiy
\cite{polyanskiy2019hypercontractivity}. Polyanskiy's hypercontractivity
inequality differs from the sphere-noise stability problem here in
two aspects: 1. The hypercontractivity inequality concerns estimations
of $\mathbb{E}[f(\mathbf{X})f(\mathbf{Y})]$ for all possible real-valued
or complex-valued functions $f$, while the noise stability problem
restricts $f$ to be a Boolean (i.e., binary-valued) function; 2.
the noise model $(\mathbf{X},\mathbf{Y})$ in Polyanskiy's hypercontractivity
inequality is formed by adding the sphere noise twice to the uniform
random variable $\mathbf{X}$, i.e., $\mathbf{Y}=\mathbf{X}\circ\mathbf{Z}_{1}\circ\mathbf{Z}_{2}$
where $\mathbf{Z}_{1}$ and $\mathbf{Z}_{2}$ are i.i.d. sphere noises,
while the noise model $(\mathbf{X},\mathbf{Y})$ in the sphere-noise
stability problem here is formed by adding the sphere noise only once,
i.e., $\mathbf{Y}=\mathbf{X}\circ\mathbf{Z}$ where $\mathbf{Z}$
is a sphere noise. Note that for independent Rademacher noises $\mathbf{Z}_{1}$
and $\mathbf{Z}_{2}$, the Hadamard product $\mathbf{Z}_{1}\circ\mathbf{Z}_{2}$
is still a Rademacher noise. However, this is not true for sphere
noise. Even so,  our strategy to prove Theorem \ref{thm:bsiid} is
similar to the one used by Polyanskiy \cite{polyanskiy2019hypercontractivity},
and both of them exploit the connections between the i.i.d. Rademacher
noise and the sphere-noise (or ball-noise).  More precisely, the
Rademacher noise $\mathbf{Z}\sim\mathrm{Rad}^{\otimes n}(\beta)$
in fact can be regarded as a ``smoothing'' version (weighted sum,
or \textquoteleft binomial analogue\textquoteright ) of the sphere-noise
(compare the expressions in \eqref{eq:-28} and \eqref{eq:-11}),
and moreover, the corruption effect by the Rademacher noise is mainly
determined by the component (the sphere-noise) of radius around $n\beta$
(which is well-known in information theory). }

The quantity $\Gamma_{\mathrm{IID}}(\alpha,\beta)$ was widely studied
in the literature. Define $\rho:=1-2\beta$ and $\Lambda_{\rho}(\alpha)$
as the Gaussian quadrant probability defined by $\Lambda_{\rho}(\alpha)=\mathbb{P}[Z_{1}>t,Z_{2}>t]$,
where $Z_{1},Z_{2}$ are joint standard Gaussians with correlation
$\mathbb{E}[Z_{1}Z_{2}]=\rho$ and $t$ is a real number such that
$\mathbb{P}[Z_{1}>t]=\alpha$. The small-set expansion theorem on
\citep[p. 264]{O'Donnell14analysisof} states that 
\begin{align}
\Gamma_{\mathrm{IID}}(\alpha,\beta) & \le\alpha^{\frac{1}{1-\beta}}.\label{eq:-108}
\end{align}
On the other hand, for all $0<\alpha\le\frac{1}{2}$, Hamming balls
\citep[Exercise 5.32]{O'Donnell14analysisof} yield the lower bound
\begin{align}
\Gamma_{\mathrm{IID}}(\alpha,\beta) & \geq\Lambda_{\rho}(\alpha),\label{eq:-108-1}
\end{align}
and for $\alpha=2^{-k}$ with a positive integer $k$, Hamming subcubes
yield the lower bound
\begin{align}
\Gamma_{\mathrm{IID}}(\alpha,\beta) & \geq(\frac{1-\beta}{2})^{k}.\label{eq:-108-1-1}
\end{align}
 Combining Theorem \ref{thm:bsiid} with \eqref{eq:-108}-\eqref{eq:-108-1-1}
yields the following theorem. 
\begin{thm}[Hypercontractivity Bounds]
\label{thm:HCBound} For $0<\alpha,\beta\le\frac{1}{2}$, 
\begin{align*}
\Lambda_{\rho}(\alpha) & \leq\Gamma_{\mathrm{odd,S}}(\alpha,\beta)=\Gamma_{\mathrm{odd,B}}(\alpha,\beta)\leq\alpha^{\frac{1}{1-\beta}},\\
\frac{1}{2}\Lambda_{\rho}(2\alpha) & \leq\underline{\Gamma}_{\mathrm{even,S}}(\alpha,\beta)\leq\overline{\Gamma}_{\mathrm{even,S}}(\alpha,\beta)\leq2\alpha^{\frac{1}{1-\beta}},\\
\Lambda_{\rho}(\alpha) & \leq\underline{\Gamma}_{\mathrm{even,B}}(\alpha,\beta)\leq\overline{\Gamma}_{\mathrm{even,B}}(\alpha,\beta)\leq2(1-\beta)\alpha^{\frac{1}{1-\beta}}.
\end{align*}
 {Moreover, when $k=\log_{2}\frac{1}{\alpha}$ is a positive integer,
it also holds that 
\begin{align}
\Gamma_{\mathrm{odd,S}}(\alpha,\beta),\;\underline{\Gamma}_{\mathrm{even,B}}(\alpha,\beta) & \ge(\frac{1-\beta}{2})^{k}\quad\textrm{ and }\quad\underline{\Gamma}_{\mathrm{even,S}}(\alpha,\beta)\ge\frac{1}{2}(\frac{1-\beta}{2})^{k-1}.\label{eq:-32}
\end{align}}
\end{thm}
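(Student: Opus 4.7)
The plan is to observe that Theorem \ref{thm:HCBound} is essentially a direct corollary: it is obtained by plugging the known bounds \eqref{eq:-108}, \eqref{eq:-108-1}, and \eqref{eq:-108-1-1} on $\Gamma_{\mathrm{IID}}(\alpha,\beta)$ into the chain of inequalities proven in Theorem \ref{thm:bsiid}. So no new conceptual ingredient is needed; the task is to track the three cases (odd-sphere/ball, even-sphere, even-ball) and verify that the chains compose cleanly.

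First I would handle the odd case. Theorem \ref{thm:bsiid} gives the equality $\Gamma_{\mathrm{odd,S}}(\alpha,\beta) = \Gamma_{\mathrm{odd,B}}(\alpha,\beta) = \Gamma_{\mathrm{IID}}(\alpha,\beta)$. Combining with the small-set expansion upper bound \eqref{eq:-108} produces the upper bound $\alpha^{1/(1-\beta)}$, while the Hamming-ball lower bound \eqref{eq:-108-1} supplies $\Lambda_\rho(\alpha)$. For the subcube lower bound in the second part of the theorem (when $k=\log_2(1/\alpha)$ is an integer), \eqref{eq:-108-1-1} gives $\Gamma_{\mathrm{IID}}(\alpha,\beta) \ge ((1-\beta)/2)^k$, which transfers directly to $\Gamma_{\mathrm{odd,S}}$.

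Next I would handle the even-sphere case. From Theorem \ref{thm:bsiid}, $\overline{\Gamma}_{\mathrm{even,S}}(\alpha,\beta) \le 2\Gamma_{\mathrm{IID}}(\alpha,\beta)$, to which I apply \eqref{eq:-108} to obtain the upper bound $2\alpha^{1/(1-\beta)}$. For the lower bound, Theorem \ref{thm:bsiid} gives $\underline{\Gamma}_{\mathrm{even,S}}(\alpha,\beta) \ge \frac{1}{2}\Gamma_{\mathrm{IID}}(2\alpha,\beta)$; then \eqref{eq:-108-1} applied at $2\alpha$ yields $\frac{1}{2}\Lambda_\rho(2\alpha)$, while \eqref{eq:-108-1-1} applied at $2\alpha=2^{-(k-1)}$ yields $\frac{1}{2}((1-\beta)/2)^{k-1}$. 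The even-ball case is analogous: $\overline{\Gamma}_{\mathrm{even,B}} \le 2(1-\beta)\Gamma_{\mathrm{IID}}$ combined with \eqref{eq:-108} gives $2(1-\beta)\alpha^{1/(1-\beta)}$, and $\underline{\Gamma}_{\mathrm{even,B}} \ge \Gamma_{\mathrm{IID}}$ combined with \eqref{eq:-108-1} and \eqref{eq:-108-1-1} gives $\Lambda_\rho(\alpha)$ and $((1-\beta)/2)^k$ respectively.

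Since each assertion of Theorem \ref{thm:HCBound} is produced by a one-step substitution, there is no real obstacle — the only thing to be careful about is keeping the factor of two in the even-sphere lower bound aligned with the doubling of $\alpha$, so that $2\alpha = 2^{-(k-1)}$ is used in the subcube bound and $\Lambda_\rho$ is evaluated at $2\alpha$ rather than $\alpha$. All the substantive work (the equivalence $\Gamma_{\mathrm{odd,S}}=\Gamma_{\mathrm{odd,B}}=\Gamma_{\mathrm{IID}}$ and the sandwich bounds in the even cases) is deferred to Theorem \ref{thm:bsiid}, whose proof uses Fourier analysis to relate the ball/sphere kernels to the Rademacher kernel, and all the analytic content of the final bounds is deferred to the classical small-set expansion and Hamming ball/subcube extremizers.
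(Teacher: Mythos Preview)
Your proposal is correct and matches the paper's approach exactly: the paper states Theorem \ref{thm:HCBound} immediately after \eqref{eq:-108}--\eqref{eq:-108-1-1} with the single sentence ``Combining Theorem \ref{thm:bsiid} with \eqref{eq:-108}--\eqref{eq:-108-1-1} yields the following theorem,'' and your write-up simply spells out this combination case by case.
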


Similarly to the ball-noise case, when $\alpha=2^{-k}$ for a positive
integer $k$, Hamming subcubes $C_{n-k}$ attain the sphere-noise
stability $\alpha{n-k \choose r}/{n \choose r}$. However, for even
$r$, the even part (or odd part) of $C_{n-k+1}$ attains the sphere-noise
stability $\alpha[{n-k \choose r}+{n-k \choose r-1}]/{n \choose r}$,
which is strictly larger than the one by $C_{n-k}$. In particular,
for fixed $k$, as $n,r\to\infty$ and $r/n\to\beta$, it holds that
$\alpha[{n-k \choose r}+{n-k \choose r-1}]/{n \choose r}\to\frac{1}{2}(\frac{1-\beta}{2})^{k-1}$,
i.e., the second lower bound in \eqref{eq:-32}. Similarly, for even
$r$, the lower bound $\frac{1}{2}\Lambda_{\rho}(2\alpha)$ is asymptotically
achieved by the even part (or odd part) of a Hamming ball with volume
$2\alpha$. 

In fact, it was shown in \citep[Exercise 9.24]{O'Donnell14analysisof}
that given $\beta$, $\Lambda_{\rho}(\alpha)=\widetilde{\Theta}(\alpha^{\frac{1}{1-\beta}})$
as $\alpha\to0$. Hence, the bounds in Theorem \ref{thm:HCBound}
are asymptotically tight (up to a factor $(\log\frac{1}{\alpha})^{k}$
for some $k$) as $\alpha\to0$ for fixed $\beta$. By comparing the
LP bounds in Theorem \ref{thm:LPBound} and the hypercontractivity
bounds in Theorem \ref{thm:HCBound}, it is easy to see that the LP
bounds are tighter when $\alpha$ is close to $\frac{1}{2}$ and the
hypercontractivity bounds are tighter when $\alpha$ is close to $0$.

\section{\label{sec:Proof-of-Theorem-LPB}Proof of Theorem \ref{thm:LPBound}}

{\footnotesize{}{}{}}In this section, we apply Fourier analysis
combined with linear programming duality to prove Theorem \ref{thm:LPBound}.
We first introduce Fourier analysis and Krawtchouk polynomials, and
also derive new properties of Krawtchouk polynomials. By using Fourier
analysis, we then relax the edge-isoperimetric problem to a linear
program. By duality in linear programming, we then rewrite this program
as its dual. Finally, we find a feasible solution to the dual program
which hence provides a lower bound for the primal program. Such a
lower bound results in a lower bound for the edge-isoperimetric problem.

\subsection{\label{subsec:Fourier-Analysis-and}Fourier Analysis and Krawtchouk
Polynomials}

A subset $A$ is uniquely determined by its characteristics function
$1_{A}$. For this Boolean function $1_{A}$, the Fourier expansion
and Fourier weights are defined as follows. Consider the Fourier basis
$\{\chi_{S}\}_{S\subseteq[1:n]}$ with $\chi_{S}(\mathbf{x}):=\prod_{i\in S}x_{i}$
for $S\subseteq[1:n]$. Then for a function $f:\{-1,1\}^{n}\to\mathbb{R}$,
define its Fourier coefficients as 
\begin{align*}
 & \hat{f}_{S}:=\mathbb{E}_{\mathbf{X}\sim\mathrm{Unif}\{-1,1\}^{n}}[f(\mathbf{X})\chi_{S}(\mathbf{X})],\;S\subseteq[1:n].
\end{align*}
Then the Fourier expansion of the function $f$ (cf. \citep[Equation (1.6)]{O'Donnell14analysisof})
is $f(\mathbf{x})=\sum_{S\subseteq[1:n]}\hat{f}_{S}\chi_{S}(\mathbf{x}).$
The \emph{degree-$k$ Fourier weight }of $f$ is defined as $\mathbf{W}_{k}[f]:=\sum_{S:|S|=k}\hat{f}_{S}^{2},\;k\in[0:n].$
For brevity, we denote $\mathbf{W}_{k}[f]$ as $\mathbf{W}_{k}$.
By definition, it is easily seen that for $f=1_{A}$, $\mathbf{W}_{0}=\alpha^{2}\textrm{ and }\sum_{k=0}^{n}\mathbf{W}_{k}=\alpha,$
where $\alpha=|A|/2^{n}$. For a code $A\subseteq\{-1,1\}^{n}$, define
the \emph{scaled degree-$k$ Fourier weight} of $1_{A}$ as $Q^{(A)}(k):=\frac{1}{\alpha^{2}}\mathbf{W}_{k}.$
If $A$ is a linear code, then $Q^{(A)}$ is the distance distribution
of the dual of code $A$, and hence is also called the \emph{dual
distribution} of $A$. For details, please refer to \cite{macwilliams1977theory}.
By definition, 
\begin{equation}
Q^{(A)}(0)=1,\ \sum_{i=0}^{n}Q^{(A)}(i)=\frac{1}{\alpha},\ \textrm{ and }Q^{(A)}(i)\ge0\textrm{ for }i\in[0:n].\label{eq:Qproperty}
\end{equation}

For each $k\in[0:n]$ and indeterminate $x$, the Krawtchouk polynomials
\cite{macwilliams1977theory} are defined as\footnote{ { Here for a real number $x$ and an integer $j$, the (generalized)
binomial coefficients ${x \choose j}:=\frac{x(x-1)\cdots(x-j+1)}{j!}$
if $j>0$; ${x \choose j}:=1$ if $j=0$; and ${x \choose j}:=0$
if $j<0$. Obviously, by definition, ${n \choose j}\ge0$ for nonnegative
integer $n$. In particular, for nonnegative integer $n$, ${n \choose j}=0$
if $j<0$ or $n<j$. See more properties on \citet[pp. 13-14]{macwilliams1977theory}. }} 
\begin{equation}
K_{k}^{(n)}(x):=\sum_{j=0}^{k}(-1)^{j}{x \choose j}{n-x \choose k-j},\label{eq:-24}
\end{equation}
whose generating function satisfies 
\begin{equation}
\sum_{k=0}^{\infty}K_{k}^{(n)}(x)z^{k}=(1-z)^{x}(1+z)^{n-x}.\label{eq:-22-1}
\end{equation}
For brevity and if there is no ambiguity, we denote $K_{k}^{(n)}$
as $K_{k}$. It is well-known that (see, e.g., \cite{macwilliams1977theory})
\begin{align}
 & K_{0}(i)=1,\;\forall i\in[0:n];\nonumber \\
 & K_{k}(0)={n \choose k},\;K_{k}(1)={n \choose k}(1-\frac{2k}{n}),\nonumber \\
 & {K_{k}(2)={n \choose k}(1-\frac{4k(n-k)}{n(n-1)})\;\forall k\in[0:n];}\nonumber \\
 & \sum_{S\subseteq[1:n]}\chi_{S}(\mathbf{x})\chi_{S}(\mathbf{x}')z^{|S|}=(1-z)^{d_{\mathrm{H}}(\mathbf{x},\mathbf{x}')}(1+z)^{n-d_{\mathrm{H}}(\mathbf{x},\mathbf{x}')},\;\forall\mathbf{x},\mathbf{x}'\in\{-1,1\}^{n};\nonumber \\
 & K_{k}(d_{\mathrm{H}}(\mathbf{x},\mathbf{x}'))=\sum_{S:|S|=k}\chi_{S}(\mathbf{x})\chi_{S}(\mathbf{x}'),\;\forall k\in[0:n],\mathbf{x},\mathbf{x}'\in\{-1,1\}^{n}.\label{eq:-19}
\end{align}
Taking expectation for both sides of \eqref{eq:-19}, with respect
to $(\mathbf{X},\mathbf{X}')\sim\mathrm{Unif}^{\otimes2}(A)$, yields
the following relationship between $P^{(A)}$ and $Q^{(A)}$: 
\begin{align}
Q^{(A)}(k) & =\sum_{i=0}^{n}P^{(A)}(i)K_{k}(i)\textrm{ and }P^{(A)}(k)=\frac{1}{2^{n}}\sum_{i=0}^{n}Q^{(A)}(i)K_{k}(i).\label{eq:-23-1}
\end{align}
These are so-called MacWilliams--Delsarte identities \cite{macwilliams1977theory}.

We now provide the following extremal property of Krawtchouk polynomials.
The proof of Lemma \ref{lem:exactextreme} is provided in  \ref{sec:Proof-of-Lemma}. 
\begin{lem}
\label{lem:exactextreme} For an integer $n\ge1$, the following hold: 
\begin{enumerate}
\item For $0\le k,i\le n$, we have $K_{k}^{(n)}(0)\ge|K_{k}^{(n)}(i)|.$ 
\item For $0\le k\le\frac{n-1}{2}$ and $1\le i\le n-1$, we have $K_{k}^{(n)}(1)\ge|K_{k}^{(n)}(i)|.$ 
\item For $0\le k\le\tau(n)$ (defined in \eqref{eq:tau}) and $2\le i\le n-2$,
we have $K_{k}^{(n)}(2)\ge|K_{k}^{(n)}(i)|.$ 
\end{enumerate}
\end{lem}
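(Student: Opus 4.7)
The proof decomposes into three parts of increasing difficulty. My plan is to use the Fourier representation in the last line of \eqref{eq:-19}, the reciprocity relation ${n \choose i}K_{k}^{(n)}(i)={n \choose k}K_{i}^{(n)}(k)$ (which follows from that representation together with the symmetry between $\mathbf{x}$ and $\mathbf{x}'$), and the generating-function symmetry $K_{k}^{(n)}(n-i)=(-1)^{k}K_{k}^{(n)}(i)$ obtained from the substitution $z\mapsto-z$ in \eqref{eq:-22-1}.

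Part 1 is immediate. Fixing any $\mathbf{x},\mathbf{x}'\in\{-1,1\}^{n}$ with $d_{\mathrm{H}}(\mathbf{x},\mathbf{x}')=i$, each term $\chi_{S}(\mathbf{x})\chi_{S}(\mathbf{x}')$ lies in $\{\pm1\}$, so the triangle inequality applied to the Fourier representation gives $|K_{k}^{(n)}(i)|\le{n \choose k}=K_{k}^{(n)}(0)$.

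For part 2, the reciprocity relation rewrites the desired inequality as $|K_{i}^{(n)}(k)|/{n \choose i}\le K_{1}^{(n)}(k)/{n \choose 1}=(n-2k)/n$ for $1\le i\le n-1$. By the generating-function symmetry, $|K_{i}^{(n)}(k)|=|K_{n-i}^{(n)}(k)|$, so it suffices to treat $1\le i\le n/2$. I would then proceed by induction on $i$, using the telescoping identity
\[
K_{k}^{(n)}(i)-K_{k}^{(n)}(i+1)=2K_{k-1}^{(n-1)}(i),
\]
which follows by differencing $(1-z)^{i}(1+z)^{n-i}$ in the variable $i$. The base case $i=1$ is an equality, and the inductive step controls the increment via part 1 applied at parameters $(n-1,k-1)$. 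The hypothesis $k\le(n-1)/2$ enters to guarantee $K_{k}^{(n)}(1)>0$, so that the target bound has the correct sign.

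For part 3, the same reciprocity reduces the claim to $|K_{i}^{(n)}(k)|/{n \choose i}\le K_{2}^{(n)}(k)/{n \choose 2}=1-4k(n-k)/(n(n-1))$ on $2\le i\le n-2$. The explicit formula $K_{2}^{(n)}(k)=((n-2k)^{2}-n)/2$ converts this into a quadratic inequality in $k$, whose threshold is precisely the quantity $\tau(n)$ defined in \eqref{eq:tau}. The argument then has two stages: first, verify the boundary cases $i=2$ (and $i=n-2$ by symmetry) by direct comparison of the quadratic; second, dispose of the interior range $3\le i\le n-3$ by iterating the part 2 argument, now using a double application of the difference identity (yielding an increment controlled by $4K_{k-2}^{(n-2)}(i)$ and thence, via part 1 applied at $(n-2,k-2)$, by $4{n-2 \choose k-2}$). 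The main obstacle will be part 3: unlike in part 2 where the bound propagates cleanly from the boundary, the interior maxima of $|K_{k}^{(n)}(i)|$ can in principle exceed $K_{k}^{(n)}(2)$, and the hypothesis $k\le\tau(n)$ is exactly tuned to suppress these. The bookkeeping of the quadratic threshold together with the iterated telescoping is where I expect the technical effort to concentrate, whereas parts 1 and 2 admit very short arguments.
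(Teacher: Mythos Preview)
Your Part 1 is fine and in fact cleaner than the paper's generating-function argument.

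Part 2 has a genuine gap. After the telescoping identity $K_{k}^{(n)}(i+1)=K_{k}^{(n)}(i)-2K_{k-1}^{(n-1)}(i)$, the only information Part 1 at parameters $(n-1,k-1)$ gives you is $|K_{k-1}^{(n-1)}(i)|\le\binom{n-1}{k-1}$. Combining this with the inductive hypothesis $|K_{k}^{(n)}(i)|\le K_{k}^{(n)}(1)$ only yields $|K_{k}^{(n)}(i+1)|\le K_{k}^{(n)}(1)+2\binom{n-1}{k-1}$, which is too weak; nothing stops the increment from having the wrong sign at the moment $|K_{k}^{(n)}(i)|$ is near its putative maximum. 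The induction does not close. The paper instead expands $\sum_{k}[K_{k}^{(n)}(1)\pm K_{k}^{(n)}(i)]z^{k}$ via the generating function, and for odd $i$ obtains a coefficient $a_{i,k}$ that can be shown nonnegative for $k\le n/2$ by a pairing argument on binomial coefficients; the even-$i$ case is then recovered from the symmetry $K_{k}^{(n)}(n-i)=(-1)^{k}K_{k}^{(n)}(i)$ and the recursion $K_{k}^{(n)}=K_{k}^{(n-1)}+K_{k-1}^{(n-1)}$.

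Part 3 inherits the same defect, and there is an additional error: the threshold $\tau(n)$ is \emph{not} the root of $K_{2}^{(n)}(k)=0$. The quadratic $((n-2k)^{2}-n)/2$ vanishes at $k=(n-\sqrt{n})/2\approx n/2$, whereas $\tau(n)\approx n/4$. In the paper, $\tau(n)$ arises from a completely different mechanism: in the generating-function expansion of $K_{k}^{(n)}(2)-K_{k}^{(n)}(i)$ one factor is $(1-z)^{2}(1+z)^{n-i}$, whose $z^{j}$ coefficient $\binom{n-i}{j}+\binom{n-i}{j-2}-2\binom{n-i}{j-1}$ is nonnegative for $j\le\frac{1}{2}(n-i+2-\sqrt{n-i+2})$; symmetrizing in $i\leftrightarrow n-i$ and taking the worst case $i\approx n/2$ is what produces $\tau(n)$. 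Your iterated-difference plan controls the second difference by $4\binom{n-2}{k-2}$, but just as in Part 2 this bound on the increment cannot by itself bound the value, and it has no visible connection to the specific quantity $\tau(n)$.
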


Note that the upper threshold $\tau(n)$ in Statement 3 is not sharp.
Numerical simulation shows that the upper threshold can be sharpened
to a value close to $n/2$. Proving this seems not easy. However,
it can be proven when $n$ is sufficiently large; see the following
lemma. This lemma is just a strengthening of \citep[Lemma 3]{yu2019improved}
and the proof is similar to that of \citep[Lemma 3]{yu2019improved}.
Hence we omit the proof of the following lemma. 
\begin{lem}
\label{lem:asymextreme} Given a non-negative integer $i$ and $\delta>0$,
for all sufficiently large $n$, 
\begin{equation}
K_{k}^{(n)}(i)\ge|K_{k}^{(n)}(x)|,\;\forall k\in[\delta n:(\frac{1}{2}-\delta)n],x\in[i,n-i].\label{eq:asymbound}
\end{equation}
\end{lem}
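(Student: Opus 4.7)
The plan is to split the range $x\in[i,n-i]$ into a bounded near-boundary piece $[i,M]$ (together with its mirror $[n-M,n-i]$, which is handled by the symmetry $K_{k}^{(n)}(n-x)=(-1)^{k}K_{k}^{(n)}(x)$) and a complementary bulk piece $[M,n-M]$, where $M=M(i,\delta)$ is a large constant to be chosen. The target inequality $K_{k}^{(n)}(i)\geq|K_{k}^{(n)}(x)|$ is measured against the baseline estimate $K_{k}^{(n)}(i)/\binom{n}{k}\geq(1-2\beta)^{i}(1-o(1))\geq(2\delta)^{i}(1-o(1))$, valid uniformly for $\beta:=k/n\in[\delta,1/2-\delta]$; this asymptotic is the model for the other estimates below and comes directly from the explicit formula for Krawtchouk polynomials together with Stirling.

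For the near-boundary piece $x\in[i,M]$, I would use $K_{k}^{(n)}(x)=\sum_{j=0}^{x}(-1)^{j}\binom{x}{j}\binom{n-x}{k-j}$ and the elementary Stirling estimate $\binom{x}{j}\binom{n-x}{k-j}/\binom{n}{k}=\binom{x}{j}\beta^{j}(1-\beta)^{x-j}(1+O(M^{2}/n))$, uniform in $j\in[0,x]$, $x\in[i,M]$, and $\beta\in[\delta,1/2-\delta]$. Summing over $j$ yields $K_{k}^{(n)}(x)/\binom{n}{k}=(1-2\beta)^{x}+o(1)$ uniformly. Since $1-2\beta\in[2\delta,1-2\delta]\subset(0,1)$, the map $x\mapsto(1-2\beta)^{x}$ is strictly positive and strictly decreasing, with a uniform gap $(1-2\beta)^{i}-(1-2\beta)^{x}\geq 2\delta\cdot(2\delta)^{M}$ for $x\in[i+1,M]$. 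For $n$ large this gap dominates the $o(1)$ error and gives $K_{k}^{(n)}(i)\geq K_{k}^{(n)}(x)>0$, which is the desired inequality on this piece.

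For the bulk piece $x\in[M,n/2]$, I would establish the uniform decay $|K_{k}^{(n)}(x)|/\binom{n}{k}\leq(1-2\delta)^{M/2}$ via saddle-point analysis of the integral representation $K_{k}^{(n)}(x)=\frac{1}{2\pi i}\oint(1-z)^{x}(1+z)^{n-x}z^{-k-1}\,dz$. The saddle equation $(1-\beta)z^{2}-(1-2\xi)z+\beta=0$ with $\xi=x/n$ has a positive real saddle $z_{\ast}(\xi,\beta)\in(0,1)$ off the coalescence locus $|1-2\xi|=2\sqrt{\beta(1-\beta)}$, producing an exponential rate $I(\xi,\beta)$ with $I(\xi,\beta)\geq c(\delta)\,\xi$ uniformly on $\xi\in(0,1/2]$ and $\beta\in[\delta,1/2-\delta]$; this rate matches the near-boundary slope $-\log(1-2\beta)$ as $\xi\to 0^{+}$, so the bulk and boundary estimates dovetail. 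Choosing $M$ so that $(1-2\delta)^{M/2}<\tfrac{1}{2}(2\delta)^{i}$ then finishes the proof when combined with the baseline of paragraph~1. The main obstacle is this uniform-in-$(\xi,\beta)$ bulk estimate: the naive $L^{2}$ identity $\sum_{k}K_{k}^{(n)}(x)^{2}/\binom{n}{k}=2^{n}/\binom{n}{x}$ is only effective when $\xi$ is near $1/2$, and so is insufficient in the intermediate range $M\le x\le\eta n$ for small $\eta$, where a genuine saddle-point (or Plancherel--Rotach type) argument is needed. The extension beyond \cite[Lemma~3]{yu2019improved} is to let $\beta$ approach $1/2-\delta$, which requires verifying that the rate $I(\xi,\beta)$ remains strictly positive as the two saddles of the quadratic approach coalescence; away from $\beta=1/2$ the framework is exactly that of \cite{yu2019improved}.
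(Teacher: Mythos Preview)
The paper does not prove this lemma; it simply states that it is a strengthening of \cite[Lemma~3]{yu2019improved} with a similar proof, and omits the argument. So there is no in-paper proof to compare against, and your proposal must be judged on its own merits.

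Your two-regime strategy (near-boundary asymptotics for $x\in[i,M]\cup[n-M,n-i]$, saddle-point for the bulk) is standard for this kind of statement and is viable. The near-boundary piece is essentially Statement~1 of Lemma~\ref{lem:sphere-ball-ratio} in the paper, and your conclusion $K_k^{(n)}(x)/\binom{n}{k}=(1-2\beta)^x+O(M^2/n)$ uniformly for $x\le M$, $\beta\in[\delta,1/2-\delta]$ is correct. One cosmetic slip: the uniform gap is at least $2\delta\cdot(2\delta)^{i}$, not $2\delta\cdot(2\delta)^{M}$; your weaker bound still works because $M$ is fixed before sending $n\to\infty$, but you should state the sharper constant.

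The genuine work, as you identify, is the uniform bulk bound. Two cautions. First, the saddle-point rate $I(\xi,\beta)$ does satisfy $I(\xi,\beta)\sim -\xi\log(1-2\beta)$ as $\xi\downarrow 0$, which is what makes the boundary and bulk estimates dovetail; but to get the claimed inequality $|K_k^{(n)}(x)|/\binom{n}{k}\le(1-2\delta)^{M/2}$ for \emph{all} $x\in[M,n-M]$ you need this to hold also for $x$ of intermediate order (say $x\asymp\sqrt{n}$), where neither the fixed-$x$ Stirling expansion nor the large-deviation saddle-point directly applies. You will need the saddle-point error term to be controlled uniformly in $\xi\in(0,1/2]$, not just pointwise; this is true but must be argued (e.g., via a steepest-descent contour chosen uniformly, or by citing a Plancherel--Rotach asymptotic valid down to $\xi\to 0$). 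Second, as $\xi$ crosses the coalescence locus $1-2\xi=2\sqrt{\beta(1-\beta)}$ the real saddle splits into a complex-conjugate pair and the asymptotics become Airy-type; you should note that the magnitude does not increase through this transition (the oscillatory regime has strictly smaller $|K_k^{(n)}(x)|/\binom{n}{k}$ than the monotone regime), which is what ultimately closes the argument. Your final remark about ``letting $\beta$ approach $1/2-\delta$'' is not the issue: $\beta$ is bounded away from $1/2$ throughout, so coalescence of the two real saddles at $\beta=1/2$ never enters.
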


Combining Lemmas \ref{lem:exactextreme} and \ref{lem:asymextreme}
yields that given $\delta>0$, for all sufficiently large $n$, 
\begin{equation}
K_{k}^{(n)}(2)\ge|K_{k}^{(n)}(j)|,\;\forall k\in[0:(\frac{1}{2}-\delta)n],j\in[2,n-2].\label{eq:asymbound-1}
\end{equation}

\subsection{\label{subsec:Linear-Program-and}Linear Program and Its Dual}

By the MacWilliams--Delsarte identity \eqref{eq:-23-1}, 
\begin{equation}
\sum_{k=0}^{r}P^{(A)}(k)=\frac{1}{2^{n}}\sum_{i=0}^{n}Q^{(A)}(i)\sum_{k=0}^{r}K_{k}(i).\label{eq:-17}
\end{equation}
We define 
\begin{equation}
\omega_{i}^{(r)}:=\sum_{k=0}^{r}K_{k}(i)=K_{r}^{(n-1)}(i-1)\label{eq:w}
\end{equation}
(for the equality, see \citep[Equation (54)]{levenshtein1995krawtchouk}),
and for brevity, we also denote $\omega_{i}^{(r)}$ as $\omega_{i}$.
Then, 
\begin{equation}
\sum_{k=0}^{r}P^{(A)}(k)=\frac{1}{2^{n}}\sum_{i=0}^{n}Q^{(A)}(i)\omega_{i}.\label{eq:-20}
\end{equation}
Note that, in particular, $\omega_{0}={n \choose \le r}$ and $\omega_{1}={n-1 \choose r}$.
From \eqref{eq:Qproperty}, \eqref{eq:-23-1}, and $P^{(A)}(k)\ge0$,
the following properties of $Q^{(A)}$ hold: 
\begin{align}
 & Q^{(A)}(i)\ge0,\;i\in[0:n],\quad Q^{(A)}(0)=1,\quad\sum_{i=0}^{n}Q^{(A)}(i)=\frac{1}{\alpha},\label{eq:-77}\\
\textrm{ and } & \sum_{i=0}^{n}Q^{(A)}(i)K_{k}(i)\ge0,\;k\in[0:n].\label{eq:Q0-1}
\end{align}
Substituting \eqref{eq:-77} into \eqref{eq:-20}, we obtain 
\begin{align*}
\sum_{k=0}^{r}P^{(A)}(k) & =\frac{1}{2^{n}}[\omega_{0}+(\frac{1}{\alpha}-1-\sum_{i=2}^{n}Q^{(A)}(i))\omega_{1}+\sum_{i=2}^{n}Q^{(A)}(i)\omega_{i}]\\
 & =\frac{1}{2^{n}}[\omega_{0}+(\frac{1}{\alpha}-1)\omega_{1}-\sum_{i=2}^{n}Q^{(A)}(i)(\omega_{1}-\omega_{i})].
\end{align*}
We now consider a relaxed version of the minimization of $\sum_{i=2}^{n}Q^{(A)}(i)(\omega_{1}-\omega_{i})$
over the dual distance distribution $Q^{(A)}$. Instead of the discrete
optimization of $\sum_{i=2}^{n}Q^{(A)}(i)(\omega_{1}-\omega_{i})$
(since given $n$, there are only finitely many codes and the corresponding
dual distance distributions), we allow $(Q^{(A)}(0),Q^{(A)}(1),...,Q^{(A)}(n))$
to be any nonnegative vector $(u_{0},u_{1},...,u_{n})$ such that
\begin{align*}
 & u_{0}=1,u_{i}\ge0,\;i\in[2:n];\quad\sum_{i=0}^{n}u_{i}=\frac{1}{\alpha};\quad\sum_{i=0}^{n}u_{i}K_{k}(i)\ge0,\;k\in[0:n].
\end{align*}
Then in order to lower bound $\sum_{i=2}^{n}Q^{(A)}(i)(\omega_{1}-\omega_{i})$,
we consider the following linear program. 
\begin{problem}[Primal Problem]
\label{prob:Primal-Problem:}
\[
\Lambda_{n}(\alpha,r):=\min_{u_{2},u_{3},...,u_{n}}\sum_{i=2}^{n}u_{i}(\omega_{1}-\omega_{i})
\]
subject to the inequalities 
\begin{align*}
 & u_{i}\ge0,\;i\in[2:n];\\
 & \sum_{i=2}^{n}[K_{k}(1)-K_{k}(i)]u_{i}\le K_{k}(0)+K_{k}(1)(\frac{1}{\alpha}-1),\;k\in[1:n].
\end{align*}
\end{problem}

The dual is the following optimization problem. 
\begin{problem}[Dual Problem ]
\label{prob:Dual-Problem:}
\begin{equation}
\overline{\Lambda}_{n}(\alpha,r):=\max_{x_{1},x_{2},...,x_{n}}-\sum_{k=1}^{n}[K_{k}(0)+K_{k}(1)(\frac{1}{\alpha}-1)]x_{k}\label{eq:-55}
\end{equation}
subject to the inequalities 
\begin{align}
 & x_{k}\ge0,\;k\in[1:n];\nonumber \\
 & \sum_{k=1}^{n}[K_{k}(1)-K_{k}(i)]x_{k}\ge-(\omega_{1}-\omega_{i}),\;i\in[2:n].\label{eq:-122}
\end{align}
\end{problem}

By strong duality in linear programming,\footnote{Obviously, in the primal problem, since $u_{i}\ge0$, the primal problem
is bounded. On the other hand, the existence of a code $A$ with size
$M:=\alpha2^{n}$ ensures that $u_{i}=Q^{(A)}(i)$ is a feasible solution.
Hence the primal problem has an optimal solution.} $\Lambda_{n}(\alpha,r)=\overline{\Lambda}_{n}(\alpha,r)$. Therefore,
the following holds. 
\begin{thm}
\label{thm:-For-any-1} For any code $A$ of size $M$, $\sum_{i=2}^{n}Q^{(A)}(i)(\omega_{1}-\omega_{i})\geq\overline{\Lambda}_{n}(\alpha,r).$ 
\end{thm}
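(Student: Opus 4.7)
The plan is to show this is a direct consequence of linear programming weak duality once we verify that the dual distance distribution of any code furnishes a feasible point for the primal problem. Concretely, I would fix an arbitrary code $A\subseteq\{-1,1\}^n$ of size $M=\alpha 2^n$ and then substitute $u_i:=Q^{(A)}(i)$ for $i\in[2:n]$ into Problem~\ref{prob:Primal-Problem:}, checking that both families of constraints are satisfied.

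Nonnegativity $u_i\ge0$ is immediate from \eqref{eq:-77}. For the second family of constraints, I would start from \eqref{eq:Q0-1}, which asserts $\sum_{i=0}^{n}Q^{(A)}(i)K_k(i)\ge0$ for every $k\in[1:n]$, and use $Q^{(A)}(0)=1$ together with the normalization $Q^{(A)}(1)=\tfrac{1}{\alpha}-1-\sum_{i=2}^{n}Q^{(A)}(i)$ from \eqref{eq:-77} to eliminate $Q^{(A)}(1)$. Substituting yields
\begin{equation*}
K_k(0)+K_k(1)\Big(\tfrac{1}{\alpha}-1\Big)+\sum_{i=2}^{n}\big[K_k(i)-K_k(1)\big]\,Q^{(A)}(i)\ge 0,
\end{equation*}
which is exactly the inequality \eqref{eq:-122}-type constraint of Problem~\ref{prob:Primal-Problem:} after rearrangement. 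Thus $(Q^{(A)}(i))_{i=2}^n$ is primal-feasible, so
\begin{equation*}
\sum_{i=2}^{n}Q^{(A)}(i)(\omega_1-\omega_i)\ \ge\ \Lambda_n(\alpha,r).
\end{equation*}

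To finish, I would invoke strong duality in linear programming. Both programs are finite-dimensional LPs, the primal is bounded below (by $-\sum_{i=2}^{n}u_i\,\omega_i\ge -\tfrac{1}{\alpha}\max_i\omega_i$ using the $k=0$ constraint, or equivalently it is bounded since $u_i\ge 0$ and feasible by the above), and the existence of a code of size $M$ ensures primal feasibility, so the primal optimum is attained. Hence strong duality gives $\Lambda_n(\alpha,r)=\overline{\Lambda}_n(\alpha,r)$, and combining this with the previous display yields the stated bound.

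The only potentially subtle step is the translation from the MacWilliams–Delsarte nonnegativity \eqref{eq:Q0-1} (for all $k\in[0:n]$) into the primal constraints (indexed by $k\in[1:n]$); I would note that the $k=0$ case reduces to $\sum_iQ^{(A)}(i)=1/\alpha$, which is the identity used to eliminate $u_1$ rather than a genuine constraint, so no constraint is lost. No hard estimate is required here — this theorem is essentially bookkeeping that sets up the LP machinery to be analyzed (via an explicit dual-feasible solution) in the subsequent subsections.
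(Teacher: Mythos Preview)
Your proposal is correct and follows essentially the same route as the paper: the paper sets up the primal problem precisely so that $(Q^{(A)}(i))_{i=2}^{n}$ is feasible (this is the content of \eqref{eq:-77}--\eqref{eq:Q0-1} after eliminating $Q^{(A)}(1)$, exactly as you do), and then invokes strong LP duality with the same justification (primal feasibility from the existence of a code, boundedness from $u_i\ge 0$ together with $\omega_1-\omega_i\ge 0$). One cosmetic slip: the constraint you derive is the \emph{primal} constraint of Problem~\ref{prob:Primal-Problem:}, not the dual constraint \eqref{eq:-122}, so the cross-reference should be adjusted.
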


\subsection{\label{subsec:Linear-Programming-Bound}Linear Programming Bounds}

We next provide a lower bound for $\overline{\Lambda}_{n}(\alpha,r)$. 
\begin{thm}
\label{thm:LPB} For any code $A$ of size $M$, $\overline{\Lambda}_{n}(\alpha,r)\geq\psi_{n}^{+}(\alpha,r)$,
where $\psi_{n}^{+}$ was given in Theorem \ref{thm:LPBound}. 
\end{thm}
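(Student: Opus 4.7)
The plan is to use weak duality: exhibit, for each candidate $k^{*}$ appearing in the max in $\psi_{n}$, an explicit feasible point $(x_{1}^{*},\ldots,x_{n}^{*})$ of the dual program (Problem~\ref{prob:Dual-Problem:}) whose objective value equals the corresponding term under the max. Since the zero vector is trivially feasible with objective $0$, combining these per-$k^{*}$ lower bounds yields $\overline{\Lambda}_{n}(\alpha,r)\geq\psi_{n}^{+}(\alpha,r)$.

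For the first case (even $r\leq n/2-1$, odd $k^{*}\in[n-\tau(n):n]$), the ansatz is two-sparse: set $x_{k^{*}}^{*}=a$ and $x_{n}^{*}=b$ with all other $x_{k}^{*}=0$. The scalars $a$ and $b$ are uniquely determined by making the constraints at $i=n-1$, $i=2$, and $i=n$ tight. Using the identities $K_{k^{*}}(n-1)=(-1)^{k^{*}}K_{k^{*}}(1)=-K_{k^{*}}(1)$, $\omega_{n-1}=(-1)^{r}\omega_{2}=\omega_{2}$, and $K_{n}(i)=(-1)^{i}$, the binding constraint at $i=n-1$ gives $a=n\binom{n-2}{r-1}/[\binom{n}{k^{*}}(2k^{*}-n)]$; the bindings at $i=2$ and $i=n$ then consistently fix $b=(n-k^{*})\binom{n-2}{r-1}/(2k^{*}-n)$ (consistency of the two determinations of $b$ is one manifestation of the range restriction). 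The other three cases use analogous constructions with shifted binding indices---the odd-$r$ subcase uses $\omega_{n-1}=-\omega_{2}$, which shifts the denominator from $2k^{*}-n$ to $2k^{*}-n-1$; the $r>n/2-1$ subcases use a one-sparse construction with binding at $i\in\{n-1,n\}$---and the auxiliary set $F$ in the odd-$r$ subcase encodes a supplementary quadratic condition on $k^{*}$ needed for the parity-shifted binding constraint.

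The main obstacle is verifying feasibility of the proposed $(a,b)$ against all remaining constraints $i\in[2:n]\setminus\{2,n-1,n\}$. Substituting the explicit $a$ and $b$, each such constraint reduces to a Krawtchouk-polynomial inequality of the form $K_{k^{*}}(2)\geq|K_{k^{*}}(i)|$ (for even $r$) or $K_{k^{*}}(1)\geq|K_{k^{*}}(i)|$ (for odd $r$). By the symmetry $K_{n-k'}^{(n)}(i)=(-1)^{i}K_{k'}^{(n)}(i)$ with $k':=n-k^{*}$, these inequalities are equivalent to the extremal bounds of Lemma~\ref{lem:exactextreme}, and the range $k^{*}\in[n-\tau(n):n]$ in the definition of $\psi_{n}$ is exactly the image under this symmetry of the range $k'\in[0:\tau(n)]$ in which part~3 of Lemma~\ref{lem:exactextreme} applies; this is precisely why the threshold $\tau(n)$ appears in $\psi_{n}$. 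Finally, the objective value $-[K_{k^{*}}(0)+K_{k^{*}}(1)(\tfrac{1}{\alpha}-1)]a-[K_{n}(0)+K_{n}(1)(\tfrac{1}{\alpha}-1)]b$, computed using $K_{k^{*}}(0)=\binom{n}{k^{*}}$, $K_{k^{*}}(1)=\binom{n}{k^{*}}(n-2k^{*})/n$, $K_{n}(0)=1$, $K_{n}(1)=-1$, and the stated $a,b$, simplifies via the identity $r\binom{n-1}{r}=(n-1)\binom{n-2}{r-1}$ to the claimed expression under the max in~\eqref{eq:psi}.
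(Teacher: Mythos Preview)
Your overall plan (build explicit dual-feasible vectors, compute the objective, and take the max with the trivial zero solution) is the paper's plan too. The gap is in the specific ansatz for the first case. The paper does \emph{not} place weight at $k^{*}$ and $n$; it places equal weight at two \emph{consecutive} indices,
\[
x_{k}^{*}=x_{k+1}^{*}=\frac{n\binom{n-2}{r-1}}{\binom{n}{k}(2k-n)},\qquad k\text{ odd},\; n-\tau(n)\le k\le n-1,
\]
and makes the constraints at $i=2$ and $i=n$ tight (only two bindings for two unknowns). The point of the consecutive pair is that the identity $K_{j}(i)=(-1)^{i}K_{n-j}(i)$ turns the constraint into a sum of terms in $K_{n-k}$ and $K_{n-k-1}$, each of which is controlled on $[2:n-2]$ by Lemma~\ref{lem:exactextreme}, part~3; the $\omega$-contribution $K_{r}^{(n-1)}(i-1)-K_{r}^{(n-1)}(0)$ is handled separately by Lemma~\ref{lem:exactextreme}, part~2, using $r\le(n-2)/2$. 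So the feasibility check does not ``reduce to $K_{k^{*}}(2)\ge|K_{k^{*}}(i)|$'' alone; both parts of the lemma are used.

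Your ansatz at indices $k^{*}$ and $n$ does not work as stated. First, with two free scalars you cannot generically bind three constraints; indeed for odd $n$ the constraints at $i=2$ and $i=n-1$ (both even) force $K_{n-k^{*}}(1)=K_{n-k^{*}}(2)$, which fails for $k^{*}$ in the required range. Second, your formula $b=(n-k^{*})\binom{n-2}{r-1}/(2k^{*}-n)$ is wrong: solving the $i=2$ constraint with your $a$ (for even $n$) actually gives
\[
b=\frac{(n-k^{*})(2k^{*}-n+1)}{(n-1)(2k^{*}-n)}\binom{n-2}{r-1},
\]
which agrees with your expression only when $k^{*}=n-1$. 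Third, even with this corrected $b$ the objective value is not the expression under the max in \eqref{eq:psi} for general $k^{*}$; a direct check at $n=30$, $k^{*}=25$ gives $\binom{28}{r-1}\big[\tfrac{137}{116\alpha}-\tfrac{83}{29}\big]$ instead of the paper's $\binom{28}{r-1}\big[\tfrac{63}{52\alpha}-3\big]$. Your descriptions of the remaining three cases are also off: for odd $r\le n/2-1$ the paper uses a one-sparse solution (binding at $i=2$, then verifying $i=n-1,n$ separately, which is the origin of the set $F$), and for even $r>n/2-1$ it uses the monotonicity $\overline{\Lambda}_{n}(\alpha,r)\ge\overline{\Lambda}_{n}(\alpha,r+1)$, not a direct one-sparse construction.
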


The proof of Theorem \ref{thm:LPB} is provided in  \ref{sec:Proof-of-Theorem}.
In our proof, we constructed different feasible solutions for different
cases. For example, for even $r\le n/2-1$ our feasible solution is
$\mathbf{x}^{*}=(0,...,0,x_{k}^{*},x_{k+1}^{*},0,...,0)$ with $x_{k}^{*}=x_{k+1}^{*}=\frac{n{n-2 \choose r-1}}{{n \choose k}(2k-n)},$
where $k$ is an odd number such that $n-\tau(n)\le k\le n$. The
feasibility of this solution follows since, on one hand, such a solution
guarantees that equality holds in \eqref{eq:-122} for $i=2,n$; and
on the other hand, after substituting $\mathbf{x}^{*}$ into \eqref{eq:-122},
it can be found that \eqref{eq:-122} holds for $i\in[2:n]$ if and
only if it holds for $i=2,n$. Hence $\mathbf{x}^{*}$ is feasible.
It is easy to see that it leads to the bound in Theorem \ref{thm:LPB}
for even $r\le n/2-1$. Other cases are proven similarly.

If we focus on sufficiently large $n$, then we can obtain a better
bound, as shown in the following theorem. The proof of Theorem \ref{thm:AsymLPB}
is almost same as the proof of Theorem \ref{thm:LPB} except that
Lemma \ref{lem:asymextreme} (more preciously, the inequality \eqref{eq:asymbound-1}),
instead of Lemma \ref{lem:exactextreme}, is applied. 
\begin{thm}
\label{thm:AsymLPB}For any code $A$ of size $M$ and any $\delta>0$,
for sufficiently large $n$, the set ``$[n-\tau(n):n]$'' in the
first two clauses of $\psi_{n}(\alpha,r)$ in \eqref{eq:psi} can
be replaced with ``$[(\frac{1}{2}+\delta)n:n]$''. In particular,
when $n\to\infty$, for fixed $\delta>0$, 
\[
\overline{\Lambda}_{n}(\alpha,r)\geq\kappa_{n}(\alpha,r):=\begin{cases}
{n-2 \choose r-1}(\varphi(\alpha)+o_{n}(1)), & \textrm{even }r\le n(\frac{1}{2}-\delta)\\
{n-2 \choose r-1}(\hat{\varphi}(\alpha,\beta)+o_{n}(1)), & \textrm{odd }r\le n(\frac{1}{2}-\delta)\\
{n-2 \choose r+1}(\varphi(\alpha)+o_{n}(1)), & \textrm{even }r\geq n/2-1\\
{n-2 \choose r}(\varphi(\alpha)+o_{n}(1)), & \textrm{odd }r\geq n/2-1
\end{cases}
\]
holds for any $\alpha:=\frac{M}{2^{n}}\in[\delta,\frac{1}{2}],\,\beta:=r/n\in(0,\frac{1}{2}-\delta]\cup[\frac{1}{2},1]$,
where all the terms $o_{n}(1)$ are independent of $M,r$ (the ones
in first two clauses depend on $\delta$). 
\end{thm}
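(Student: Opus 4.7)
The plan is to follow the proof of Theorem \ref{thm:LPB} essentially verbatim, with one substitution: wherever the finite-$n$ Krawtchouk extremal property (Lemma \ref{lem:exactextreme}, Statement 3) is invoked to certify that the candidate dual solution $\mathbf{x}^{*}=(0,\dots,0,x_{k}^{*},x_{k+1}^{*},0,\dots,0)$ satisfies the constraints \eqref{eq:-122} for all $i\in[2:n]$, I would instead invoke its asymptotic strengthening \eqref{eq:asymbound-1}. Concretely, the original argument requires $K_{n-k}^{(n)}(2)\ge|K_{n-k}^{(n)}(i)|$ for $i\in[2,n-2]$, which by Statement 3 of Lemma \ref{lem:exactextreme} holds for $n-k\le\tau(n)$; replacing that lemma by \eqref{eq:asymbound-1} widens this to $n-k\le(\tfrac{1}{2}-\delta)n$ once $n$ is sufficiently large, so the admissible range of $k$ enlarges from $[n-\tau(n):n]$ to $[(\tfrac{1}{2}+\delta)n:n]$. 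This proves the first assertion of the theorem. The two clauses with $r>n/2-1$ need no change, since their feasibility does not rely on Statement 3 of Lemma \ref{lem:exactextreme}.

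For the asymptotic formula $\kappa_{n}$, I would parameterize $k=\eta n$ with $\eta\in(\tfrac{1}{2}+\delta,1]$ and substitute into the first clause of \eqref{eq:psi}. Up to a multiplicative factor $\binom{n-2}{r-1}$ and terms of order $o_{n}(1)$ (which are uniform since $\tfrac{n+1}{k+1}=\tfrac{1}{\eta}+O(\tfrac{1}{n})$ on the relevant range), the objective reduces to maximizing
\begin{equation*}
g(\eta):=\frac{1}{2\eta-1}\Bigl[2\bigl(\tfrac{1}{\alpha}-1\bigr)-\tfrac{1}{\alpha\eta}\Bigr]
\end{equation*}
over $\eta\in(\tfrac{1}{2},1]$. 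Setting $g'(\eta)=0$ produces the quadratic $4(1-\alpha)\eta^{2}-4\eta+1=0$ whose relevant root is $\eta^{*}=\tfrac{1}{2(1-\sqrt{\alpha})}$. For $\alpha<1/4$ this lies in $(\tfrac{1}{2},1)$ and direct substitution gives $g(\eta^{*})=\tfrac{2(1-\sqrt{\alpha})^{2}}{\alpha}$; for $\alpha\ge1/4$ the interior critical point exits the admissible interval, so the maximum is attained at the boundary $\eta=1$ with $g(1)=\tfrac{1}{\alpha}-2$. This exactly reproduces $\varphi(\alpha)$. For odd $r\le n/2-1$, the same parametrization applies but the set $F$ in \eqref{eq:F} imposes the asymptotic lower bound $\min\{\tfrac{1+\beta}{2},\tfrac{\beta}{1-\beta}\}$ on $\eta$; truncating the unconstrained optimizer at this threshold yields $\hat{\eta}$ and therefore $\hat{\varphi}(\alpha,\beta)$. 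The two clauses with $r\ge n/2-1$ are treated analogously, with only the binomial coefficient prefactor changing from $\binom{n-2}{r-1}$ to $\binom{n-2}{r+1}$ or $\binom{n-2}{r}$.

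The main technical point, rather than a true obstacle, is verifying that the $o_{n}(1)$ errors are uniform in $(\alpha,\beta)$ over $[\delta,\tfrac{1}{2}]\times((0,\tfrac{1}{2}-\delta]\cup[\tfrac{1}{2},1])$. This follows because \eqref{eq:asymbound-1} supplies a threshold $n$ depending only on $\delta$, and the remaining steps (approximating $\tfrac{n+1}{k+1}$ by $1/\eta$, extracting the leading binomial factor, and handling parity of $k$ in $F$) are elementary and uniform. The one piece requiring a careful case split is the location of $\eta^{*}$ relative to the admissible interval and, for odd $r$, relative to $\min\{\tfrac{1+\beta}{2},\tfrac{\beta}{1-\beta}\}$; but this is a direct calculus check, after which the rest of the argument is a transcription of the finite-$n$ proof with Lemma \ref{lem:exactextreme} replaced by Lemma \ref{lem:asymextreme}.
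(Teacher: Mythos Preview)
Your proposal is correct and follows exactly the approach the paper takes: the paper's entire proof sketch for Theorem \ref{thm:AsymLPB} is the single sentence that one repeats the proof of Theorem \ref{thm:LPB} verbatim, replacing the invocation of Lemma \ref{lem:exactextreme} (Statement 3) by the asymptotic inequality \eqref{eq:asymbound-1}, which is precisely what you do. Your additional calculus analysis (parametrizing $k=\eta n$, solving $4(1-\alpha)\eta^{2}-4\eta+1=0$, and handling the truncation by $F$) goes beyond what the paper spells out but correctly recovers $\varphi(\alpha)$ and $\hat{\varphi}(\alpha,\beta)$.
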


Theorems \ref{thm:LPB} and \ref{thm:AsymLPB} implies the following
linear programming bound on $\sum_{k=0}^{r}P^{(A)}(k)$. 
\begin{thm}[Linear Programming Bound]
\label{thm:ImprovedLPB} 1. For $\alpha=\frac{M}{2^{n}}\le\frac{1}{2}$,
\begin{align}
\sum_{k=0}^{r}P^{(A)}(k) & \leq\frac{1}{2^{n}}[\omega_{0}+(\frac{1}{\alpha}-1)\omega_{1}-\psi_{n}^{+}(\alpha,r)].\label{eq:-21}
\end{align}
2. When considering the asymptotic case as $n\to\infty$, $\psi_{n}^{+}(\alpha,r)$
in \eqref{eq:-21} can be replaced by $\kappa_{n}(\alpha,r)$. 
\end{thm}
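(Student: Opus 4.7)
The plan is that this theorem is essentially a bookkeeping step: the substantive content has already been carried out in Theorems~\ref{thm:-For-any-1}, \ref{thm:LPB}, and \ref{thm:AsymLPB}. What remains is to stitch together the identity for $\sum_{k=0}^{r}P^{(A)}(k)$ derived in Section~\ref{subsec:Linear-Program-and} with those LP lower bounds.

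First, I would recall the identity, already established just before the statement of Problem~\ref{prob:Primal-Problem:}, namely
\[
\sum_{k=0}^{r}P^{(A)}(k) \;=\; \frac{1}{2^{n}}\Big[\omega_{0} + \big(\tfrac{1}{\alpha}-1\big)\omega_{1} - \sum_{i=2}^{n} Q^{(A)}(i)\,(\omega_{1}-\omega_{i})\Big].
\]
This identity is an immediate consequence of the MacWilliams--Delsarte relation \eqref{eq:-23-1}, the properties \eqref{eq:-77} of the dual distribution, and the definition \eqref{eq:w} of $\omega_{i}$. Hence the theorem is equivalent to proving a \emph{lower} bound on the correction term $\sum_{i=2}^{n} Q^{(A)}(i)(\omega_{1}-\omega_{i})$ that is uniform over all codes $A$ of size $M$.

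Second, I would appeal directly to Theorem~\ref{thm:-For-any-1}, which says exactly that $\sum_{i=2}^{n} Q^{(A)}(i)(\omega_{1}-\omega_{i}) \geq \overline{\Lambda}_{n}(\alpha,r)$ for every such $A$ (this is strong LP duality applied to the primal/dual pair in Problems~\ref{prob:Primal-Problem:}--\ref{prob:Dual-Problem:}, combined with the fact that $u_{i}=Q^{(A)}(i)$ is a primal feasible point). For Part~1, I then invoke Theorem~\ref{thm:LPB} to obtain $\overline{\Lambda}_{n}(\alpha,r)\geq \psi_{n}^{+}(\alpha,r)$; substituting into the displayed identity yields exactly \eqref{eq:-21}. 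For Part~2, the only change is to invoke Theorem~\ref{thm:AsymLPB} instead, replacing $\psi_{n}^{+}(\alpha,r)$ with $\kappa_{n}(\alpha,r)$ for sufficiently large $n$ uniformly over $\alpha\in[\delta,\tfrac{1}{2}]$ and $\beta\in(0,\tfrac{1}{2}-\delta]\cup[\tfrac{1}{2},1]$.

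There is no real obstacle in this step, since the heavy lifting (constructing explicit dual-feasible solutions $\mathbf{x}^{*}$ case-by-case using Lemma~\ref{lem:exactextreme}, or using the asymptotic inequality \eqref{eq:asymbound-1} from Lemma~\ref{lem:asymextreme} in the asymptotic regime) is already done in the proofs of Theorems~\ref{thm:LPB} and~\ref{thm:AsymLPB}. The one point worth spelling out explicitly is that the positive-part $[\cdot]^{+}$ in $\psi_{n}^{+}$ is consistent with the trivial bound $\overline{\Lambda}_{n}(\alpha,r)\geq 0$: by Lemma~\ref{lem:exactextreme}.1 applied with parameter $n{-}1$ at the argument $0$, one has $\omega_{1}=K_{r}^{(n-1)}(0)\geq |K_{r}^{(n-1)}(i-1)|=|\omega_{i}|$ for all $i\in[1:n]$, so $\omega_{1}-\omega_{i}\geq 0$, and therefore $\mathbf{x}=\mathbf{0}$ is feasible in Problem~\ref{prob:Dual-Problem:} with dual objective value $0$. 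This justifies the replacement $\psi_{n}\mapsto [\psi_{n}]^{+}$ without loss.
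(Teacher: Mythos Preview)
Your proposal is correct and follows essentially the same approach as the paper: the paper states Theorem~\ref{thm:ImprovedLPB} as an immediate consequence of Theorems~\ref{thm:LPB} and~\ref{thm:AsymLPB} combined with the identity derived before Problem~\ref{prob:Primal-Problem:}, and you have simply spelled out this chain explicitly. Your remark on the positive-part via the feasibility of $\mathbf{x}=\mathbf{0}$ matches the paper's own justification at the start of the proof of Theorem~\ref{thm:LPB} in Appendix~\ref{sec:Proof-of-Theorem}.
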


In the perspective of ball-noise stability, Theorem \ref{thm:ImprovedLPB}
can be rewritten as the bounds in Theorem \ref{thm:LPBound}. In particular,
for the asymptotic case, it follows by the fact that for fixed $\delta>0$,
$\frac{{n \choose r}}{{n \choose \le r}}\to\frac{1-2\beta}{1-\beta}$
uniformly for all $\beta:=r/n\in[\delta,\frac{1}{2}-\delta]$; see
Lemma \ref{lem:sphere-ball-ratio} in the next section. This completes
the proof of Theorem \ref{thm:LPBound}.

\section{\label{sec:Proof-of-Theorem-HCB}Proof of Theorem \ref{thm:bsiid} }

In this section, we apply a probabilistic approach to prove Theorem
\ref{thm:bsiid}. A similar approach was also used by Polyanskiy to
study the hypercontractivity phenomenon under the sphere noise \cite{polyanskiy2019hypercontractivity}.

Similar to the i.i.d.-noise stability, the sphere- or ball-noise stability
of a function can be also expressed in terms of Fourier weights of
this function. For a random noise $\mathbf{Z}\in\{-1,1\}^{n}$, let
$\mathbb{T}$ be a noise operator such that for a function $f:\{-1,1\}^{n}\to\mathbb{R}$,
$[\mathbb{T}f](\mathbf{x})=\mathbb{E}[f(\mathbf{x}\circ\mathbf{Z})]$.
Then it is easy to verify that $\mathbb{T}\chi_{S}=(2\mathbb{P}[\chi_{S}(\mathbf{Z})=1]-1)\chi_{S}.$
Hence, for any function $f:\{-1,1\}^{n}\to\mathbb{R}$, 
\begin{equation}
\mathbb{T}f=\mathbb{T}\sum_{S\subseteq[1:n]}\hat{f}_{S}\chi_{S}=\sum_{S\subseteq[1:n]}\hat{f}_{S}\mathbb{T}\chi_{S}=\sum_{S\subseteq[1:n]}(2\mathbb{P}[\chi_{S}(\mathbf{Z})=1]-1)\hat{f}_{S}\chi_{S},\label{eq:-25}
\end{equation}
which further implies that 
\[
\mathbb{E}[f(\mathbf{X})f(\mathbf{Y})]=\langle f,\mathbb{T}f\rangle=\sum_{S\subseteq[1:n]}(2\mathbb{P}[\chi_{S}(\mathbf{Z})=1]-1)\hat{f}_{S}^{2}
\]
where $\mathbf{Y}=\mathbf{X}\circ\mathbf{Z}$. In particular, if $\mathbf{Z}\sim\mathrm{Unif}(B_{r})$,
i.e., $\mathbb{T}$ corresponds to the ball-noise operator $\mathbb{B}_{r}$,
then 
\[
\mathbb{P}[\chi_{S}(\mathbf{Z})=1]=\mathbb{P}[\prod_{i\in S}Z_{i}=1]=\mathbb{P}[\prod_{i=1}^{|S|}Z_{i}=1],
\]
where the last equality follows since the distribution of $\mathbf{Z}$
is invariant under the permutation operation. Hence, for ball-noise
operator $\mathbb{B}_{r}$, 
\begin{align}
\mathbf{BStab}_{r}[f] & =\langle f,\mathbb{B}_{r}f\rangle=\sum_{k=0}^{n}(2\mathbb{P}[\prod_{i=1}^{k}Z_{i}=1]-1)\mathbf{W}_{k}.\label{eq:-99}
\end{align}
For sphere noise operator $\mathbb{S}_{r}$, $\mathbf{SStab}_{r}[f]$
admits the same expression above, but in which $\mathbf{Z}\sim\mathrm{Unif}(S_{r})$.
We next provide the exact and asymptotic expressions for the coefficients
$2\mathbb{P}[\prod_{i=1}^{k}Z_{i}=1]-1$ for sphere noise and ball
noise. 
\begin{lem}
\label{lem:propertyBallSphNoise}1. For $\mathbf{Z}\sim\mathrm{Unif}(S_{r})$,
\begin{align}
2\mathbb{P}[\prod_{i=1}^{k}Z_{i}=1]-1 & =\frac{K_{r}(k)}{{n \choose r}}.\label{eq:-100}
\end{align}
For fixed $k$ and $\delta>0$, as $n\to\infty$, 
\begin{align}
2\mathbb{P}[\prod_{i=1}^{k}Z_{i}=1]-1 & \to(1-2\beta)^{k}\textrm{ and }\max_{j\in[k:n-k]}|2\mathbb{P}[\prod_{i=1}^{j}Z_{i}=1]-1|\to(1-2\beta)^{k}\label{eq:-112}
\end{align}
uniformly for all $\beta:=r/n\in[\delta,1/2-\delta]$.\\
 2. For $\mathbf{Z}\sim\mathrm{Unif}(B_{r})$, 
\begin{align}
2\mathbb{P}[\prod_{i=1}^{k}Z_{i}=1]-1 & =\frac{K_{r}^{(n-1)}(k-1)}{{n \choose \le r}}.\label{eq:-100-2}
\end{align}
For fixed $k$ and $\delta>0$, as $n\to\infty$, 
\begin{align*}
2\mathbb{P}[\prod_{i=1}^{k}Z_{i}=1]-1 & \to(1-2\beta)^{k}\textrm{ and }\max_{j\in[k:n-k+1]}|2\mathbb{P}[\prod_{i=1}^{j}Z_{i}=1]-1|\to(1-2\beta)^{k}
\end{align*}
uniformly for all $\beta:=r/n\in[\delta,1/2-\delta]$. 
\end{lem}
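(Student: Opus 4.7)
The plan is to recognize $2\mathbb{P}[\prod_{i=1}^{k}Z_{i}=1]-1$ as $\mathbb{E}[\chi_{[1:k]}(\mathbf{Z})]$ (since $\chi_{[1:k]}(\mathbf{Z})\in\{-1,+1\}$), evaluate it via the hypergeometric law of $J':=|\{i\in[1:k]:Z_{i}=-1\}|$, and then deduce the asymptotics from Lemma~\ref{lem:asymextreme} together with a routine term-by-term limit of Krawtchouk coefficients.

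For Part~1, I would observe that under $\mathbf{Z}\sim\mathrm{Unif}(S_{r})$ the random variable $J'$ is hypergeometric with $\mathbb{P}[J'=j]=\binom{k}{j}\binom{n-k}{r-j}/\binom{n}{r}$ and $\prod_{i=1}^{k}Z_{i}=(-1)^{J'}$. Summing $(-1)^{j}\mathbb{P}[J'=j]$ reproduces the Krawtchouk sum \eqref{eq:-24}, yielding $\mathbb{E}[\chi_{[1:k]}(\mathbf{Z})]=K_{r}^{(n)}(k)/\binom{n}{r}$, i.e.\ \eqref{eq:-100}. For Part~2, I would condition on $S:=|\{i\in[1:n]:Z_{i}=-1\}|$, whose law under $\mathrm{Unif}(B_{r})$ is $\mathbb{P}[S=s]=\binom{n}{s}/\binom{n}{\le r}$ for $s\in[0:r]$ with $\mathbf{Z}\mid\{S=s\}\sim\mathrm{Unif}(S_{s})$. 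Applying Part~1 conditionally and summing collapses the $\binom{n}{s}$ factors,
\[
\mathbb{E}[\chi_{[1:k]}(\mathbf{Z})]=\frac{1}{\binom{n}{\le r}}\sum_{s=0}^{r}K_{s}^{(n)}(k)=\frac{\omega_{k}^{(r)}}{\binom{n}{\le r}}=\frac{K_{r}^{(n-1)}(k-1)}{\binom{n}{\le r}},
\]
where the last step is the identity recorded in \eqref{eq:w}.

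For the asymptotic limits, I would fix $j,k$ and $r=\lfloor\beta n\rfloor$, then write $\binom{n-k}{r-j}/\binom{n}{r}$ as a product of rational factors in $n$; these converge to $\beta^{j}(1-\beta)^{k-j}$ uniformly on $\beta\in[\delta,1/2-\delta]$ because the limit is a continuous function of $\beta$ on this compact interval. The binomial theorem then gives $K_{r}^{(n)}(k)/\binom{n}{r}\to(1-2\beta)^{k}$ for Part~1, and the analogous computation on $K_{r}^{(n-1)}(k-1)/\binom{n-1}{r}$ produces $(1-2\beta)^{k-1}$; multiplying by $\binom{n-1}{r}/\binom{n}{\le r}\to(1-\beta)\cdot(1-2\beta)/(1-\beta)=1-2\beta$ (the right-hand factor coming from Lemma~\ref{lem:sphere-ball-ratio}) completes Part~2. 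The ``$\max$'' assertions follow immediately from Lemma~\ref{lem:asymextreme}: for $r/n\in[\delta,1/2-\delta]$ and $n$ large, $|K_{r}^{(n)}(j)|\le K_{r}^{(n)}(k)$ holds uniformly for $j\in[k:n-k]$, and the analogous bound for $K_{r}^{(n-1)}(\cdot)$ with $i=k-1$, $x=j-1$ handles the ball case.

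I expect no substantial obstacle; the main point of care is confirming that each of the finitely many convergences above is uniform in $\beta\in[\delta,1/2-\delta]$, which reduces to the observation that the relevant rational functions of $n$ have limits continuous in $\beta$ on this compact interval and that the threshold in Lemma~\ref{lem:asymextreme} is uniform in $\beta$ there.
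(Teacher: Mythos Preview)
Your proposal is correct and follows essentially the same outline as the paper's proof: both identify $2\mathbb{P}[\prod_{i=1}^{k}Z_{i}=1]-1=\mathbb{E}[\chi_{[1:k]}(\mathbf{Z})]$, establish the exact Krawtchouk formula, then obtain the limits via Lemma~\ref{lem:sphere-ball-ratio} and the ``max'' statements via Lemma~\ref{lem:asymextreme}. The one notable difference is in deriving \eqref{eq:-100}: the paper applies the Fourier identity \eqref{eq:-19} with $\mathbf{x}'=\mathbf{1}$, $\mathbf{x}=\mathbf{Z}$ to get $K_{k}(r)=\binom{n}{k}\mathbb{E}[\chi_{[1:k]}(\mathbf{Z})]$ and then invokes the Krawtchouk symmetry $\binom{n}{r}K_{k}(r)=\binom{n}{k}K_{r}(k)$, whereas you compute the hypergeometric law of $J'$ directly and recognize $\sum_{j}(-1)^{j}\binom{k}{j}\binom{n-k}{r-j}$ as the very definition \eqref{eq:-24} of $K_{r}^{(n)}(k)$. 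Your route is slightly more elementary since it avoids the symmetry relation; the paper's route has the advantage of making the connection to the Fourier basis explicit. Your conditioning argument for \eqref{eq:-100-2} via $\omega_{k}^{(r)}$ and \eqref{eq:w} is a clean way to handle the ball case, which the paper leaves to the reader.
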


\begin{proof}
We first prove Statement 1. Assume $\mathbf{Z}\sim\mathrm{Unif}(S_{r})$.
By \eqref{eq:-19} with $\mathbf{x}'\leftarrow\mathbf{1}$ and $\mathbf{x}\leftarrow\mathbf{Z}$,
we have 
\begin{equation}
\mathbb{E}[K_{k}(d_{\mathrm{H}}(\mathbf{Z},\mathbf{1}))]=\mathbb{E}[\sum_{S:|S|=k}\chi_{S}(\mathbf{Z})\chi_{S}(\mathbf{1})].\label{eq:-19-1-1}
\end{equation}
The right side of \eqref{eq:-19-1-1} is equal to $\sum_{S:|S|=k}\mathbb{E}[\chi_{S}(\mathbf{Z})]={n \choose k}(2\mathbb{P}[\prod_{i=1}^{k}Z_{i}=1]-1).$
The left side of \eqref{eq:-19-1-1} is equal to $K_{k}(r)=\frac{{n \choose k}}{{n \choose r}}K_{r}(k)$,
since ${n \choose i}K_{k}(i)={n \choose k}K_{i}(k)$ holds for any
nonnegative integers $i,k$. Hence, we obtain \eqref{eq:-100}.
\begin{lem}
\label{lem:sphere-ball-ratio}1. For fixed $k\ge i\ge0$, $\frac{{n-k \choose r-i}}{{n \choose r}}\to\beta^{i}(1-\beta)^{k-i}$
uniformly for all $\beta:=r/n\in[0,1]$.\\
 2. For fixed $\delta>0$, $\frac{{n \choose \le r}}{{n \choose r}}\to\sum_{j=0}^{r}(\frac{\beta}{1-\beta})^{j}$
uniformly for all $\beta:=r/n\in[0,\frac{1}{2}-\delta]$.\\
 3. For fixed $k\ge i\ge0$ and $\delta>0$, $\frac{{n-k \choose \leq r-i}}{{n \choose \le r}}\to\beta^{i}(1-\beta)^{k-i}$
uniformly for all $\beta:=r/n\in[\delta,\frac{1}{2}-\delta]$. 
\end{lem}

The proof of Lemma \ref{lem:sphere-ball-ratio} is provided in \ref{sec:Proof-of-Lemma-2}.

When $\mathbf{Z}\sim\mathrm{Unif}(S_{r})$, by definition, $\mathbb{P}[\prod_{i=1}^{k}Z_{i}=1]=\frac{\sum_{j=0}^{\lfloor k/2\rfloor}{k \choose 2j}{n-k \choose r-2j}}{{n \choose r}}.$
By Statement 1 of Lemma \ref{lem:sphere-ball-ratio}, for fixed $k$,
\begin{equation}
\mathbb{P}[\prod_{i=1}^{k}Z_{i}=1]\to\sum_{j=0}^{\lfloor k/2\rfloor}{k \choose 2j}\beta^{2j}(1-\beta)^{k-2j}=\frac{1+(1-2\beta)^{k}}{2}\label{eq:-9}
\end{equation}
uniformly for all $\beta:=r/n\in[0,1]$.  {The equality
above follows by the following interpretation of the middle term in
\eqref{eq:-9}. Consider $Y^{(k)}=\sum_{i=1}^{k}X_{i}$ with i.i.d.
$X_{i}\sim\mathrm{Bern}(\beta)$ (Bernoulli distribution on $\{0,1\}$
with $1$ having mass $\beta$) and denote $p_{k}:=\mathbb{P}[Y^{(k)}\textrm{ is even}]$
and $q_{k}:=\mathbb{P}[Y^{(k)}\textrm{ is odd}]$. Then, the middle
term in \eqref{eq:-9} is just $p_{k}$. Obviously, $p_{k},q_{k}$
satisfy the following recursive relations: $p_{k}=(1-\beta)p_{k-1}+\beta q_{k-1}$
and $q_{k}=(1-\beta)q_{k-1}+\beta p_{k-1}$, which imply $p_{k}-q_{k}=(1-2\beta)(p_{k-1}-q_{k-1})=...=(1-2\beta)^{k}$.
Combining this with $p_{k}+q_{k}=1$ yields the equality in \eqref{eq:-9}.
Obviously, \eqref{eq:-9} is just restatement of the first convergence
result  in \eqref{eq:-112}. That is, we complete the proof of the
first convergence result  in \eqref{eq:-112}. }

The second convergence result in \eqref{eq:-112} follows from the
first one in \eqref{eq:-112}, \eqref{eq:-100}, and Lemma \ref{lem:asymextreme}.
This completes the proof of Statement 1.

Statement 2 follows similarly, and hence the proof is omitted here. 
\end{proof}
Substituting \eqref{eq:-100-2} into \eqref{eq:-99}, we obtain 
\[
\mathbf{BStab}_{r}[f]=\frac{1}{{n \choose \le r}}\sum_{k=0}^{n}\mathbf{W}_{k}K_{r}^{(n-1)}(k-1).
\]
Similarly, 
\begin{align}
\mathbf{SStab}_{r}[f] & =\frac{1}{{n \choose r}}\sum_{k=0}^{n}\mathbf{W}_{k}K_{r}^{(n)}(k).\label{eq:-3}
\end{align}
In contrast, note that \citep[Theorem 2.49]{O'Donnell14analysisof}
for i.i.d. noise, 
\begin{align}
\mathbf{Stab}_{\beta}[f] & =\sum_{k=0}^{n}\mathbf{W}_{k}(1-2\beta)^{k}.\label{eq:-2}
\end{align}

By Lemma \ref{lem:propertyBallSphNoise}, we obtain several relationships
between $\mathbf{SStab}_{r}[f_{n}]$, $\mathbf{BStab}_{r}[f_{n}]$,
and $\mathbf{Stab}_{\beta}[f_{n}]$ for a sequence of functions $\{f_{n}\}$.
Before introducing these relationships, we first introduce a new concept
on the tail behavior of Fourier weights. 
\begin{defn}
\label{def:Fourierweightstable} A sequence of functions $\{f_{n}:\{-1,1\}^{n}\to\mathbb{R}\}_{n=1}^{\infty}$
is called \emph{Fourier-weight stable} if $\lim_{n\to\infty}\sum_{k=n-k_{0}}^{n}\mathbf{W}_{k}[f_{n}]=0, \forall  k_{0} \ge 0$ (or equivalently, $\lim_{k_{0}\to\infty}\lim_{n\to\infty}\sum_{k=n-k_{0}}^{n}\mathbf{W}_{k}[f_{n}]=0$).
\end{defn}

\begin{thm}
\label{thm:1)-Let-}1. Let $\{f_{n}\}$ be a sequence of  {nonnegative}
(not necessarily Boolean) functions with bounded $L^{2}$-norm, i.e.,
$\limsup_{n\to\infty}\mathbb{E}[f_{n}^{2}(\mathbf{X})]<\infty$. Let
$\delta>0$. Given $n$, let $r\in\mathbb{N}$ such that $\beta:=r/n\in[\delta,1/2-\delta]$.
Then, for even $r$, 
\begin{align}
\mathbf{Stab}_{\beta}[f_{n}]+o_{n}(1)\leq\mathbf{SStab}_{r}[f_{n}] & \leq2\mathbf{Stab}_{\beta}[f_{n}]+o_{n}(1),\label{eq:sphereeven}\\
\mathbf{Stab}_{\beta}[f_{n}]+o_{n}(1)\leq\mathbf{BStab}_{r}[f_{n}] & \leq2(1-\beta)\mathbf{Stab}_{\beta}[f_{n}]+o_{n}(1),\label{eq:balleven}
\end{align}
and for odd $r$, 
\begin{align}
0\leq\mathbf{SStab}_{r}[f_{n}] & \leq\mathbf{Stab}_{\beta}[f_{n}]+o_{n}(1),\label{eq:sphereodd}\\
2\beta\mathbf{Stab}_{\beta}[f_{n}]+o_{n}(1)\leq\mathbf{BStab}_{r}[f_{n}] & \leq\mathbf{Stab}_{\beta}[f_{n}]+o_{n}(1).\label{eq:ballodd}
\end{align}
Here, the terms $o_{n}(1)$ in \eqref{eq:sphereeven}-\eqref{eq:ballodd}
are independent of $r$ (given $n$), but dependent of $\delta$.
\\
2. Any sequence of  {nonnegative} functions $f_{n}$
supported on a subset of $\{\mathbf{x}:d_{\mathrm{H}}(\mathbf{x},\mathbf{1})\textrm{ is even}\}$
(or $\{\mathbf{x}:d_{\mathrm{H}}(\mathbf{x},\mathbf{1})\textrm{ is odd}\}$)
attains the upper bounds in \eqref{eq:sphereeven} and \eqref{eq:balleven}
for even $r$, and the lower bounds in \eqref{eq:sphereodd} and \eqref{eq:ballodd}
for odd $r$. \\
3. Any Fourier-weight stable sequence of  {nonnegative}
functions $f_{n}$ (e.g., Hamming subcubes or Hamming balls) attains
the lower bounds in \eqref{eq:sphereeven} and \eqref{eq:balleven}
for even $r$, and the upper bounds in \eqref{eq:sphereodd} and \eqref{eq:ballodd}
for odd $r$.
\end{thm}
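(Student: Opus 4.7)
The plan is to reduce all three stabilities to sums against Fourier weights via \eqref{eq:-99}, \eqref{eq:-3}, and \eqref{eq:-2}, and then compare the resulting coefficient sequences. Write $\lambda_{k}^{\mathrm{S}}:=K_{r}^{(n)}(k)/\binom{n}{r}$, $\lambda_{k}^{\mathrm{B}}:=K_{r}^{(n-1)}(k-1)/\binom{n}{\le r}$, and $\mu_{k}:=(1-2\beta)^{k}$. Lemma \ref{lem:propertyBallSphNoise} gives $\lambda_{k}^{\mathrm{S}},\lambda_{k}^{\mathrm{B}}\to\mu_{k}$ for every fixed $k$, uniformly in $\beta\in[\delta,1/2-\delta]$. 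Combining this with the Krawtchouk symmetry $K_{r}^{(n)}(n-k)=(-1)^{r}K_{r}^{(n)}(k)$ and with $\binom{n-1}{r}/\binom{n}{\le r}\to1-2\beta$ from Lemma \ref{lem:sphere-ball-ratio} yields the mirror limits $\lambda_{n-j}^{\mathrm{S}}\to(-1)^{r}\mu_{j}$ and $\lambda_{n-j}^{\mathrm{B}}\to(-1)^{r}(1-2\beta)\mu_{j}$ for each fixed $j$; the extra factor of $1-2\beta$ in the ball case is precisely what produces the $(1-\beta)$-versus-$1$ asymmetry in \eqref{eq:balleven}--\eqref{eq:ballodd}.

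The core step is the asymptotic identity
\begin{align}
\mathbf{SStab}_{r}[f_{n}] & =\mathbf{Stab}_{\beta}[f_{n}]+(-1)^{r}\mathbf{Stab}_{\beta}[\chi_{[n]}f_{n}]+o_{n}(1),\label{eq:plan1}\\
\mathbf{BStab}_{r}[f_{n}] & =\mathbf{Stab}_{\beta}[f_{n}]+(-1)^{r}(1-2\beta)\mathbf{Stab}_{\beta}[\chi_{[n]}f_{n}]+o_{n}(1),\label{eq:plan2}
\end{align}
where $\chi_{[n]}(\mathbf{x})=\prod_{i=1}^{n}x_{i}$. To prove each, I would split $\sum_{k=0}^{n}\mathbf{W}_{k}\lambda_{k}$ at a cutoff $k_{0}$. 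The low piece ($k<k_{0}$) converges to a truncation of $\mathbf{Stab}_{\beta}[f_{n}]$ by the pointwise limits above. The high piece ($k>n-k_{0}$), after changing variables $j=n-k$ and using the Fourier identity $\mathbf{W}_{n-j}[f_{n}]=\mathbf{W}_{j}[\chi_{[n]}f_{n}]$ (which follows from $\widehat{\chi_{[n]}f_{n}}(S)=\widehat{f_{n}}([n]\setminus S)$), converges to the corresponding truncation of $(-1)^{r}\mathbf{Stab}_{\beta}[\chi_{[n]}f_{n}]$ (with the factor $1-2\beta$ for the ball). The middle piece is bounded by $((1-2\beta)^{k_{0}}+o_{n}(1))\,\mathbb{E}[f_{n}^{2}(\mathbf{X})]$ via the uniform bound in the second part of Lemma \ref{lem:propertyBallSphNoise}, and the corresponding tail in $\mathbf{Stab}_{\beta}[f_{n}]$ is at most $(1-2\beta)^{k_{0}}\mathbb{E}[f_{n}^{2}(\mathbf{X})]$. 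Sending $n\to\infty$ for fixed $k_{0}$ and then $k_{0}\to\infty$ (equivalently, a diagonal choice $k_{0}(n)\to\infty$) yields \eqref{eq:plan1}--\eqref{eq:plan2}.

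Given \eqref{eq:plan1}--\eqref{eq:plan2}, Part 1 reduces to sandwiching $\mathbf{Stab}_{\beta}[\chi_{[n]}f_{n}]$. The lower bound $\mathbf{Stab}_{\beta}[\chi_{[n]}f_{n}]\ge0$ is immediate from $\mathbf{Stab}_{\beta}[g]=\sum_{k}\mathbf{W}_{k}[g](1-2\beta)^{k}$. For the upper bound, non-negativity of $f_{n}$ gives
\[
\mathbf{Stab}_{\beta}[\chi_{[n]}f_{n}]=\mathbb{E}[\chi_{[n]}(\mathbf{Z})f_{n}(\mathbf{X})f_{n}(\mathbf{Y})]\le\mathbb{E}[f_{n}(\mathbf{X})f_{n}(\mathbf{Y})]=\mathbf{Stab}_{\beta}[f_{n}]
\]
since $|\chi_{[n]}(\mathbf{Z})|=1$ and $f_{n}(\mathbf{X})f_{n}(\mathbf{Y})\ge0$. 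Substituting into \eqref{eq:plan1}--\eqref{eq:plan2} and splitting on the parity of $r$ yields each bound in \eqref{eq:sphereeven}--\eqref{eq:ballodd}; the trivial lower bound in \eqref{eq:sphereodd} is just $\mathbb{E}[f_{n}(\mathbf{X})f_{n}(\mathbf{Y})]\ge0$. For Part 2, any $f_{n}$ supported on the even-weight (resp. odd-weight) slice satisfies $\chi_{[n]}f_{n}=f_{n}$ (resp. $-f_{n}$), so $\mathbf{Stab}_{\beta}[\chi_{[n]}f_{n}]=\mathbf{Stab}_{\beta}[f_{n}]$ and the upper bounds for even $r$ and the non-trivial lower bound for odd $r$ are attained. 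For Part 3, Fourier-weight stability forces $\mathbf{Stab}_{\beta}[\chi_{[n]}f_{n}]=\sum_{k}\mathbf{W}_{n-k}[f_{n}](1-2\beta)^{k}=o_{n}(1)$ by another low/high split, making the remaining bounds tight.

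The main obstacle I expect is ensuring that the error $o_{n}(1)$ in \eqref{eq:plan1}--\eqref{eq:plan2} is genuinely uniform in $r$ for $\beta=r/n\in[\delta,1/2-\delta]$, as required by the theorem statement. This forces the use of the uniform-in-$\beta$ statements in Lemmas \ref{lem:asymextreme} and \ref{lem:propertyBallSphNoise}, together with a careful diagonal choice $k_{0}(n)\to\infty$ balancing the middle-piece bound $(1-2\beta)^{k_{0}}\mathbb{E}[f_{n}^{2}]$ against the pointwise convergence rates in the low and high pieces.
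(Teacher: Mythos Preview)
Your proposal is correct and follows the same skeleton as the paper: expand the stabilities in Fourier weights, split the sum at a cutoff $k_{0}$ into low/middle/high ranges, apply the pointwise Krawtchouk asymptotics from Lemma~\ref{lem:propertyBallSphNoise} on the two ends and the uniform bound (via Lemma~\ref{lem:asymextreme}) on the middle, then let $k_{0}\to\infty$. The paper writes the three pieces as $a(k_{0}),b(k_{0}),c(k_{0})$ and argues directly with those; your repackaging of the high piece as $(-1)^{r}\mathbf{Stab}_{\beta}[\chi_{[n]}f_{n}]$ via $\mathbf{W}_{n-j}[f_{n}]=\mathbf{W}_{j}[\chi_{[n]}f_{n}]$ is the same content in a cleaner wrapper.

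The one genuine difference is how you control the reflected term. The paper obtains the key inequality $b(k_{0})\le a(k_{0})+c(k_{0})+o_{n}(1)$ \emph{indirectly}, by applying the nonnegativity $\mathbf{SStab}_{r-1}[f_{n}]\ge 0$ at the neighbouring odd radius $r-1$ and reading off the resulting constraint. You instead bound $\mathbf{Stab}_{\beta}[\chi_{[n]}f_{n}]=\mathbb{E}[\chi_{[n]}(\mathbf{Z})f_{n}(\mathbf{X})f_{n}(\mathbf{Y})]\le\mathbf{Stab}_{\beta}[f_{n}]$ \emph{directly} from $|\chi_{[n]}(\mathbf{Z})|=1$ and $f_{n}\ge 0$. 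Your route is shorter, avoids the radius shift $r\mapsto r-1$, and unifies the even-$r$ upper bound and the odd-$r$ lower bound as the two sides of the single sandwich $0\le\mathbf{Stab}_{\beta}[\chi_{[n]}f_{n}]\le\mathbf{Stab}_{\beta}[f_{n}]$; the paper's trick, on the other hand, makes transparent that the factor~$2$ loss in \eqref{eq:sphereeven} is exactly the price of the nonnegativity of the odd-radius sphere stability. Parts~2 and~3 are handled the same way in both arguments.
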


\begin{proof}
We first consider the sphere noise case. By Lemma \ref{lem:propertyBallSphNoise},
for fixed $k$, 
\begin{align*}
2\mathbb{P}[\prod_{i=1}^{k}Z_{i}=1]-1 & =(1-2\beta)^{k}+o_{n}(1),\\
2\mathbb{P}[\prod_{i=1}^{n-k}Z_{i}=1]-1 & =\frac{K_{r}(n-k)}{{n \choose r}}=\frac{(-1)^{r}}{{n \choose r}}K_{r}(k)=(-1)^{r}(1-2\beta)^{k}+o_{n}(1),
\end{align*}
and for fixed $k_{0}$, 
\begin{align*}
\max_{k\in[k_{0}+1:n-k_{0}-1]}|2\mathbb{P}[\prod_{i=1}^{k}Z_{i}=1]-1| & =(1-2\beta)^{k_{0}+1}+o_{n}(1).
\end{align*}
By these equalities, we have that for fixed $k_{0}$, 
\begin{align*}
\mathbf{SStab}_{r}[f_{n}] & =\sum_{k=0}^{n}\mathbf{W}_{k}(2\mathbb{P}[\prod_{i=1}^{k}Z_{i}=1]-1)\lessgtr a(k_{0})+(-1)^{r}b(k_{0})\pm c(k_{0})+o_{n}(1).
\end{align*}
where  {
\begin{equation}
a(k_{0})=\sum_{k=0}^{k_{0}}\mathbf{W}_{k}(1-2\beta)^{k},\;b(k_{0})=\sum_{k=0}^{k_{0}}\mathbf{W}_{n-k}(1-2\beta)^{k},\;c(k_{0})=(\sum_{k=k_{0}+1}^{n-k_{0}-1}\mathbf{W}_{k})(1-2\beta)^{k_{0}+1}.\label{eq:-26}
\end{equation}
}For odd $r$, 
\begin{align}
0\le\mathbf{SStab}_{r}[f_{n}] & =a(k_{0})-b(k_{0})+c(k_{0})+o_{n}(1).\label{eq:-18}
\end{align}
Equation \eqref{eq:-18} implies \eqref{eq:sphereodd} since $a(k_{0})\leq\sum_{k=0}^{n}\mathbf{W}_{k}(1-2\beta)^{k}=\mathbf{Stab}_{\beta}[f_{n}]$
and $c(k_{0})\leq\limsup_{n\to\infty}\mathbb{E}[f_{n}^{2}](1-2\beta)^{k_{0}+1}\to0$
as $k_{0}\to\infty$.

We now consider even $r$. For this case, 
\begin{align}
\mathbf{SStab}_{r}[f_{n}] & \lessgtr a(k_{0})+b(k_{0})\pm c(k_{0})+o_{n}(1).\label{eq:-30}
\end{align}
Since $\mathbf{Stab}_{\beta}[f_{n}]=\sum_{k=0}^{n}\mathbf{W}_{k}(1-2\beta)^{k}\le a(k_{0})+b(k_{0})+c(k_{0})$
and $c(k_{0})\to0$ as $k_{0}\to\infty$. We have $\mathbf{SStab}_{r}[f_{n}]\ge\mathbf{Stab}_{\beta}[f_{n}]+o_{n}(1)$. 

On the other hand, for  even $r$, \eqref{eq:-18} also implies that
for a sequence of functions $\{f_{n}\}$, 
\begin{equation}
b(k_{0})\leq a(k_{0})+c(k_{0})+o_{n}(1),\label{eq:-10}
\end{equation}
where the $\beta$'s in definitions of $a,b,c$ in \eqref{eq:-26}
are reset to $(r-1)/n=\beta-1/n$ so that the radius $r-1$ is odd.
Note that $1/n$ vanishes as $n\to\infty$, and hence, this asymptotically
vanishing term can be merged into $o_{n}(1)$, which means that \eqref{eq:-10}
still holds if $a,b,c$ remain unchanged as in \eqref{eq:-26} (in
other words, the $\beta$'s there are $r/n$). Substituting \eqref{eq:-10}
into \eqref{eq:-30} yields that for even $r$, 
\begin{align*}
\mathbf{SStab}_{r}[f_{n}] & \leq2a(k_{0})+2c(k_{0})+o_{n}(1)\leq2\mathbf{Stab}_{\beta}[f_{n}]+2c(k_{0})+o_{n}(1).
\end{align*}
Since $k_{0}>0$ is arbitrary and $c(k_{0})\to0$ as $k_{0}\to\infty$,
we obtain \eqref{eq:sphereeven}.

For ball noise case, inequalities \eqref{eq:balleven} and \eqref{eq:ballodd}
can be proven similarly. The proof is omitted. Furthermore, if $f$
is supported on a subset of $\{\mathbf{x}:d_{\mathrm{H}}(\mathbf{x},\mathbf{1})\textrm{ is even}\}$,
then by definition, the Fourier coefficients of $f$ satisfy that
$\hat{f}_{S}=\hat{f}_{S^{c}}$ for any $S\subseteq[1:n]$. Hence,
Statement 2 can be easily verified. In addition, Statement 3 can
be easily verified as well. 
\end{proof}
We next turn back to prove Theorem \ref{thm:bsiid}. 
\begin{proof}[Proof of Theorem \ref{thm:bsiid}]
The upper bound in \eqref{eq:-116} and the upper and lower bounds
in \eqref{eq:-117}, as well as $\Gamma_{\mathrm{odd,S}}(\alpha,\beta),\,\Gamma_{\mathrm{odd,B}}(\alpha,\beta)\leq\Gamma_{\mathrm{IID}}(\alpha,\beta)$
and $\underline{\Gamma}_{\mathrm{even,S}}(\alpha,\beta)\geq\Gamma_{\mathrm{IID}}(\alpha,\beta)$
follow directly from Theorem \ref{thm:1)-Let-}. It remains to prove
$\Gamma_{\mathrm{odd,S}}(\alpha,\beta),\,\Gamma_{\mathrm{odd,B}}(\alpha,\beta)\geq\Gamma_{\mathrm{IID}}(\alpha,\beta)$
and $\underline{\Gamma}_{\mathrm{even,S}}(\alpha,\beta)\geq\frac{1}{2}\Gamma_{\mathrm{IID}}(2\alpha,\beta).$

We first prove $\Gamma_{\mathrm{odd,S}}(\alpha,\beta)\geq\Gamma_{\mathrm{IID}}(\alpha,\beta)$.
Let $A\subseteq\{-1,1\}^{n}$ be a subset of size $M$. Now we construct
a new subset $B_{k}=A\times\{-1,1\}^{k}.$ Obviously, $B_{k}\subseteq\{-1,1\}^{n+k}$
and $|B_{k}|=2^{k}M$. Next, we prove that for fixed $n,A$, 
\begin{equation}
\lim_{k\to\infty}\mathbf{SStab}_{2\lfloor\frac{(n+k)\beta}{2}\rfloor+1}[1_{B_{k}}]\geq\mathbf{Stab}_{\beta}[1_{A}].\label{eq:-118}
\end{equation}

For any $\mathbf{x}\in B_{k}$, we can write $\mathbf{x}=(\mathbf{x}_{1},\mathbf{x}_{2})$
where $\mathbf{x}_{1}\in A$ and $\mathbf{x}_{2}\in\{-1,1\}^{k}$.
Then we have 
\begin{equation}
d_{\mathrm{H}}(\mathbf{x},\mathbf{y})=d_{\mathrm{H}}(\mathbf{x}_{1},\mathbf{y}_{1})+d_{\mathrm{H}}(\mathbf{x}_{2},\mathbf{y}_{2}).\label{eq:-68-1}
\end{equation}
Denote $r_{k}:=2\lfloor\frac{(n+k)\beta}{2}\rfloor+1$. Using \eqref{eq:-68-1}
we obtain that under sphere noise, 
\begin{align}
\mathbf{SStab}_{r_{k}}[1_{B_{k}}] & =\mathbb{P}[\mathbf{X}\in B_{k},\mathbf{Y}\in B_{k}]\nonumber \\
 & =\frac{\#\{(\mathbf{x},\mathbf{y})\in B_{k}^{2}:d_{\mathrm{H}}(\mathbf{x},\mathbf{y})=r_{k}\}}{2^{n+k}{n+k \choose r_{k}}}\nonumber \\
 & =\sum_{i=0}^{n}\#\{(\mathbf{x}_{1},\mathbf{y}_{1})\in A^{2}:d_{\mathrm{H}}(\mathbf{x}_{1},\mathbf{y}_{1})=i\}\frac{\#\{(\mathbf{x}_{2},\mathbf{y}_{2}):d_{\mathrm{H}}(\mathbf{x}_{2},\mathbf{y}_{2})=r_{k}-i\}}{2^{n+k}{n+k \choose r_{k}}}\nonumber \\
 & =\sum_{i=0}^{n}\#\{(\mathbf{x}_{1},\mathbf{y}_{1})\in A^{2}:d_{\mathrm{H}}(\mathbf{x}_{1},\mathbf{y}_{1})=i\}\frac{{k \choose r_{k}-i}}{2^{n}{n+k \choose r_{k}}}\nonumber \\
 & \to\frac{1}{2^{n}}\sum_{i=0}^{n}\#\{(\mathbf{x}_{1},\mathbf{y}_{1})\in A^{2}:d_{\mathrm{H}}(\mathbf{x}_{1},\mathbf{y}_{1})=i\}\beta^{i}(1-\beta)^{n-i}\textrm{ as }k\to\infty\label{eq:-22}\\
 & =\mathbf{Stab}_{\beta}[1_{A}],\nonumber 
\end{align}
where \eqref{eq:-22} follows by Lemma \ref{lem:sphere-ball-ratio}.
Therefore, \eqref{eq:-118} holds, which implies $\Gamma_{\mathrm{odd,S}}(\alpha,\beta)\geq\Gamma_{\mathrm{IID}}(\alpha,\beta).$
Similarly, one can prove $\Gamma_{\mathrm{odd,B}}(\alpha,\beta)\geq\Gamma_{\mathrm{IID}}(\alpha,\beta).$

We next prove $\underline{\Gamma}_{\mathrm{even,S}}(\alpha,\beta)\geq\frac{1}{2}\Gamma_{\mathrm{IID}}(2\alpha,\beta)$.
We have shown that $\Gamma_{\mathrm{odd,S}}(\alpha,\beta)=\Gamma_{\mathrm{IID}}(\alpha,\beta)$
holds. We now claim that $\Gamma_{\mathrm{odd,S}}(\alpha,\beta)$
is attained by a Fourier-weight stable sequence of Boolean functions,
and moreover, this sequence also attains $\Gamma_{\mathrm{IID}}(\alpha,\beta)$.
 We now prove it. For any optimal sequence of Boolean functions $\{f_{n}\}$
attaining $\Gamma_{\mathrm{odd,S}}(\alpha,\beta)$, it holds that
$\mathbf{Stab}_{\beta}[f_{n}]\ge a(k_{0})$, where $a(k_{0})$ was
defined in \eqref{eq:-26}. Combining this inequality with \eqref{eq:-18}
yields that 
\begin{align}
\mathbf{SStab}_{r}[f_{n}] & \le\mathbf{Stab}_{\beta}[f_{n}]-b(k_{0})+c(k_{0})+o_{n}(1).\label{eq:-18-1}
\end{align}
Taking limits as $n\to\infty$, we obtain 
\[
\Gamma_{\mathrm{odd,S}}(\alpha,\beta)\le\liminf_{n\to\infty}\mathbf{Stab}_{\beta}[f_{n}]-b(k_{0})+c(k_{0})\le\Gamma_{\mathrm{IID}}(\alpha,\beta)-\limsup_{n\to\infty}b(k_{0})+(1-2\beta)^{k_{0}+1}.
\]
Then, taking limits as $k_{0}\to\infty$ and by using the equality
$\Gamma_{\mathrm{odd,S}}(\alpha,\beta)=\Gamma_{\mathrm{IID}}(\alpha,\beta)$,
we have that $\lim_{k_{0}\to\infty}\limsup_{n\to\infty}b(k_{0})=0$, 
or equivalently,  $\limsup_{n\to\infty}b(k_{0})=0,\forall k_{0}\ge 0$,
which implies that $\lim_{n\to\infty}\sum_{k=n-k_{0}}^{n}\mathbf{W}_{k}[f_{n}]=0, \forall  k_{0} \ge 0$,
i.e., $\{f_{n}\}$ is Fourier-weight stable.
Moreover, we also have that $\liminf_{n\to\infty}\mathbf{Stab}_{\beta}[f_{n}]$
is equal to $\Gamma_{\mathrm{IID}}(\alpha,\beta)$.   Hence, $\Gamma_{\mathrm{IID}}(\alpha,\beta)$
is attained by $\{f_{n}\}$ as well, i.e., the claim is true.

We also need the following decomposition of a Boolean function. Any
Boolean $f$ can be written as $f=f_{\mathrm{even}}+f_{\mathrm{odd}},$
where $f_{\mathrm{even}}=\frac{1+\chi_{[1:n]}}{2}f$ and $f_{\mathrm{odd}}=\frac{1-\chi_{[1:n]}}{2}f$
are Boolean functions respectively supported on vectors $\mathbf{x}$
of even and odd Hamming weights $d_{\mathrm{H}}(\mathbf{x},\mathbf{1})$.
In fact, if $A$ is the support of $f$, then the supports of $f_{\mathrm{even}}$
and $f_{\mathrm{odd}}$ are respectively the even part and odd part
of $A$; see Definition \ref{def:even-part}. For functions $f_{\mathrm{even}},f_{\mathrm{odd}}$,
their Fourier coefficients satisfy that 
\begin{align*}
\hat{f}_{\mathrm{even},S} & =\mathbb{E}_{\mathbf{X}\sim\mathrm{Unif}\{-1,1\}^{n}}[f(\mathbf{X})\frac{1+\chi_{[1:n]}(\mathbf{X})}{2}\chi_{S}(\mathbf{X})]\\
 & =\mathbb{E}_{\mathbf{X}\sim\mathrm{Unif}\{-1,1\}^{n}}[f(\mathbf{X})\frac{\chi_{S}(\mathbf{X})+\chi_{S^{c}}(\mathbf{X})}{2}]=\frac{\hat{f}_{S}+\hat{f}_{S^{c}}}{2}
\end{align*}
and $\hat{f}_{\mathrm{odd},S}=\frac{\hat{f}_{S}-\hat{f}_{S^{c}}}{2}.$
Define the Fourier weights $\mathbf{W}_{\mathrm{even},k}:=\sum_{S:|S|=k}\hat{f}_{\mathrm{even},S}^{2}$
and $\mathbf{W}_{\mathrm{odd},k}:=\sum_{S:|S|=k}\hat{f}_{\mathrm{odd},S}^{2}$.
 {From \eqref{eq:-25}, under sphere noise, 
\begin{align}
\mathbb{E}[f_{\mathrm{even}}(\mathbf{X})f_{\mathrm{odd}}(\mathbf{Y})] & =\langle f_{\mathrm{even}},\mathbb{S}_{r}f_{\mathrm{odd}}\rangle=\sum_{S\subseteq[1:n]}\hat{f}_{\mathrm{even},S}\hat{f}_{\mathrm{odd},S}\frac{K_{r}(|S|)}{{n \choose r}}\nonumber \\
 & =\sum_{S:|S|\le n/2}\hat{f}_{\mathrm{even},S}\hat{f}_{\mathrm{odd},S}\frac{K_{r}(|S|)}{{n \choose r}}-\sum_{S:|S|\le n/2}\hat{f}_{\mathrm{even},S}\hat{f}_{\mathrm{odd},S}\frac{K_{r}(|S|)}{{n \choose r}}=0,\label{eq:-31}
\end{align}
where $\mathbf{Y}=\mathbf{X}\circ\mathbf{Z}$ with $\mathbf{Z}\sim\mathrm{Unif}(S_{r})$,
and in \eqref{eq:-31} $K_{r}(|S^{c}|)=K_{r}(|S|)$ for even $r$
is applied. }

Given $(\alpha,\beta)$, denote $\{f_{n}\}$ as an optimal Fourier-weight
stable sequence of Boolean functions with support size $\lfloor2\alpha2^{n}\rfloor$
that attains $\Gamma_{\mathrm{IID}}(2\alpha,\beta)$. For brevity,
we omit the subscript $n$ of $f_{n}$. Then for $r=2\lfloor\frac{\beta n}{2}\rfloor$,
\begin{align}
\Gamma_{\mathrm{IID}}(2\alpha,\beta)=\mathbf{Stab}_{\beta}[f] & =\mathbf{SStab}_{r}[f]+o_{n}(1)\label{eq:-27}\\
 & =\mathbb{E}[[f_{\mathrm{even}}+f_{\mathrm{odd}}](\mathbf{X})[f_{\mathrm{even}}+f_{\mathrm{odd}}](\mathbf{Y})]+o_{n}(1)\nonumber \\
 & =\mathbb{E}[f_{\mathrm{even}}(\mathbf{X})f_{\mathrm{even}}(\mathbf{Y})]+\mathbb{E}[f_{\mathrm{odd}}(\mathbf{X})f_{\mathrm{odd}}(\mathbf{Y})]+o_{n}(1)\nonumber \\
 & \leq\underline{\Gamma}_{\mathrm{even,S}}(\alpha_{\mathrm{even}},\beta)+\underline{\Gamma}_{\mathrm{even,S}}(\alpha_{\mathrm{odd}},\beta)+o_{n}(1),\label{eq:-4}
\end{align}
where \eqref{eq:-27} follows by Statement 3  of Theorem \ref{thm:1)-Let-},
and $\alpha_{\mathrm{even}}=\mathbb{E}[f_{\mathrm{even}}(\mathbf{X})]$,
$\alpha_{\mathrm{odd}}=\mathbb{E}[f_{\mathrm{odd}}(\mathbf{X})]$.
On the other hand, since $f$ is Fourier-weight stable, we have 
\begin{align*}
\mathbf{W}_{\mathrm{even},0} & =(\frac{\hat{f}_{\emptyset}+\hat{f}_{[1:n]}}{2})^{2}\leq(\frac{\sqrt{\mathbf{W}_{0}}+\sqrt{\mathbf{W}_{n}}}{2})^{2}\to\frac{\mathbf{W}_{0}}{4},\textrm{ as }n\to\infty,
\end{align*}
and similarly $\mathbf{W}_{\mathrm{odd},0}\to\frac{\mathbf{W}_{0}}{4}$
as $n\to\infty$. This means 
\begin{align}
\alpha_{\mathrm{even}} & \to\alpha\textrm{ and }\alpha_{\mathrm{odd}}\to\alpha.\label{eq:-29}
\end{align}
 {Furthermore, we claim that $\underline{\Gamma}_{\mathrm{even,S}}(\alpha,\beta)$
is continuous in $\alpha$. Let $0\le\alpha_{1}<\alpha_{2}\le1$.
We now prove this claim. Let $1_{A_{n}}$ be an optimal Boolean function
attaining $\Gamma_{\mathrm{S}}^{(n)}(\lfloor\alpha_{2}2^{n}\rfloor,2\lfloor\frac{\beta n}{2}\rfloor)$,
where $A_{n}$ is the support of this optimal Boolean function. Let
$B_{n}$ be an arbitrary subset of $A_{n}$ such that $\mathbb{P}[\mathbf{X}\in B_{n}]=\alpha_{1}$.
Then, 
\begin{align*}
\Gamma_{\mathrm{S}}^{(n)}(\lfloor\alpha_{2}2^{n}\rfloor,2\lfloor\frac{\beta n}{2}\rfloor) & =\mathbb{P}[\mathbf{X}\in A_{n},\mathbf{Y}\in A_{n}]\\
 & =\mathbb{P}[\mathbf{X}\in B_{n},\mathbf{Y}\in B_{n}]+\mathbb{P}[\mathbf{X}\in A_{n}\backslash B_{n},\mathbf{Y}\in B_{n}]\\
 & \qquad+\mathbb{P}[\mathbf{X}\in B_{n},\mathbf{Y}\in A_{n}\backslash B_{n}]+\mathbb{P}[\mathbf{X}\in A_{n}\backslash B_{n},\mathbf{Y}\in A_{n}\backslash B_{n}]\\
 & \le\mathbb{P}[\mathbf{X}\in B_{n},\mathbf{Y}\in B_{n}]+3\mathbb{P}[\mathbf{X}\in A_{n}\backslash B_{n}]\\
 & \le\Gamma_{\mathrm{S}}^{(n)}(\lfloor\alpha_{1}2^{n}\rfloor,2\lfloor\frac{\beta n}{2}\rfloor)+3\frac{\lfloor\alpha_{2}2^{n}\rfloor-\lfloor\alpha_{1}2^{n}\rfloor}{2^{n}}.
\end{align*}
Taking $\liminf_{n\to\infty}$, we obtain 
\[
\underline{\Gamma}_{\mathrm{even,S}}(\alpha_{1},\beta)\le\underline{\Gamma}_{\mathrm{even,S}}(\alpha_{2},\beta)\le\underline{\Gamma}_{\mathrm{even,S}}(\alpha_{1},\beta)+3(\alpha_{2}-\alpha_{1}),
\]
which implies the continuity of $\underline{\Gamma}_{\mathrm{even,S}}(\alpha,\beta)$
in $\alpha$. (In fact, by the same argument, other quantities such
as $\Gamma_{\mathrm{IID}}(\alpha,\beta),\overline{\Gamma}_{\mathrm{even,S}}(\alpha,\beta),\overline{\Gamma}_{\mathrm{even,B}}(\alpha,\beta)$
etc. are also continuous in $\alpha$.)}

 Finally, combining \eqref{eq:-4} and \eqref{eq:-29} and applying
the continuity of $\underline{\Gamma}_{\mathrm{even,S}}(\alpha,\beta)$
yields $\underline{\Gamma}_{\mathrm{even,S}}(\alpha,\beta)\geq\frac{1}{2}\Gamma_{\mathrm{IID}}(2\alpha,\beta).$ 

\end{proof}

\appendix

\section{\label{sec:Proof-of-Lemma}Proof of Lemma \ref{lem:exactextreme}}

Here we prove Lemma \ref{lem:exactextreme} by using the generating
function method.

Statement 1: By the equality $\sum_{k}K_{k}^{(n)}(i)z^{k}=(1-z)^{i}(1+z)^{n-i}$
where $\sum_{k}$ means the summation over all integers (in fact,
it can be replaced by $\sum_{k=0}^{n}$ for this equality), we have
that 
\begin{align}
\sum_{k}[K_{k}^{(n)}(0)-K_{k}^{(n)}(i)]z^{k} & =(1+z)^{n-i}[(1+z)^{i}-(1-z)^{i}]\label{eq:-88}\\
 & =2(1+z)^{n-i}[\sum_{\textrm{odd }j}{i \choose j}z^{j}].\label{eq:-87}
\end{align}
Here in fact, the variable $j$ under the summation in \eqref{eq:-87}
can be additionally restricted to belong to $[0:i]$. Similarly, we
have 
\begin{align}
\sum_{k}[K_{k}^{(n)}(0)+K_{k}^{(n)}(i)]z^{k} & =2(1+z)^{n-i}[\sum_{\textrm{even }j}{i \choose j}z^{j}].\label{eq:-87-1}
\end{align}
Since all coefficients in \eqref{eq:-87} and \eqref{eq:-87-1} are
nonnegative, we have $K_{k}^{(n)}(0)\geq|K_{k}^{(n)}(i)|.$

 {Statement 2: We first prove Statement 2 for odd $i$,
i.e., the following claim. }
\begin{claim}
 {\label{claim:statement2oddi} $K_{k}^{(n)}(1)\ge|K_{k}^{(n)}(i)|$
holds for $0\le k\le\frac{n-1}{2}$ and } {\emph{odd}} {{}
$i$ such that $1\le i\le n-1$. }
\end{claim}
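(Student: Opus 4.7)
My plan is to adapt the generating-function technique used for Statement~1. Starting from
\[
\sum_k \bigl[K_k^{(n)}(1) \mp K_k^{(n)}(i)\bigr]\, z^k = (1-z)(1+z)^{n-i}\bigl[(1+z)^{i-1} \mp (1-z)^{i-1}\bigr],
\]
since $i$ is odd, $i-1$ is even and both polynomials $(1+z)^{i-1}\pm(1-z)^{i-1}$ have nonnegative coefficients (they are supported on the even and odd powers of $z$, respectively). Hence the right-hand side factors as $(1-z)\,W_{\mp}(z)$, where $W_{\mp}(z):=(1+z)^{n-i}[(1+z)^{i-1}\mp(1-z)^{i-1}]$ has nonnegative coefficients. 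A Vandermonde convolution gives the closed form $[W_{\mp}]_k = \binom{n-1}{k}\mp K_k^{(n-1)}(i-1)$, whose nonnegativity is exactly Statement~1 of the lemma, and the claim is equivalent to the monotonicity $[W_{\mp}]_k \ge [W_{\mp}]_{k-1}$ for $k\le(n-1)/2$.

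To establish this monotonicity I would induct on $n$, leaning on the companion even-$i$ subclaim of Statement~2 at parameter $n-1$. Combining the shift identity $K_k^{(n)}(i)=K_k^{(n-1)}(i-1)-K_{k-1}^{(n-1)}(i-1)$ (a direct consequence of $(1-z)^i(1+z)^{n-i}=(1-z)\cdot(1-z)^{i-1}(1+z)^{n-i}$) with the Pascal-rule identity $K_k^{(n-1)}(1)+K_{k-1}^{(n-1)}(1)=K_k^{(n)}(1)$, the inductive hypothesis applied to the even index $i-1\in[2,n-2]$ yields
\[
|K_k^{(n)}(i)| \le |K_k^{(n-1)}(i-1)|+|K_{k-1}^{(n-1)}(i-1)| \le K_k^{(n-1)}(1)+K_{k-1}^{(n-1)}(1) = K_k^{(n)}(1)
\]
for every $k\le(n-2)/2$, settling all but (possibly) one value of $k$ in the claimed range.

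The main obstacle is therefore the single boundary value $k=(n-1)/2$, which arises only when $n$ is odd. To handle it I would invoke the Krawtchouk duality $\binom{n}{k}K_k^{(n)}(i)=\binom{n}{i}K_i^{(n)}(k)$; at $k=(n-1)/2$ it suffices to bound $|K_i^{(n)}((n-1)/2)|$, and the generating identity
\[
\sum_i K_i^{(n)}\!\bigl(\tfrac{n-1}{2}\bigr)\,w^i = (1-w)^{(n-1)/2}(1+w)^{(n+1)/2} = (1+w)(1-w^2)^{(n-1)/2}
\]
yields the explicit formula $|K_i^{(n)}((n-1)/2)|=\binom{(n-1)/2}{(i-1)/2}$ for odd $i$. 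Thus the claim at $k=(n-1)/2$ reduces to the combinatorial inequality $n\binom{(n-1)/2}{(i-1)/2}\le\binom{n}{i}$ for odd $i\in[1,n-1]$, which is tight precisely at $i=1$ and $i=n-2$ (matching the extremal equality cases exhibited in Statement~3 via $K_k^{(n)}(2)$) and is readily verified by a short induction on $i$, completing the argument.
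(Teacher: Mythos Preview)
Your approach is correct but genuinely different from the paper's. The paper proves the odd-$i$ claim directly and self-containedly: it expands the coefficient of $z^k$ in $(1-z)(1+z)^{n-i}\bigl[(1+z)^{i-1}-(1-z)^{i-1}\bigr]$ as $2a_{i,k}$ with
\[
a_{i,k}=\sum_{\text{odd }j}\binom{i-1}{j}\Bigl[\binom{n-i}{k-j}-\binom{n-i}{k-j-1}\Bigr],
\]
rewrites this via an Abel summation as $\sum_{\text{odd }j}\bigl[\binom{i-1}{j}-\binom{i-1}{j-2}\bigr]b_{k-j}$ with $b_l=\binom{n-i-1}{l}$, and then (crucially using that $i-1$ is even so that $j\mapsto i+1-j$ preserves odd parity) pairs the summand at $j$ with the one at $i+1-j$ to exhibit $a_{i,k}$ as a sum of nonnegative products for $k\le n/2$. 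No induction on $n$ and no appeal to the even-$i$ case is needed.

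Your route instead reduces odd $i$ at $n$ to even $i-1$ at $n-1$ via the $(1-z)$-shift identity, with a separate explicit check at $k=(n-1)/2$ for odd $n$. (Your boundary inequality $n\binom{(n-1)/2}{(i-1)/2}\le\binom{n}{i}$ does hold: the ratio $\binom{2m+1}{2j+1}/\binom{m}{j}$ is unimodal in $j=(i-1)/2$ with value $2m+1=n$ at both endpoints $j=0$ and $j=m-1$.) This works, but be aware that in the paper the even-$i$ case is itself \emph{derived from} Claim~1 (by the reflection $K_k^{(n)}(i)=(-1)^kK_k^{(n)}(n-i)$ for odd $n$, then by Pascal reduction for even $n$), so your argument stands only once recast as a joint induction on $n$ establishing both parities of $i$ simultaneously. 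The paper's pairing argument avoids this interdependence and delivers the odd-$i$ case outright, at the cost of a more delicate combinatorial manipulation; your inductive reduction is simpler at each step but needs the extra boundary case and the explicit joint-inductive framing.
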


 {Similarly to \eqref{eq:-88}-\eqref{eq:-87}, we
obtain for $1\le i\le n-1$, 
\begin{align*}
\sum_{k}[K_{k}^{(n)}(1)-K_{k}^{(n)}(i)]z^{k} & =2(1-z)[\sum_{j}{n-i \choose j}z^{j}][\sum_{\textrm{odd }j}{i-1 \choose j}z^{j}]\\
 & =2(1-z)\sum_{k}[\sum_{\textrm{odd }j}{n-i \choose k-j}{i-1 \choose j}]z^{k}\\
 & =2\sum_{k}a_{i,k}z^{k},
\end{align*}
where 
\begin{align*}
a_{i,k} & :=\sum_{\textrm{odd }j}{i-1 \choose j}[{n-i \choose k-j}-{n-i \choose k-j-1}].
\end{align*}
By the formula ${m \choose l}={m-1 \choose l}+{m-1 \choose l-1}$,
we can rewrite $a_{i,k}=\sum_{\textrm{odd }j}{i-1 \choose j}(b_{k-j}-b_{k-j-2})$,
where $b_{l}:={n-i-1 \choose l}$. We next prove $a_{i,k}\ge0$ for
$k\le\frac{n}{2}$ and odd $i\in[1:n-1]$. }

 {   Observe that for odd $i$,
\begin{align*}
a_{i,k} & =\sum_{\textrm{odd }j}[{i-1 \choose j}-{i-1 \choose j-2}]b_{k-j}\\
 & =\sum_{\textrm{odd }j<\frac{i+1}{2}}[{i-1 \choose j}-{i-1 \choose j-2}]b_{k-j}+[{i-1 \choose i+1-j}-{i-1 \choose i-1-j}]b_{k-(i+1-j)}\\
 & =\sum_{\textrm{odd }j<\frac{i+1}{2}}[{i-1 \choose j}-{i-1 \choose j-2}][b_{k-j}-b_{k-(i+1-j)}].
\end{align*}
}

 {We now require some basic properties of binomial
coefficients. For a nonnegative integer $m$, the function $g:j\in\mathbb{Z}\mapsto{m \choose j}$
satisfies following properties. }
\begin{enumerate}
\item  {$g$ is symmetric with respect to $\frac{m}{2}$,
i.e., $g(j)=g(m-j)$.}
\item  {$g$ is nondecreasing for $j\le\frac{m+1}{2}$ and
nonincreasing for $j\ge\frac{m+1}{2}$. }
\item  {$g(j_{1})\le g(j_{2})$ for all integers $j_{1},j_{2}$
such that $j_{1}\le j_{2}$ and $j_{1}+j_{2}\le m$. }
\end{enumerate}
 {The first two properties follow by definition, and
the third one follows by the first two since $g(j_{1})\le g(j_{2})$
if $j_{2}\le\frac{m+1}{2}$, and $g(j_{1})\le g(m-j_{2})=g(j_{2})$
if $j_{2}>\frac{m+1}{2}$.}

 {By the second property above, we have ${i-1 \choose j}\ge{i-1 \choose j-2}$
since $j<\frac{i+1}{2}$ (or equivalently, $j\le\frac{i}{2}$). By
the last one, we have $b_{k-(i+1-j)}\le b_{k-j}$, since $k-(i+1-j)\le k-j$
and $k-j+k-(i+1-j)=2k-i-1\le n-i-1$.  Hence $a_{i,k}\ge0$, which
implies $K_{k}^{(n)}(1)\geq K_{k}^{(n)}(i)$ for odd $i\in[1:n-1]$.
Similarly, one can show that $K_{k}^{(n)}(1)\ge-K_{k}^{(n)}(i)$ for
odd $i\in[1:n-1]$, just by replacing all the summations above over
odd $j$ with the corresponding ones over even $j$. Hence, Claim
\ref{claim:statement2oddi} holds. }

 {We next use Claim \ref{claim:statement2oddi} to
prove Statement 2 for odd $n$ and all $i\in[1:n-1]$. From Claim
\ref{claim:statement2oddi}, $K_{k}^{(n)}(1)\ge|K_{k}^{(n)}(i)|$
for odd $i\in[1:n-1]$. On the other hand, $K_{k}^{(n)}(i)=(-1)^{k}K_{k}^{(n)}(n-i),0\le i,k\le n$
\citet[(11)]{krasikov2001nonnegative}. Hence, $K_{k}^{(n)}(1)\ge|K_{k}^{(n)}(n-i)|$
for odd $i\in[1:n-1]$. If, additionally, $n$ is odd, then $n-i$
takes all even numbers in $[1:n-1]$. Hence, for odd $n$, $K_{k}^{(n)}(1)\ge|K_{k}^{(n)}(i)|$
holds for all $i\in[1:n-1]$. }

 {We lastly prove Statement 2 for even $n$ and all
$i\in[1:n-1]$. By the conclusion for odd $n$ above, $K_{k}^{(n-1)}(1)\ge|K_{k}^{(n-1)}(i)|$
holds for $0\le k\le\frac{n-2}{2}$ and $1\le i\le n-2$, and $K_{k-1}^{(n-1)}(1)\ge|K_{k-1}^{(n-1)}(i)|$
holds for $1\le k\le\frac{n}{2}$ and $1\le i\le n-2$. By the property
that $K_{k}^{(n)}(i)=K_{k}^{(n-1)}(i)+K_{k-1}^{(n-1)}(i),1\le k\le n,0\le i\le n$
\citet[(47)]{levenshtein1995krawtchouk}, we have that for $1\le k\le\frac{n-2}{2}$
and $1\le i\le n-2$,
\begin{align*}
K_{k}^{(n)}(1) & =K_{k}^{(n-1)}(1)+K_{k-1}^{(n-1)}(1)\ge|K_{k}^{(n-1)}(i)|+|K_{k-1}^{(n-1)}(i)|\\
 & \ge|K_{k}^{(n-1)}(i)+K_{k-1}^{(n-1)}(i)|=|K_{k}^{(n)}(i)|.
\end{align*}
Hence, it remains to verify the case that $k=0,\frac{n-1}{2}$, or
$i=n-1$. First, note that $\frac{n-1}{2}$ is not an integer, and
hence, $k$ cannot equal it. We next verify the case that $k=0$ or
$i=n-1$. By definition, for $k=0$, $K_{0}^{(n)}(i)=1$, and hence,
$K_{0}^{(n)}(1)\ge|K_{0}^{(n)}(i)|$ holds obviously. For $i=n-1$,
by definition, $K_{k}^{(n)}(1)={n \choose k}(1-\frac{2k}{n})\ge0$
for $k\le n/2$, and $K_{k}^{(n)}(n-1)=(-1)^{k}K_{k}^{(n)}(1)$. Obviously,
$K_{k}^{(n)}(1)=|K_{k}^{(n)}(n-1)|$ holds. Hence, Statement 2 holds
for even $n$. Combining two points above (the cases of odd $n$ and
even $n$), Statement 2 holds for all $n\ge1$. }

Statement 3: For $2\le i\le n-2$, 
\begin{align}
\sum_{k=0}^{n}[K_{k}^{(n)}(2)-K_{k}^{(n)}(i)]z^{k} & =(1-z)^{2}(1+z)^{n-i}[(1+z)^{i-2}-(1-z)^{i-2}].\label{eq:-94-2}
\end{align}
Observe that $(1+z)^{i-2}-(1-z)^{i-2}=\sum_{\textrm{odd }j\in[0:i-2]}{i-2 \choose j}z^{j}$
and 
\begin{align}
(1-z)^{2}(1+z)^{n-i} & =(1-z)^{2}[\sum_{j=0}^{n-i}{n-i \choose j}z^{j}]\nonumber \\
 & =\sum_{j=0}^{n-i}[{n-i \choose j}+{n-i \choose j-2}-2{n-i \choose j-1}]z^{j}.\label{eq:-8}
\end{align}
It is easy to verify that ${n-i \choose j}+{n-i \choose j-2}-2{n-i \choose j-1}\ge0$
if $j\le\frac{n-i+2-\sqrt{n-i+2}}{2}$. It means that $K_{k}^{(n)}(2)\geq K_{k}^{(n)}(i)$
when $k\le\frac{n-i+2-\sqrt{n-i+2}}{2}$, since the terms $z^{j}$
in \eqref{eq:-8} with $j>k$ have no contribution to the term $z^{k}$
in the final expansion in \eqref{eq:-94-2}.

On the other hand, 
\begin{align*}
(1-z)^{2}(1+z)^{2}[(1+z)^{i-2}-(1-z)^{i-2}] & =2(1-z^{2})^{2}[\sum_{\textrm{odd }j\in[0:i-2]}{i-2 \choose j}z^{j}]\\
 & =2\sum_{\textrm{odd }j\in[0:i-2]}[{i-2 \choose j}+{i-2 \choose j-4}-2{i-2 \choose j-2}]z^{j}.
\end{align*}
It is easy to verify that ${i-2 \choose j}+{i-2 \choose j-4}-2{i-2 \choose j-2}\ge0$
if $j\le\frac{i+2-\sqrt{i+2}}{2}$. Hence $K_{k}^{(n)}(2)\geq K_{k}^{(n)}(i)$
also holds when $k\le\frac{i+2-\sqrt{i+2}}{2}$.

Combining the two points above, we have that for $2\le i\le n-2$,
$K_{k}^{(n)}(2)\geq K_{k}^{(n)}(i)$ whenever 
\[
k\le\max\{\frac{n-i+2-\sqrt{n-i+2}}{2},\frac{i+2-\sqrt{i+2}}{2}\}.
\]
Taking minimization over $2\le i\le n-2$ to find the worst case,
we have that $K_{k}^{(n)}(2)\geq K_{k}^{(n)}(i)$ holds for $2\le i\le n-2$,
when 
\begin{equation}
k\le\min_{2\le i\le n-2}\max\{\frac{n-i+2-\sqrt{n-i+2}}{2},\frac{i+2-\sqrt{i+2}}{2}\}=\frac{1}{2}(\frac{n}{2}+2-\sqrt{\frac{n}{2}+2}).\label{eq:-23}
\end{equation}

Similarly, one can show $K_{k}^{(n)}(2)\geq-K_{k}^{(n)}(i)$ if $k$
satisfies \eqref{eq:-23}.

\section{\label{sec:Proof-of-Theorem}Proof of Theorem \ref{thm:LPB}}

 {One can easily observe that $\mathbf{x}=\mathbf{0}$
is a feasible solution to Problem \ref{prob:Dual-Problem:}, since
by Lemma \ref{lem:exactextreme}, $K_{r}^{(n-1)}(0)\ge K_{r}^{(n-1)}(i-1),\forall i\in[2:n],\forall r\in[0:n-1]$.
This solution leads to the lower bound $\overline{\Lambda}_{n}(\alpha,r)\geq0.$
In the following, we construct another two kinds of feasible solutions
for different cases: the } {\emph{$1$-sparse solution}} {{}
which contains only one non-zero component, and the } {\emph{$2$-sparse
solution}} {{} which contains two non-zero components.
By using these feasible solutions, we will show that $\overline{\Lambda}_{n}(\alpha,r)\geq\psi_{n}(\alpha,r)$.
We partition all the possible cases into four parts, according to
whether $r\le n/2-1$ and whether $r$ is even.}
\begin{itemize}
\item Even $r\le n/2-1$ 
\end{itemize}
For this case, we first construct a $2$-sparse feasible solution.
Let $k$ be an odd number such that $n-\tau(n)\le k\le n-1$. Consider
the vector $\mathbf{x}^{*}:=(0,...,0,x_{k}^{*},x_{k+1}^{*},0,...,0)$
with the $k$-th and $(k+1)$-th components $(x_{k}^{*},x_{k+1}^{*})$
satisfying 
\begin{align}
 & [K_{k}(2)-K_{k}(1)]x_{k}^{*}+[K_{k+1}(2)-K_{k+1}(1)]x_{k+1}^{*}+K_{r}^{(n-1)}(1)-K_{r}^{(n-1)}(0)=0\label{eq:-80-3}\\
 & [K_{k}(n)-K_{k}(1)]x_{k}^{*}+[K_{k+1}(n)-K_{k+1}(1)]x_{k+1}^{*}+K_{r}^{(n-1)}(n-1)-K_{r}^{(n-1)}(0)=0.\label{eq:-81-2}
\end{align}
That is, if we define 
\begin{align*}
\varphi(i) & :=[K_{k}(i)-K_{k}(1)]x_{k}^{*}+[K_{k+1}(i)-K_{k+1}(1)]x_{k+1}^{*}+K_{r}^{(n-1)}(i-1)-K_{r}^{(n-1)}(0),
\end{align*}
then $\varphi(2)=\varphi(n)=0.$ Solving the equations \eqref{eq:-80-3}
and \eqref{eq:-81-2}, we obtain 
\begin{align}
x_{k}^{*} & =x_{k+1}^{*}=\frac{n{n-2 \choose r-1}}{{n \choose k}(2k-n)}.\label{eq:-59}
\end{align}
We next prove that $\mathbf{x}^{*}$ is a feasible solution to Problem
\ref{prob:Dual-Problem:}. That is, for all $i\in[2:n]$, $\varphi(i)\le0.$
By the choice of $\mathbf{x}^{*}$, we have $\varphi(2)=\varphi(n)=0$.
Hence we only need to show $\varphi(i)\le0$ for all $i\in[3:n-1]$.
We next prove this.

By the property $K_{k}(i)=(-1)^{i}K_{n-k}(i),0\le i,k\le n$ \citep[(13)]{krasikov2001nonnegative},
we have 
\begin{align*}
\varphi(i) & =[(-1)^{i}K_{n-k}(i)+K_{n-k}(1)]x_{k}^{*}+[(-1)^{i}K_{n-k-1}(i)+K_{n-k-1}(1)]x_{k+1}^{*}\\
 & \qquad+K_{r}^{(n-1)}(i-1)-K_{r}^{(n-1)}(0).
\end{align*}
By Lemma \ref{lem:exactextreme}, for $i\in[3:n-2]$ and $n-\tau(n)\le k\le n-1$,
\begin{align*}
\varphi(i) & \leq[|K_{n-k}(i)|+K_{n-k}(1)]x_{k}^{*}+[|K_{n-k-1}(i)|+K_{n-k-1}(1)]x_{k+1}^{*}+K_{r}^{(n-1)}(i-1)-K_{r}^{(n-1)}(0)\\
 & \leq[K_{n-k}(2)+K_{n-k}(1)]x_{k}^{*}+[K_{n-k-1}(2)+K_{n-k-1}(1)]x_{k+1}^{*}+K_{r}^{(n-1)}(1)-K_{r}^{(n-1)}(0)\\
 & =\varphi(2)=0.
\end{align*}
Hence, it remains to verify that $\varphi(n-1)\leq0$.

By the property $K_{k}(i)=(-1)^{k}K_{k}(n-i),0\le i,k\le n$ \citep[(11)]{krasikov2001nonnegative},
we have 
\begin{align*}
\varphi(n-1) & =[(-1)^{n-1}K_{n-k}(n-1)+K_{n-k}(1)]x_{k}^{*}+[(-1)^{n-1}K_{n-k-1}(n-1)+K_{n-k-1}(1)]x_{k+1}^{*}\\
 & \qquad+K_{r}^{(n-1)}(n-2)-K_{r}^{(n-1)}(0)\\
 & =[(-1)^{n-1+n-k}K_{n-k}(1)+K_{n-k}(1)]x_{k}^{*}+[(-1)^{n-1+n-k-1}K_{n-k-1}(1)+K_{n-k-1}(1)]x_{k+1}^{*}\\
 & \qquad+K_{r}^{(n-1)}(1)-K_{r}^{(n-1)}(0)\\
 & =2K_{n-k}(1)x_{k}^{*}+K_{r}^{(n-1)}(1)-K_{r}^{(n-1)}(0)\\
 & =2(\frac{2k}{n}-1)\frac{n{n-2 \choose r-1}}{(2k-n)}+{n-1 \choose r}(1-\frac{2r}{n-1})-{n-1 \choose r}=0.
\end{align*}

Until now, we have shown that $\mathbf{x}^{*}$ is a feasible solution
to Problem \ref{prob:Dual-Problem:}. This immediately yields the
following bound on Problem \ref{prob:Primal-Problem:}: 
\begin{align}
\overline{\Lambda}_{n}(\alpha,r) & \geq-{n \choose k}[1+(1-\frac{2k}{n})(\frac{1}{\alpha}-1)]x_{k}^{*}-{n \choose k+1}[1+(1-\frac{2(k+1)}{n})(\frac{1}{\alpha}-1)]x_{k+1}^{*}\nonumber \\
 & =\frac{n{n-2 \choose r-1}}{2k-n}[2(\frac{1}{\alpha}-1)-\frac{1}{\alpha}\frac{n+1}{k+1}].\label{eq:-6}
\end{align}

We next construct a $1$-sparse feasible solution for odd $n$. Let
$k=n$ and $\mathbf{x}^{*}:=(0,...,0,x_{n}^{*})$, where $x_{n}^{*}={n-2 \choose r-1}$
which coincides with \eqref{eq:-59} with $k=n$. For this case, all
the derivations above for the $2$-sparse feasible solution still
hold since $K_{n+1}(i)=0$ for all $0\le i\le n$.

In conclusion, for the lower bound in \eqref{eq:-6}, $k$ is allowed
to be chosen as an odd number in $[n-\tau(n):n]$. We maximize this
lower bound over all such $k$'s and obtain the desired bound. 
\begin{itemize}
\item  {Odd $r\le n/2-1$ }
\end{itemize}
 {Let $k\in[n-\tau(n):n]\cap F$ be an integer where
$F$ is defined in \eqref{eq:F}. Consider the vector $\mathbf{x}^{*}:=(0,...,0,x_{k}^{*},0,...,0)$
with the $k$-th component $x_{k}^{*}$ satisfying 
\begin{align*}
 & [K_{k}(2)-K_{k}(1)]x_{k}^{*}+K_{r}^{(n-1)}(1)-K_{r}^{(n-1)}(0)=0.
\end{align*}
That is, $x_{k}^{*}=\frac{n(n-1){n-2 \choose r-1}}{{n \choose k}k(2k-n-1)}$.
For this case, we re-define 
\begin{align*}
\varphi(i) & :=[K_{k}(i)-K_{k}(1)]x_{k}^{*}+K_{r}^{(n-1)}(i-1)-K_{r}^{(n-1)}(0)\\
 & =[(-1)^{i}K_{n-k}(i)+K_{n-k}(1)]x_{k}^{*}+K_{r}^{(n-1)}(i-1)-K_{r}^{(n-1)}(0),
\end{align*}
which satisfies $\varphi(2)=0.$ We next show $\varphi(i)\le0$ for
all $i\in[3:n]$.}

 {Similarly to the case of even $r\le n/2-1$, for
$i\in[3:n-2]$ and $n-\tau(n)\le k\le n$, one can easily verify that
$\varphi(i)\le\varphi(2)=0$. We next verify that $\varphi(n-1),\varphi(n)\leq0$.
\begin{align*}
\varphi(n-1) & =[(-1)^{n-1}K_{n-k}(n-1)+K_{n-k}(1)]x_{k}^{*}+K_{r}^{(n-1)}(n-2)-K_{r}^{(n-1)}(0)\\
 & =[(-1)^{k+1}K_{n-k}(1)+K_{n-k}(1)]x_{k}^{*}-K_{r}^{(n-1)}(1)-K_{r}^{(n-1)}(0).
\end{align*}
If $k$ is even, then obviously, $\varphi(n-1)\le0$. Otherwise, 
\begin{align*}
\varphi(n-1) & =2K_{n-k}(1)x_{k}^{*}-K_{r}^{(n-1)}(1)-K_{r}^{(n-1)}(0)\\
 & =2{n-2 \choose r-1}[(\frac{2k}{n}-1)\frac{n(n-1)}{k(2k-n-1)}-\frac{n-1}{r}+1]
\end{align*}
which is non-positive if $k>\frac{n+1}{2}$ and 
\begin{equation}
k\ge\frac{2(n-1)+s(n+1)+\sqrt{(2(n-1)-s(n+1))^{2}+8s(n-1)}}{4s}\label{eq:-7}
\end{equation}
with $s=\frac{n-1}{r}-1$. By the inequality $\sqrt{a^{2}+b^{2}}\le a+b$
for $a,b\ge0$, \eqref{eq:-7} is satisfied if 
\begin{align*}
k & \ge\max\{\frac{n+1}{2},\frac{n-1}{s}\}+\sqrt{\frac{n+1}{2s}}=\max\{\frac{n+1}{2},\frac{(n-1)r}{n-1-r}\}+\sqrt{\frac{(n+1)r}{2(n-1-r)}}.
\end{align*}
}

 {Similarly to the case of even $r\le n/2-1$, for
$\varphi(n)$, we have 
\begin{align*}
\varphi(n) & =[(-1)^{n}K_{n-k}(n)+K_{n-k}(1)]x_{k}^{*}+K_{r}^{(n-1)}(n-1)-K_{r}^{(n-1)}(0)\\
 & =[(-1)^{k}K_{n-k}(0)+K_{n-k}(1)]x_{k}^{*}-2K_{r}^{(n-1)}(0).
\end{align*}
If $k$ is odd, then obviously, $\varphi(n)\le0$. Otherwise, 
\begin{align*}
\varphi(n) & =[K_{n-k}(0)+K_{n-k}(1)]x_{k}^{*}-2K_{r}^{(n-1)}(0)\\
 & =2(n-1){n-2 \choose r-1}(\frac{1}{2k-n-1}-\frac{1}{r})
\end{align*}
which is non-positive if $k\ge\frac{n+r+1}{2}$.}

 {Therefore, the solution constructed above is feasible
if $k\in[n-\tau(n):n]\cap F$. This immediately yields the following
bound on Problem \ref{prob:Primal-Problem:}: 
\[
\overline{\Lambda}_{n}(\alpha,r)\geq\frac{(n-1){n-2 \choose r-1}}{2k-n-1}[2(\frac{1}{\alpha}-1)-\frac{1}{\alpha}\frac{n}{k}].
\]
Maximizing this lower bound over all $k\in[n-\tau(n):n]\cap F$ yields
the desired lower bound.}
\begin{itemize}
\item Odd $r>n/2-1$ 
\end{itemize}
For odd $r>n/2-1$, consider the vector $\mathbf{x}^{*}:=(0,...,0,x_{k}^{*},0,...,0)$
with the $k$-th component $x_{k}^{*}$ satisfying 
\begin{align*}
 & 2K_{n-k}(1)x_{k}^{*}+K_{n-1-r}^{(n-1)}(1)-K_{n-1-r}^{(n-1)}(0)=0,
\end{align*}
i.e., $x_{k}^{*}=\frac{n(n-1){n-2 \choose r-1}}{{n \choose k}k(2k-n-1)}$,
where $k\ge\frac{n}{2}+1$ is odd. Re-define 
\begin{align*}
\varphi(i) & :=[K_{k}(i)-K_{k}(1)]x_{k}^{*}+K_{r}^{(n-1)}(i-1)-K_{r}^{(n-1)}(0)\\
 & =[(-1)^{i+n-k}K_{n-k}(n-i)+K_{n-k}(1)]x_{k}^{*}+(-1)^{i+n-r}K_{n-1-r}^{(n-1)}(n-i)-K_{n-1-r}^{(n-1)}(0),
\end{align*}
which satisfies $\varphi(n-1)=0$. Then for odd $k$, it holds that
for all $i\in[2:n]$, 
\begin{align*}
\varphi(i) & \leq[K_{n-k}(1)+K_{n-k}(1)]x_{k}^{*}+K_{n-1-r}^{(n-1)}(1)-K_{n-1-r}^{(n-1)}(0)\\
 & =\varphi(n-1)=0.
\end{align*}
Hence, the vector $\mathbf{x}^{*}$ is feasible and leads to the desired
bound for this case.
\begin{itemize}
\item Even $r>n/2-1$ 
\end{itemize}
For even $r>n/2-1$, by Lemma \ref{lem:exactextreme}, it holds that
$K_{r+1}(1)\ge|K_{r+1}(i)|$ for all $i\in[2:n]$. This inequality
implies that $-(\omega_{1}^{(r)}-\omega_{i}^{(r)})\le-(\omega_{1}^{(r+1)}-\omega_{i}^{(r+1)})$
where $\omega_{i}^{(r)}$ is defined in \eqref{eq:w}. Hence, inequality
$\overline{\Lambda}_{n}(\alpha,r)\geq\overline{\Lambda}_{n}(\alpha,r+1)$
holds, which, combined with the lower bound for the case ``odd $r>n/2-1$'',
implies the desired lower bound for even $r>n/2-1$.

\section{\label{sec:Proof-of-Lemma-2}Proof of Lemma \ref{lem:sphere-ball-ratio}}

Statement 1 is obvious. Statement 3 follows from Statements 1 and
2. Hence it suffices to prove Statement 2. Next we do this. On one
hand, 
\begin{align}
\frac{{n \choose \le r}}{{n \choose r}}=\frac{\sum_{k=0}^{r}{n \choose r-k}}{{n \choose r}} & =\sum_{k=0}^{r}\frac{r...(r-k+1)}{(n-r+k)...(n-r+1)}\nonumber \\
 & =\sum_{k=0}^{r}\frac{\beta...(\beta-\frac{k-1}{n})}{(1-\beta+\frac{k}{n})...(1-\beta+\frac{1}{n})}\leq\sum_{k=0}^{r}(\frac{\beta}{1-\beta})^{k}.\label{eq:-105}
\end{align}
On the other hand, for a fixed $N$ and for any $r/n\in[0,\frac{1}{2}-\delta]$,
\begin{align*}
\frac{{n \choose \le r}}{{n \choose r}}-\sum_{k=0}^{r}(\frac{\beta}{1-\beta})^{k} & \geq\sum_{k=0}^{\min\{r,N\}}\frac{\beta...(\beta-\frac{k-1}{n})}{(1-\beta+\frac{k}{n})...(1-\beta+\frac{1}{n})}-\sum_{k=0}^{r}(\frac{\beta}{1-\beta})^{k}\\
 & \geq\sum_{k=0}^{\min\{r,N\}}[(\frac{\beta-\frac{k}{n}}{1-\beta+\frac{k}{n}})^{k}-(\frac{\beta}{1-\beta})^{k}]-\sum_{k=\min\{r,N\}+1}^{r}(\frac{\beta}{1-\beta})^{k}
\end{align*}
Therefore, for fixed $N$, 
\[
\liminf_{n\to\infty}\inf_{r/n\in[0,\frac{1}{2}-\delta]}\{\frac{{n \choose \le r}}{{n \choose r}}-\sum_{k=0}^{r}(\frac{\beta}{1-\beta})^{k}\}\geq\liminf_{n\to\infty}\inf_{\beta\in[0,\frac{1}{2}-\delta]}-\sum_{k=\min\{r,N\}+1}^{r}(\frac{\beta}{1-\beta})^{k}\geq-\frac{\gamma^{N+1}}{1-\gamma}.
\]
where $\gamma:=\frac{\frac{1}{2}-\delta}{\frac{1}{2}+\delta}<1$.
Since $N$ is arbitrary, 
\begin{equation}
\liminf_{n\to\infty}\inf_{r/n\in[0,\frac{1}{2}-\delta]}\{\frac{{n \choose \le r}}{{n \choose r}}-\sum_{k=0}^{r}(\frac{\beta}{1-\beta})^{k}\}\geq0.\label{eq:-106}
\end{equation}
Combining \eqref{eq:-105} and \eqref{eq:-106} yields that given
$\delta>0$, $\frac{{n \choose \le r}}{{n \choose r}}\to\sum_{k=0}^{r}(\frac{\beta}{1-\beta})^{k}$
uniformly for all $\beta:=r/n\in[0,\frac{1}{2}-\delta]$.

\section*{Acknowledgments}
We thank the anonymous reviewer for his/her careful review  which have helped to greatly improve the paper. 

\bibliographystyle{abbrvnat}
\bibliography{ref}

\end{document}